\documentclass[10pt,a4paper]{amsart}
\usepackage[T1]{fontenc}
\usepackage{lmodern,amsmath,amssymb,amsthm}
\usepackage[a4paper,textwidth=16.1cm,textheight=23cm,centering]{geometry}
\usepackage{enumitem,longtable,hyperref}
\hypersetup{hidelinks,pdftitle={On the number of zero-sum subsequences in cyclic groups}}
\theoremstyle{plain}
\newtheorem{theorem}{Theorem}[section]
\newtheorem{proposition}[theorem]{Proposition}
\newtheorem{lemma}[theorem]{Lemma}

\newtheorem{conjecture}[theorem]{Conjecture}
\theoremstyle{definition}

\newcommand{\N}{\mathbb N}
\newcommand{\Z}{\mathbb Z}

\newcommand{\Fcal}{\mathcal F}

\DeclareMathOperator{\supp}{supp}
\numberwithin{equation}{section}
\setlist[enumerate]{label=\textup{(\arabic*)},leftmargin=*}
\title[Zero-sum subsequence counts]{Enumeration of zero-sum subsequences: A proof of Bialostocki-Lotspeich conjecture}
\author{Guoqing Wang}
\address{School of Mathematical Sciences \\ Tiangong University \\ Tianjin 300387, P.R. China}
\email{gqwang1979@aliyun.com}
\author{Xiangneng Zeng}
\address{Sino-French Institute of Nuclear Engineering and Technology,
Sun Yat-Sen University, Guangzhou 510275, P.R. China}
\email{zengxneng@mail.sysu.edu.cn}
\date{}
\keywords{Bialostocki-Lotspeich conjecture; Zero-sum sequences; Enumeration of subsequences; Erd\H{o}s--Ginzburg--Ziv theorem}
\begin{document}
\begin{abstract}
Let $C_n$ be a cyclic group of order $n\ge 2$, and let
$S=g_1\cdot\ldots\cdot g_{\ell}$ be a sequence over $C_n$, with repetitions
allowed. We denote by $\mathsf N_0^n(S)$ the number of subsets
$I\subseteq[1,\ell]$ with $|I|=n$ and $\sum_{i\in I}g_i=0$.
A conjecture proposed by Bialostocki and Lotspeich \cite[Conjecture 5.4]{BialostockiLotspeich1992}
in 1992 states that
$$\mathsf N_0^n(S)\ge
\binom{\lfloor |S|/2\rfloor}{n}+
\binom{\lceil |S|/2\rceil}{n}.$$
Several partial results toward this conjecture are known. In this paper, we prove this conjecture completely.
\end{abstract}

\maketitle

\section{Introduction}
\label{sec:introduction}

Let $G$ be an additive finite abelian group, and let $\mathcal F(G)$
denote the free abelian monoid with basis $G$. Its elements are called
\emph{sequences} over $G$. A sequence has repetitions but no prescribed
ordering. For $S=g_1\cdot\ldots\cdot g_{\ell}\in\mathcal F(G)$,
we write $|S|=\ell$ and $\sigma(S)=g_1+\cdots+g_{\ell}$.
The classical theorem of Erd\H{o}s, Ginzburg, and Ziv
\cite{ErdosGinzburgZiv1961} states that every sequence of length $2n-1$
over a cyclic group $C_n$ contains a zero-sum subsequence of length $n$.

For longer sequences, the corresponding counting problem asks how many
such subsequences must occur. Following
\cite[Definition~8.1]{GaoGeroldinger2006}, we set
\begin{equation}\label{eq:intro-count}
 \mathsf N_0^n(S)=
 \left|\left\{I\subseteq[1,|S|]:
 |I|=n,\ \sum_{i\in I}g_i=0\right\}\right|.
\end{equation}
Thus subsequences are counted with the multiplicity of their appearance
in $S$.

Bialostocki \cite{Bialostocki1989} posed the following conjecture in a research proposal
from 1989; its published formulation is in his joint paper with M. Lotspeich
\cite[Conjecture~5.4]{BialostockiLotspeich1992}.
Caro restates this conjecture in
\cite[Conjecture~2.3]{Caro1996}. Gao and Geroldinger \cite[Conjecture~8.8]{GaoGeroldinger2006} attributed
the conjecture to Bialostocki and Lotspeich and stated it as follows.

\begin{conjecture}[Bialostocki--Lotspeich]
\label{con:BiaLot}
 Let $G$ be cyclic group of order $n\ge2$, and let $S\in\mathcal F(G)$.
 Then $$\mathsf N_0^n(S)\ge \binom{\lfloor |S|/2\rfloor}{n}
  +\binom{\lceil |S|/2\rceil}{n}.$$
\end{conjecture}

Several substantial cases were established during the 1990s.
F\"uredi and Kleitman \cite{FurediKleitman1993} proved the conjecture
for products of two distinct primes and for sufficiently long sequences.
The explicit long-sequence range recorded in
\cite[Theorem~8.9]{GaoGeroldinger2006} is $|S|\ge n^{6n}$.
Kisin \cite[Theorem~8]{Kisin1994} established the conjecture for
$n=p^a$ and $n=p^a q$, where $p,q$ are distinct primes and $a\ge1$,
without a restriction on $|S|$.
Gao \cite{Gao1999} treated the range $|S|<5n/2$ and obtained
structural information when there are few zero-sum subsequences of length
$n$. In particular, when $|S|=2n-1$, either
$\mathsf N_0^n(S)\ge n+1$ or
$S=a^n b^{n-1}$ with $\operatorname{ord}(a-b)=n$.

Grynkiewicz \cite{Grynkiewicz2006} made substantial progress
toward the conjecture. He proved that, for every finite
abelian group $G$ of order $n\ge30$ and every sequence
$S\in\mathcal F(G)$ with $|S|\ge2n-1$,
\begin{equation}\label{eq:intro-grynkiewicz}
 \mathsf N_0^n(S)\ge
\min\left\{
\binom{\left\lfloor |S|/2\right\rfloor}{n}
+\binom{\left\lceil |S|/2\right\rceil}{n}, \
\binom{|S|-n}{\left\lceil\frac{2n-1}{3}\right\rceil}
\right\}.
\end{equation}
More recently, M\'oricz and Nagy
\cite[Conjecture~1.4]{MoriczNagy2026} also stated this
conjecture in their study of extremal zero-sum counts.
Their main problem asks for the maximum number of zero-sum
subsequences in sequences of nonzero integers, whereas the
present paper determines the minimum number of zero-sum
subsequences of length $n$ in sequences over $C_n$.

In this paper, we prove Conjecture \ref{con:BiaLot} completely, and our result is as follows.

\begin{theorem}\label{thm:main}
 Let $n\ge2$, let $G$ be a cyclic group of order $n$, and let
 $S\in\mathcal F(G)$. Then
 \[
  \mathsf N_0^n(S)\ge
  \binom{\lfloor |S|/2\rfloor}{n}
  +\binom{\lceil |S|/2\rceil}{n}.
 \]
\end{theorem}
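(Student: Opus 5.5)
The plan is an averaging induction on $\ell=|S|$, reduced to sequences of odd length. Write $f(\ell)=\binom{\lfloor \ell/2\rfloor}{n}+\binom{\lceil \ell/2\rceil}{n}$. Every $n$-subset $I$ of $[1,\ell]$ avoids exactly $\ell-n$ indices, so double counting gives
\[
 (\ell-n)\,\mathsf N_0^n(S)=\sum_{j=1}^{\ell}\mathsf N_0^n\bigl(S g_j^{-1}\bigr).
\]
Hence $\mathsf N_0^n(S)\ge \frac{\ell}{\ell-n}\min_j \mathsf N_0^n(Sg_j^{-1})$, and induction on $\ell$ works whenever $\ell f(\ell-1)\ge(\ell-n)f(\ell)$.

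For $\ell=2k$ this holds with equality. Indeed $\binom{k-1}{n}=\frac{k-n}{k}\binom kn$, so
\[
 2k\Bigl(\binom kn+\binom{k-1}n\Bigr)=(4k-2n)\binom kn .
\]
Thus the even case follows from the odd case $2k-1$. For $\ell=2k+1$ the inequality fails: the comparison reduces to $n-n^2<0$. So the whole difficulty is the odd lengths $\ell=2k+1\ge 2n-1$, with base case $\ell=2n-1$. There $f=1$, and this is exactly the Erd\H{o}s--Ginzburg--Ziv theorem. Lengths $\ell<2n-1$ are trivial, since $f(\ell)=0$.

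For odd $\ell$ I would strengthen the induction hypothesis so that it is stable under deletion. The averaging step loses only a factor $1-\frac{n(n-1)}{\cdots}$, so it suffices to show the following. If $|S|=2k$ and $\mathsf N_0^n(S)$ is close to $f(2k)=2\binom kn$, then $S$ is close to the extremal shape $a^{k}b^{k}$ with $\operatorname{ord}(a-b)=n$ (up to affine maps $x\mapsto ux+c$, which preserve $\mathsf N_0^n$ up to a translate that is harmless modulo $n$ since $|I|=n$). Moreover, deleting one element of such an $S$ and then adding one element can only produce the count $\binom kn+\binom{k+1}n$ or more. Concretely, I would prove a weighted statement: for $|S|$ odd,
\[
 \mathsf N_0^n(S)\ge \min_j\Bigl(\mathsf N_0^n(Sg_j^{-1})+\binom{k}{n-1}\Bigr).
\]
That is, some extra block of zero-sum $n$-sets must use the element $g_j$ that was removed. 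These sets are counted as zero-sum $(n-1)$-subsequences of $Sg_j^{-1}$ with sum $-g_j$, and the $\binom{k}{n-1}$ should come from the majority value class.

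To get that extra count I would split by multiplicities. If some value has multiplicity at least $k+1$, count directly: these copies together with $n$-subsets drawn from them and from the other elements give the required binomials. Otherwise, apply EGZ-type arguments (a Cauchy--Davenport or Kneser sumset bound on $\Sigma_{n-1}$) to show that every residue, in particular $-g_j$ for a suitable $j$, is represented by at least $\binom{k}{n-1}$ subsequences of length $n-1$. For short sequences, $|S|<5n/2$, I would instead invoke Gao's structural result; for $n\ge 30$ I would invoke Grynkiewicz's bound \eqref{eq:intro-grynkiewicz} in the range where its minimum equals $f$, leaving small $n$ and the middle range to the counting argument.

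The main obstacle is the odd-length step: proving the $(n-1)$-subsequence lower bound uniformly when $S$ has no dominant value but is also far from having many zero-sums. My first attempt there would be a compression or shifting argument. Repeatedly replace two values by values that move the sequence toward a two-valued sequence, showing that $\mathsf N_0^n$ does not increase, until one reaches $a^{k}b^{k+1}$, where the count equals $f(2k+1)$ exactly.
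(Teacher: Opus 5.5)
There is a genuine gap: the odd-length step, which carries the whole difficulty, is not proved.

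\textbf{What is correct.} The identity $(\ell-n)\mathsf N_0^n(S)=\sum_j\mathsf N_0^n(Sg_j^{-1})$ is exact. For $\ell=2k$ it transfers the bound from length $2k-1$ with no loss. The base case $\ell=2n-1$ is Erd\H{o}s--Ginzburg--Ziv. The paper never inducts on $|S|$, so this even-length reduction is a neat extra observation.

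\textbf{The odd step.} Note that $\mathsf N_0^n(S)-\mathsf N_0^n(Sg_j^{-1})$ is the number of zero-sum $n$-sets through position $j$. So your weighted inequality says exactly this: some position of a sequence of length $2k+1$ lies in at least $\binom{k}{n-1}$ zero-sum $n$-sets. These degrees sum to $n\mathsf N_0^n(S)$. At the conjectured minimum their average is only $\binom{k}{n-1}\frac{2k+2-n}{2k+1}<\binom{k}{n-1}$, and the inequality is tight for $0^{k+1}1^k$. So the odd step is at least as strong as the theorem itself: together with your even step it implies the theorem, while averaging the theorem does not give it back. Your dominant-value case (multiplicity $\ge k+1$) is the easy part, essentially Lemma~\ref{lem:majority}.

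\textbf{Why the named tools do not reach the remaining case.}
\begin{itemize}
\item Cauchy--Davenport and Kneser bound the size of a sumset. They give one representation of a residue, not $\binom{k}{n-1}$ of them.
\item Your intermediate claim that \emph{every} residue is represented is false. For even $n\ge6$, take $S=0^k2^k4$: no value has multiplicity $k+1$, yet every $Sg_j^{-1}$ lies in $2C_n$, so odd residues have no representation at all.
\item Turning Kneser into a count with the exact constant is what Proposition~\ref{prop:pairing} does. It uses a matching into $N$ pairs, Kneser applied to $\{0,d_1\}+\cdots+\{0,d_r\}$, and the extra representations of Lemma~\ref{lem:representations}. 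The failure probability (below $\rho/3$) is then absorbed by the surplus on the nongenerator event.
\item That argument only works when no class modulo a prime $q\mid n$ exceeds $(\frac12+\frac{41}{100q})\ell$, and when $n\ge385$ has all prime factors at least $5$.
\item The complementary cases are settled by induction on the group order. The paper lifts $\mathcal C(n/q)$ through pairs, triples or blocks (Lemmas~\ref{lem:doubling}, \ref{lem:tripling}, \ref{lem:large-class}), together with Theorem~\ref{thm:kisin}. Your induction on $\ell$ with $n$ fixed has no access to $\mathcal C(n/q)$, and nothing in your plan replaces these steps.
\end{itemize}

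\textbf{Other unsupported steps.}
\begin{itemize}
\item The compression idea has no known monotonicity in $C_n$. Changing a value can create as well as destroy zero-sum $n$-sets, and showing that $\mathsf N_0^n$ decreases towards $a^kb^{k+1}$ is as hard as the conjecture.
\item The stability statement you invoke (a count close to $f(2k)$ forces a shape close to $a^kb^k$) is also unproved.
\item Gao's and Grynkiewicz's results only cover $\ell<5n/2$ and $\ell\le19n/3$ (the latter for $n\ge30$). Long sequences therefore remain uncovered for every $n$.
\end{itemize}
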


The proof proceeds by strong induction on $n$. After recalling
Kisin's theorem and Grynkiewicz's lower bound, we establish
binomial estimates for short sequences and reductions for even
moduli, large residue classes, and moduli divisible by $3$.
The remaining case is handled by a pairing argument using
Kneser's theorem and a representation-counting lemma. The final
section assembles these ingredients to complete the proof.

\section{Preliminaries}
\label{sec:preliminaries}

We follow the notation of \cite{GaoGeroldinger2006}. We denote by
$\N$ the set of positive integers and put $\N_0=\N\cup\{0\}$.
For real numbers $a\le b$, we write
$[a,b]=\{z\in\Z:a\le z\le b\}$. All groups are written additively.
The cyclic group of order $n$ is denoted by $C_n$.

Let $G$ be a finite abelian group. We set $G^\bullet=G\setminus\{0\}$
and denote by $\mathcal F(G)$ the free abelian monoid with basis $G$.
Its elements are called \emph{sequences over $G$}. A sequence can be
written in the form
\[
 S=g_1\cdot\ldots\cdot g_\ell
   =\prod_{g\in G}g^{\mathsf v_g(S)}\in\mathcal F(G).
\]
Here $\mathsf v_g(S)\in\N_0$ is the multiplicity of $g$ in $S$,
$|S|=\ell$ is the length of $S$, and
\[
 \sigma(S)=\sum_{i=1}^\ell g_i,\qquad
 \supp(S)=\{g\in G:\mathsf v_g(S)>0\},\qquad
 \mathsf h(S)=\max\{\mathsf v_g(S):g\in G\}.
\]
We put $\mathsf h(1)=0$ for the empty sequence $1$. A sequence $T$
is a subsequence of $S$ if $T\mid S$ in $\mathcal F(G)$. It is a
\emph{zero-sum sequence} if $\sigma(T)=0$, and $S$ is
\emph{zero-sum free} if it has no nonempty zero-sum subsequence.
We use
\[
 \Sigma(S)=\{\sigma(T):1\ne T\mid S\},\qquad
 \Sigma^*(S)=\Sigma(S)\cup\{0\}.
\]
For a map $\varphi:G\to H$, we write
$\varphi(S)=\varphi(g_1)\cdot\ldots\cdot\varphi(g_\ell)$.
In particular, $-S=(-g_1)\cdot\ldots\cdot(-g_\ell)$.

For $g\in G$ and $k\in\N_0$, we define
\begin{equation}
 \mathsf N_g^k(S)=
 \left|\left\{I\subseteq[1,\ell]: |I|=k,
                         \ \sum_{i\in I}g_i=g\right\}\right|,
 \qquad
 \mathsf N_g(S)=\sum_{k\ge0}\mathsf N_g^k(S).
 \label{eq:positional-count}
\end{equation}
Thus subsequences are counted with the multiplicity of their
appearance in $S$, as in \cite[Definition 8.1]{GaoGeroldinger2006}.
The empty subsequence is included, so $\mathsf N_0^0(S)=1$.
Whenever we partition or select terms of a sequence, we mean its
indexed positions, even if several terms are equal.
We denote
\begin{equation}\label{eq:intro-B}
 B(\ell,n)=\binom{\lfloor\ell/2\rfloor}{n}
             +\binom{\lceil\ell/2\rceil}{n}.
\end{equation}
The assertion that $\mathsf N_0^n(S)\ge B(|S|,n)$ for every
$S\in\mathcal F(C_n)$ is denoted by $\mathcal C(n)$.
This notation is an assertion about a group order, not about a
particular sequence length.

If $G=C_n$ and $q\mid n$, then $qG=\{qg:g\in G\}$ is the subgroup
of index $q$. We denote the quotient map by $\pi_q:G\to G/qG$.
After choosing a generator, its fibers are precisely the residue
classes modulo $q$. An element of $C_n$ fails to generate $C_n$ if
and only if it lies in $qG$ for some prime $q\mid n$.

We recall some known counting results used in the proof.

\begin{theorem}[Kisin \cite{Kisin1994}]
\label{thm:kisin}
The assertion $\mathcal C(n)$ holds if $n$ is a prime power or
$n=p^a q$, where $p,q$ are distinct primes and $a\in\N$.
\end{theorem}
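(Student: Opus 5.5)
Since this is Kisin's theorem, quoted from \cite{Kisin1994}, I will not reconstruct his argument. I would instead aim for a self-contained route: induction on $\ell=|S|$, combining an averaging identity with an arithmetic congruence that recovers the rounding loss.

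\emph{Averaging and base case.} Each $n$-subset $I$ with $\sum_{i\in I}g_i=0$ survives the deletion of exactly $\ell-n$ of the $\ell$ positions. Hence
\[
 \sum_{i=1}^{\ell}\mathsf N_0^n(S g_i^{-1})=(\ell-n)\,\mathsf N_0^n(S),
 \qquad\text{so}\qquad
 \mathsf N_0^n(S)\ge \frac{\ell}{\ell-n}\min_i \mathsf N_0^n(Sg_i^{-1}).
\]
For $\ell\le 2n-2$ the claim is trivial, since $B(\ell,n)=0$. For $\ell=2n-1$ we have $B=1$, which is the Erd\H{o}s--Ginzburg--Ziv theorem.

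\emph{Even steps.} For $\ell=2m$, a direct computation shows that the averaging bound gives exactly
\[
 \frac{2m}{2m-n}B(2m-1,n)=2\binom{m}{n}=B(2m,n),
\]
so these steps are lossless.

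\emph{Odd steps.} For $\ell=2m+1$ the averaging bound falls short of $B(2m+1,n)=\binom mn+\binom{m+1}n$. Cross-multiplying, the relative deficit is $(n^2-n)$ divided by $2(2m+1)(m+1-n)$, which is strictly less than $1$ once $\binom mn$ is normalised. So the averaging bound is always within an integer gap that a congruence could close.

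\emph{Closing the gap when $n=p^a$.} I would first try to prove a congruence of the form
\[
 \mathsf N_0^n(S)\equiv B(\ell,n)\pmod p,
\]
or a suitable refinement modulo a larger power of $p$. The natural tool is the Chevalley--Warning method, or Olson's $p$-group version. Write $\mathsf N_0^n(S)$ as the number of $0/1$ solutions of
\[
 \sum x_i=n,\qquad \sum x_ig_i=0 \text{ in } C_{p^a},
\]
encoded via the polynomial $\binom{\sum x_i}{p^j}$ conditions. Lucas' theorem then reduces the count modulo $p$ to a binomial expression depending only on $\ell$. Combining this congruence with the fractional averaging bound should force $\mathsf N_0^n(S)\ge B(\ell,n)$ at odd $\ell$.

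\emph{Extension to $n=p^aq$.} I would project via $\pi_q$ onto $C_q$ and apply the prime case there to obtain many blocks of length $q$ with sum in $qG\cong C_{p^a}$. I would then apply the prime-power case to the sequence of block sums, together with a multiplicity count of how many ways each block system arises.

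I expect the main obstacle to be this step. The block decomposition overcounts or undercounts in a way that must exactly match $B(\ell,n)$, and the congruence modulo $p$ alone need not close the odd-step gap when $q\nmid$ the relevant binomials. Here I would look for a simultaneous congruence modulo $p$ and modulo $q$, and try to combine them.
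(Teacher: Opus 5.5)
The paper gives no proof of this statement. It is quoted from Kisin and used as a black box, so your self-contained route has to stand on its own, and as written it does not.

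Its central step is false. The residue of $\mathsf N_0^n(S)$ modulo $p$ is not determined by $\ell$. Hence no congruence $\mathsf N_0^n(S)\equiv B(\ell,n)$ holds modulo $p$ or any power of $p$, and neither does any congruence whose right-hand side depends only on $\ell$. For $n=p=3$ and $\ell=9$, the sequence $0^9$ has $\mathsf N_0^3=\binom93=84\equiv0\pmod 3$. The sequence $0^51^4$ has $\mathsf N_0^3=\binom53+\binom43=14=B(9,3)\equiv2\pmod 3$. Chevalley--Warning does not isolate one length. For odd $p$ and $\ell\ge2p-1$ it only gives $\sum_{j\ge0}(-1)^j\mathsf N_0^{jp}(S)\equiv0\pmod p$, which mixes all lengths divisible by $p$; for $0^9$ it reads $1-84+84-1=0$.

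Your averaging identity, the base case and the lossless even step are correct. Even a valid congruence could not rescue the odd step, however, because the deficit is not bounded by any fixed modulus. From $B(2m,n)=2\binom mn$ the averaging bound is $\frac{2(2m+1)}{2m+1-n}\binom mn$, and
\[
 B(2m+1,n)-\frac{2(2m+1)}{2m+1-n}\binom mn
 =\frac{n(n-1)}{(m+1-n)(2m+1-n)}\binom mn .
\]
This is unbounded in $m$. For $n=3$ it equals $m/2$: at $\ell=21$ the averaging bound is $280$ while $B(21,3)=285$. For $n=5$ and $\ell=21$ it is $52.5$. Your \emph{relative} deficit is measured after dividing by $\binom mn$, so it says nothing about the absolute gap. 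It is not even below $1$ when $m=n\ge6$.

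The loss is structural. Take the extremal sequence $a^{m+1}b^m$ with $\operatorname{ord}(a-b)=n$. Each of the $m$ deletions of a $b$ leaves $a^{m+1}b^{m-1}$, whose count exceeds $B(2m,n)$ by $\binom{m-1}{n-2}$. Replacing every summand by the minimum therefore discards exactly $m\binom{m-1}{n-2}/(2m+1-n)$, which is the deficit above. Repairing the odd step would need a stability statement: the one-term deletions of $S$ cannot all be near-extremal. No length-only congruence supplies that.

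The $p^aq$ step is also left open. Zero-sum $q$-subsets of $\pi_q(S)$ overlap, so disjoint blocks must be chosen. The probability that a given zero-sum $n$-set is a union of blocks then depends on its residue profile. Controlling that loss is exactly what Lemma~\ref{red:matching} does for $q=2$ and what Section~\ref{sec:tripling} does for $q=3$ at length. Lemma~\ref{lem:large-class} does it for $q\ge5$ only under a large-class hypothesis. Your sketch does not address this loss.
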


\begin{theorem}[Grynkiewicz \cite{Grynkiewicz2006}]
\label{thm:grynkiewicz}
Let $G$ be a finite abelian group of order $n\ge30$ and let
$S\in\mathcal F(G)$ have length $\ell\ge n$. Then
\[
 \mathsf N_0^n(S)\ge
 \min\left\{B(\ell,n),
             \binom{\ell-n}{\lceil(2n-1)/3\rceil}\right\}.
\]
In particular, $\mathsf N_0^n(S)\ge B(\ell,n)$ if
$\ell\le19n/3$.
\end{theorem}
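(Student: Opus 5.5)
Since this is Grynkiewicz's theorem, which the paper cites and uses as a black box, the following is a sketch of how I would reconstruct it. The aim is to show that either $S$ is close to the extremal structure $a^{\lceil\ell/2\rceil}b^{\lfloor\ell/2\rfloor}$, which forces the $B(\ell,n)$ bound, or else $S$ is spread out enough to allow an injective construction of $\binom{\ell-n}{k}$ zero-sum $n$-subsequences, where $k=\lceil(2n-1)/3\rceil$. The dividing parameter is the multiplicity profile of $S$, mainly $\mathsf h(S)$ and the size of the two largest multiplicities.

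\textbf{Injective construction.} Since $\mathsf N_0^n$ is invariant under translating every term by a fixed $g$ (an $n$-term sum changes by $ng=0$), we may normalise freely.
\begin{enumerate}
\item Split the index set into a block $Y$ of $n$ positions and the remaining $\ell-n$ positions $X$. Choose $Y$ so that its terms have small multiplicity and are spread over many residues.
\item For each $k$-subset $K\subseteq X$, look for an $(n-k)$-subset $J\subseteq Y$ with $\sigma(J)=-\sigma(K)$. Then $Z_K=K\cup J$ is a zero-sum subsequence of length $n$.
\item Since $Z_K\cap X=K$, the map $K\mapsto Z_K$ is injective. This gives $\mathsf N_0^n(S)\ge\binom{\ell-n}{k}$.
\end{enumerate}
Everything therefore reduces to a covering statement: the set $\Sigma_{n-k}(Y)$ of sums of $(n-k)$-term subsequences of $Y$ must equal $G$. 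Here $n-k\approx n/3$.

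\textbf{Covering step (main obstacle).} I would derive this from the DeVos--Goddyn--Mohar partition theorem, a Kneser-type result. It says that if a sequence can be split into $m$ sets $A_1,\dots,A_m$, then $\left|\sum A_i\right|$ is large unless there is a stabiliser $H$ with large multiplicity along some coset. Applied to an $(n-k)$-set partition of $Y$, this gives $\Sigma_{n-k}(Y)=G$ unless a proper subgroup $H$ (or a coset of it) captures roughly $2n/3$ terms of $Y$. The exponent $\lceil(2n-1)/3\rceil$ is exactly the threshold at which this partition inequality closes. Getting the constants right is also where the hypothesis $n\ge30$ should enter, to absorb small-order exceptions.

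\textbf{Degenerate case.} Suppose $Y$ cannot be chosen this way for any choice of $n$ positions. Then almost all of $S$ lies in at most two cosets of some $H$, or in two values $a,b$. In that case, pass to $G/H$ and to $H$ and count directly. Zero-sum $n$-subsequences built from one large class, or balanced between two classes, number at least $\binom{\lfloor\ell/2\rfloor}{n}+\binom{\lceil\ell/2\rceil}{n}$, with equality only for $a^{\lceil\ell/2\rceil}b^{\lfloor\ell/2\rfloor}$ where $\operatorname{ord}(a-b)=n$. Combining the two cases yields the displayed minimum.

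\textbf{The "in particular" clause.} This reduces to the elementary estimate $\binom{\ell-n}{\lceil(2n-1)/3\rceil}\ge B(\ell,n)$ for $n\le\ell\le 19n/3$, checked via ratios of consecutive binomials. For $n\le\ell<2n$ one has $B(\ell,n)=0$, so there is nothing to check there.
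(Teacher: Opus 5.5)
\textbf{There is a genuine gap in the covering step.} The paper gives no proof of this statement. It quotes it from Grynkiewicz and uses it as a black box; only the comparison behind the ``in particular'' clause has an analogue there, for $19n/3<\ell\le 34n/5$, in Lemma~\ref{lem:short-range}. So your sketch has to stand on its own, and it does not.

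Put $m=n-k=\lfloor(n+1)/3\rfloor$. Your injection needs $-\sigma(K)\in\Sigma_m(Y)$ for every $k$-subset $K$ of $X$, with $|Y|=n$. The DeVos--Goddyn--Mohar theorem gives
$|\Sigma_m(Y)|\ge|H|\bigl(\sum_{Q\in G/H}\min\{m,\mathsf v_Q(Y)\}-m+1\bigr)$, where $H$ is the stabilizer of $\Sigma_m(Y)$ and $\mathsf v_Q(Y)$ counts the terms of $Y$ in the coset $Q$. If $H=\{0\}$, the right side is at most $|Y|-m+1=n-m+1\approx 2n/3$. A trivial stabilizer therefore gives no contradiction, and nothing forces $\Sigma_m(Y)=G$.

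This is not a matter of constants. For $n=3m$ and $Y=0^m1^m2^m$ one has $\Sigma_m(Y)=\{0,1,\dots,2m\}$, which has exactly $n-m+1$ elements and trivial stabilizer. A Kneser-type inequality can certify $\Sigma_m(Y)=G$ only once $|Y|\ge n+m-1$. With such a $Y$ your injection yields only $\binom{\ell-n-m+1}{k}$, not $\binom{\ell-n}{k}$. So the claim that $\lceil(2n-1)/3\rceil$ is where the partition inequality ``closes'' is unfounded.

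\textbf{Your dichotomy is also not exhaustive.} Let $n\ge31$ be prime, and let $S$ consist of $0,1,2$, each with multiplicity $2n$, so $\ell=6n$. For every split $S=XY$ with $|Y|=n$ we have $\Sigma_m(Y)\subseteq\{0,1,\dots,2m\}$. Meanwhile $X$ contains at least $n$ zeros and $n$ ones, so $K=0^{k-1}1$ occurs in $X$, and $-\sigma(K)=n-1>2m$. Hence no choice of $Y$ works. Yet $S$ is not close to two values, nor to two cosets of a proper subgroup, since the only proper subgroup is $\{0\}$. When covering fails with trivial stabilizer, the obstruction is of arithmetic-progression type (Kemperman/Vosper structure), and your degenerate case does not address it.

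\textbf{The degenerate case is not a direct count.} Even where your degenerate case does apply, ``count directly'' hides the real difficulty. Proving $\mathsf N_0^n(S)\ge B(\ell,n)$ for sequences concentrated on two cosets of a subgroup is essentially the conjecture for those sequences. In this paper that requires the matching argument of Lemma~\ref{lem:doubling} or the block argument of Lemma~\ref{lem:large-class}, together with induction on the group order. Your equality characterisation is also asserted without argument.

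A minor slip: $B(2n-1,n)=1$, not $0$. The comparison still holds there, since $\binom{n-1}{k}\ge1$.
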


We shall also use Kneser's theorem in the following form. For
nonempty finite subsets $A_1,\ldots,A_r$ of an abelian group, let
$H=\{h:(A_1+\cdots+A_r)+h=A_1+\cdots+A_r\}$ be the stabilizer
of their sumset. Then
\begin{equation}
 |A_1+\cdots+A_r|
 \ge \sum_{i=1}^r|A_i+H|-(r-1)|H|.
 \label{eq:kneser}
\end{equation}
See \cite{Kneser1953} or the formulation in
\cite{Grynkiewicz2006}. In the quotient by $H$ the sumset has
trivial stabilizer.

We finish this section with two elementary facts about positional
counts. The second is the source of the extra representations used
in the final argument.

\begin{lemma}
\label{lem:majority}
Let $S\in\mathcal F(C_n)$. If
$\mathsf h(S)\ge\lfloor |S|/2\rfloor$, then
$\mathsf N_0^n(S)\ge B(|S|,n)$.
\end{lemma}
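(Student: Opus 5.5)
The plan is to translate so that the majority element becomes $0$, and then build an injection from $n$-subsets of the remaining terms into zero-sum $n$-subsets.

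\textbf{Reduction.} Let $g$ satisfy $\mathsf v_g(S)=m=\mathsf h(S)\ge\lfloor\ell/2\rfloor$, where $\ell=|S|$. Since $ng=0$, replacing every term $g_i$ by $g_i-g$ does not change which index sets $I$ with $|I|=n$ have zero sum. So we may assume $g=0$. Write $S=0^mT$ with $0\nmid T$ and $|T|=r=\ell-m\le\lceil\ell/2\rceil$. The $n$-subsets consisting only of zeros contribute $\binom{m}{n}$. The function $x\mapsto\binom{x}{n}$ is convex and $m+r=\ell$ is at least as unbalanced as $\lfloor\ell/2\rfloor+\lceil\ell/2\rceil$, so $\binom{m}{n}+\binom{r}{n}\ge B(\ell,n)$. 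It therefore suffices to produce at least $\binom{r}{n}$ zero-sum $n$-subsets that use at least one position of $T$.

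\textbf{Injection when $m\ge r$.} Fix an injection $\varphi$ from the positions of $T$ into the positions of the zeros. For each $n$-subset $Y$ of positions of $T$, choose a nonempty zero-sum subset $Z_Y\subseteq Y$; one exists because $\mathsf D(C_n)=n$, for instance by pigeonhole on the $n+1$ prefix sums. Map $Y$ to $Z_Y\cup\varphi(Y\setminus Z_Y)$. This is a zero-sum $n$-subset of $S$ that meets $T$. From the image, its $T$-part recovers $Z_Y$ and its zero part recovers $Y\setminus Z_Y$ through $\varphi^{-1}$. Hence $Y$ is determined and the map is injective. This settles every case with $m\ge r$, which includes all even $\ell$ and all odd $\ell$ with $m\ge\lceil\ell/2\rceil$.

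\textbf{Main obstacle: $\ell=2m+1$, $r=m+1$.} Here the target is exactly $\binom{m}{n}+\binom{m+1}{n}$, and there is one zero too few. I would single out one position $t_0$ of $T$ and let $\varphi$ be an injection from $T\setminus\{t_0\}$ into the zeros. For $Y\not\ni t_0$ the argument above goes through unchanged. For $Y\ni t_0$, I would order $Y$ with $t_0$ first and take $Z_Y$ from a coincidence $s_i=s_j$ of prefix sums, choosing $i=0$ whenever possible so that $t_0\in Z_Y$. In that case $t_0$ never has to be sent to a zero.

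The difficulty is the remaining sets $Y\ni t_0$ where no zero-sum block contains $t_0$. For these I would try two things. First, reorder so that $t_0$ is last and use the block structure to pair $Y$ with an $n$-subset of $T$ lying entirely inside $T$. Second, show that these residual $Y$ force additional zero-sum $n$-subsets within $T$ or $0^mT$ that are not hit by the injection. If neither works, I would fall back on the extra multiplicity available when $\mathsf h(S)$ is attained by two different values, or on a direct count using $\mathsf N_0^k(T)$ together with the identity $\mathsf N_0^n(S)=\sum_k\binom{m}{n-k}\mathsf N_0^k(T)$.
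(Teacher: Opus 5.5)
Your translation step, the convexity reduction to finding $\binom rn$ zero-sum $n$-sets that meet $T$, and your injection for $m\ge r$ are all correct. However, the hypothesis $\mathsf h(S)\ge\lfloor |S|/2\rfloor$ also allows odd $\ell=2m+1$ with $\mathsf h(S)=m$, so that $r=m+1$, and your proposal does not prove this case. The $t_0$ construction leaves open the sets $Y\ni t_0$ with $t_0\notin Z_Y$. The remedies you list are only things you would try. As written, you have proved the lemma only when $m\ge r$.

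The idea you are missing is that only $Y\setminus Z_Y$ has to be sent to zeros. Since $Z_Y\ne\emptyset$, this set lies in $T\setminus Z_Y$, which has size $r-|Z_Y|\le r-1\le m$. Your decoding recovers $Z_Y$ first, as the $T$-part of the image, so the injection may depend on it. For each nonempty $Z\subseteq T$, fix an injection $\varphi_Z$ from $T\setminus Z$ into the zero positions; this exists because $m\ge r-1$. Then map $Y\mapsto Z_Y\cup\varphi_{Z_Y}(Y\setminus Z_Y)$. The same decoding shows this is injective, and it handles $m\ge r-1$ uniformly, including the odd balanced case. Your $t_0$ idea can be repaired this way too: when $t_0\notin Z_Y$, the zeros $\varphi(Z_Y)$ are unused, so send $t_0$ to $\varphi(z)$ for a fixed $z\in Z_Y$.

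This is exactly the paper's argument in counting form, and it uses the fallback identity you mention. Double counting pairs $(Z,Y)$ gives $\binom rn\le\sum_{j\ge1}\mathsf N_0^j(T)\binom{r-j}{n-j}$. Since $m\ge r-1\ge r-j$ for $j\ge1$, the identity $\mathsf N_0^n(S)=\binom mn+\sum_{j\ge1}\mathsf N_0^j(T)\binom m{n-j}$ then yields $\mathsf N_0^n(S)\ge\binom mn+\binom rn$.
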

\begin{proof}
The empty sequence is immediate, so assume $S\ne1$.
Translate a term of maximum multiplicity to zero. Translation does
not change the sum of any $n$ terms in $C_n$. Write $S=0^aT$ with
$0\notin\supp(T)$ and $|T|=c$. Thus $a+c=|S|$ and $a\ge c-1$.
Every selection of $n$ positions of $T$ contains a nonempty zero-sum
subsequence: among the $n+1$ partial sums of any ordering, two are
equal. Counting incidences between such selections and their
nonempty zero-sum subsequences gives
\[
 \binom cn\le\sum_{j=1}^{\min(n,c)}
            \mathsf N_0^j(T)\binom{c-j}{n-j}.
\]
As $a\ge c-j$ for $j\ge1$, it follows that
\[
 \mathsf N_0^n(S)
 =\binom an+\sum_{j=1}^{\min(n,c)}
                  \mathsf N_0^j(T)\binom a{n-j}
 \ge\binom an+\binom cn\ge B(a+c,n).
\]
The last inequality follows by balancing $a,c$, since
$\binom{x+1}{n}-\binom xn=\binom x{n-1}$ is nondecreasing in
$x\in\N_0$. If $c<n$, the incidence inequality has zero on the
left and remains valid.
\end{proof}

The following lemma is well known.
\begin{lemma}
\label{lem:nMZS}
Let $n\ge3$ and $U\in\mathcal F(C_n)$.
\begin{enumerate}
  \item If $U$ is minimal zero-sum and $|U|=n$, then $U=h^n$ for some $h\in C_n$ with order $n$.
  \item If $U$ is zero-sum free and $|U|=n-1$, then $U=h^{n-1}$ for some $h\in C_n$ with order $n$.
\end{enumerate}
\end{lemma}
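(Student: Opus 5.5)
I would prove part (2) first and then deduce part (1) from it. The tool throughout is the standard partial-sum argument.

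For (2), let $U=u_1\cdot\ldots\cdot u_{n-1}$ be zero-sum free. For any ordering, the partial sums $s_k=u_1+\cdots+u_k$ with $k\in[1,n-1]$ are pairwise distinct and nonzero, since two equal partial sums, or a zero one, would give a nonempty zero-sum subsequence. Hence they are exactly the $n-1$ nonzero elements of $C_n$.

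Now suppose two terms are distinct, say $u_1\ne u_2$. Compare the ordering $u_1,u_2,u_3,\ldots$ with the ordering $u_2,u_1,u_3,\ldots$. The partial sums $s_k$ for $k\ge2$ coincide in the two orderings. Both partial-sum sets equal $G^\bullet$, so the remaining first sums must also agree, which gives $u_1=u_2$, a contradiction. Therefore $U=h^{n-1}$.

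It remains to see that $h$ has order $n$. Zero-sum freeness of $h^{n-1}$ means $kh\ne0$ for all $k\in[1,n-1]$, so $\operatorname{ord}(h)\ge n$, and hence $\operatorname{ord}(h)=n$.

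For (1), let $U$ be minimal zero-sum with $|U|=n\ge3$. Deleting any single term $u$ leaves a sequence $Uu^{-1}$ of length $n-1$. This sequence is zero-sum free: a nonempty zero-sum subsequence of it would be a proper nonempty zero-sum subsequence of $U$, contradicting minimality. By (2), $Uu^{-1}=h^{n-1}$ with $\operatorname{ord}(h)=n$. Since $\sigma(U)=0$, the deleted term is $u=-(n-1)h=h$, so $U=h^n$.

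The only delicate point is the exchange step in (2). It needs $n-1\ge2$ so that two terms exist to swap, which holds since $n\ge3$. It also needs the observation that the partial sums from index $2$ onward are unchanged by swapping the first two terms. Both orderings then exhaust $G^\bullet$ with the same tail, so their first partial sums are equal. The rest is routine.
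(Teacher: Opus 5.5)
Your proof is correct and complete. The partial sums of a zero-sum free sequence of length $n-1$ are distinct and nonzero, so they exhaust $C_n^\bullet$, and swapping two distinct leading terms forces them to coincide. Part (1) then follows by deleting one term and using $\sigma(U)=0$ to get $u=-(n-1)h=h$. The paper does not prove this lemma; it only calls it well known. What you have written is the standard argument that remark alludes to, so there is nothing further to compare.
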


\begin{lemma}
\label{lem:representations}
Let $n\ge3$ and $U\in\mathcal F(C_n^\bullet)$.
\begin{enumerate}
\item If $|U|=n$, then $\mathsf N_g(U)\ge2$ for every
$g\in\Sigma^*(U)$. If $U$ contains an element of order less than
$n$, then $\mathsf N_g(U)\ge3$ for every such $g$.
\item If $|U|=n-1$ and $U$ contains an element of order less than
$n$, then $\mathsf N_g(U)\ge2$ for every $g\in\Sigma^*(U)$.
\end{enumerate}
\end{lemma}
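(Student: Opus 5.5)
The plan is to reduce every assertion to the existence of nonempty zero-sum subsequences of an auxiliary sequence obtained from $U$ by \emph{sign flipping}. Fix $g\in\Sigma^*(U)$ and a subsequence $T\mid U$ with $\sigma(T)=g$; write $U=TR$. Put $W=(-T)R$. Then $|W|=|U|$ and $W\in\mathcal F(C_n^\bullet)$, since negation preserves nonzeroness and orders. A subsequence of $W$ has the form $(-A)B$ with $A\mid T$ and $B\mid R$. If $\sigma((-A)B)=0$, then $(TA^{-1})B\mid U$ has sum $g-\sigma(A)+\sigma(B)=g$. Moreover, distinct choices of the positions $(A,B)$ give distinct position sets $(TA^{-1})B$. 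Hence $\mathsf N_g(U)\ge \mathsf N_0(W)$, counting positionally and including the empty subsequence. So it suffices to bound the number of zero-sum subsequences of $W$, empty one included.

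For (1), the first claim follows because $|W|=n$ is the Davenport constant of $C_n$. So $W$ has a nonempty zero-sum subsequence, and together with the empty one this gives $\mathsf N_0(W)\ge2$.

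For (2), $|W|=n-1$ and $W$ contains an element of order less than $n$. By Lemma~\ref{lem:nMZS}(2), $W$ is therefore not zero-sum free. This again gives $\mathsf N_0(W)\ge2$.

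The second claim of (1) is the main obstacle. We need a second nonempty zero-sum subsequence of $W$ when $|W|=n$ and $W$ contains an element of order less than $n$. I would take a nonempty zero-sum $V\mid W$ and any term $y$ of $V$. If $Wy^{-1}$, of length $n-1$, is not zero-sum free, it contains a nonempty zero-sum subsequence avoiding the position of $y$, which is different from $V$, and we are done. Otherwise Lemma~\ref{lem:nMZS}(2) gives $Wy^{-1}=h^{n-1}$ with $\operatorname{ord}(h)=n$. Then $y$ must be the element of order less than $n$, so $y=-kh$ with $1\le k\le n-1$ and $\gcd(k,n)>1$. In this configuration $W$ really has only one nonempty zero-sum subsequence up to the choice of the $k$ positions of $h$. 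However, the positional count already saves us whenever $2\le k\le n-2$, since then there are $\binom{n-1}{k}\ge n-1\ge 2$ choices.

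What remains is the boundary cases $k\in\{1,n-1\}$. These are impossible, because then $\gcd(k,n)=1$ and $y$ would have order $n$. I will double-check this reasoning, and also confirm that the positional multiplicity is legitimately counted as distinct subsequences of $U$, as the convention after \eqref{eq:positional-count} allows. If some residual configuration survives, my fallback is to return to $U$ itself. Its terms are then, up to sign, copies of $h$ plus one multiple of $h$, and I would verify $\mathsf N_g(U)\ge3$ for all $g\in\Sigma^*(U)$ directly, using binomial multiplicities of the equal terms $\pm h$.
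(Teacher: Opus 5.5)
Your proof is correct and follows the paper's route: the sign-flip injection $\mathsf N_g(U)\ge\mathsf N_0(W)$, the Davenport constant for length $n$, and Lemma~\ref{lem:nMZS}(2) applied after deleting one term of a nonempty zero-sum subsequence. The only difference is in which term is deleted. The paper deletes a term $b$ other than the non-generator $a$, which is possible because the zero-sum subsequence has length at least $2$. The remaining length-$(n-1)$ sequence then still contains $a$ and is automatically not zero-sum free, so your case analysis of $W=h^{n-1}(-kh)$ with $\binom{n-1}{k}\ge2$ is unnecessary, though it is valid.
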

\begin{proof} First, consider the case $g=0$.

If $|U|=n$, then $U$ contains a nonempty zero-sum subsequence. This, together with the empty sequence, implies that $\mathsf N_0(U)\ge2$.

If $|U|=n-1$ and $U$ contains an element of order less than $n$, then $U$ is not zero-sum free by Lemma \ref{lem:nMZS}. So $U$ contains a nonempty zero-sum subsequence. Hence, $\mathsf N_0(U)\ge2$.

Suppose $|U|=n-1$ and $U$ contains an element say $a$, of order less than $n$. Let $V$ be a subsequence of $U$ containing $a$ and $|V|=n-1$. Then $V$ has a nonempty zero-sum subsequence, say $W$. Take a term $b$ of $W$ such that $U\cdot b^{-1}$ contains $a$. Again, $U\cdot b^{-1}$ contains a new nonempty zero-sum subsequence. These two zero-sum subsequence, together with the empty sequence, implies that $\mathsf N_g(U)\ge3$.

Next, let $g\in \Sigma^*(U)$ with $g\neq 0$. Let $W$ be a subsequence of $U$ such that $\sigma(W)=g$. Consider the new sequence: $U'=(-W) \cdot T$, where $T=UW^{-1}$. Then $|U'|=|U|$, and $U$ contains an element of order less than $n$ if and only if so does $U'$. Let $V$ be zero-sum subsequences of $U'$, and write $V=(-V_1)\cdot V_2$, where $-V_1$ and $V_2$ are (possible empty) subsequences of $-W$ and $T$, respectively. Then $V_1$ is a subsequence of $W$ and $\sigma(V_1)=\sigma(V_2)$. We obtain a new subsequence $W\cdot V_1^{-1}\cdot V_2$ of $U$, whose sum is $g$. Moreover, different zero-sum subsequences of $U'$ will lead to different $g$ sum subsequences of $U$. Hence, the lemma holds for $g\neq 0$.
\end{proof}

\section{Binomial estimates and the short range}\label{red:estimates}

Throughout this section, binomial coefficients with an integral lower
argument are extended to real upper arguments by
\[
 \binom{x}{r}=\frac{x(x-1)\cdots(x-r+1)}{r!}
 \qquad (r\in\N,\ x\ge r-1).
\]
We put
\[
 F(x)=x\log x-(x-1)\log(x-1)\quad(x>1),\qquad
 E(x)=F(x/2)-F(x)\quad(x>2).
\]
In particular,
\begin{equation}\label{red:derivatives}
 F'(x)=\log\frac{x}{x-1},\qquad
 F''(x)=-\frac1{x(x-1)},\qquad
 E'(x)=\frac12\log\left(1+\frac1{x(x-2)}\right).
\end{equation}
Thus $F$ is increasing and concave, and $E'$ is positive and decreasing.

\begin{lemma}\label{red:binomial-estimates}
Let $r\in\N$.
\begin{enumerate}
\item The function $x\mapsto\binom{x}{r}$ is convex on $[r-1,\infty)$.
The function
\[
 R_r(x)=\binom{x}{r}\exp\bigl(-rF(x/r)\bigr)
 \qquad(x>r)
\]
is decreasing, and its limit at infinity is $r^re^{-r}/r!$.
Consequently, if $L>2r$ is an integer, then
\begin{equation}\label{red:normalized-ratio}
 \frac{B(L,r)}{\binom Lr}\ge 2\exp\bigl(rE(L/r)\bigr).
\end{equation}
\item Let $n,\ell,a$ be positive integers, $N=\ell/(2n)>1$,
$A=a/n$, and $Q=\lceil\ell/2\rceil$. If $a\ge Q>n$, then
\begin{equation}\label{red:original-ratio}
 \log\frac{\binom an}{\binom Qn}
 \ge n\bigl(F(A)-F(N)\bigr)-F'(N)
       -\frac{1+1/N}{12n}.
\end{equation}
\end{enumerate}
\end{lemma}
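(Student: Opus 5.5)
For part (1), convexity of $x\mapsto\binom xr$ on $[r-1,\infty)$ I would get from the product form. Write $\binom xr=\frac1{r!}\prod_{j=0}^{r-1}(x-j)$. On $(r-1,\infty)$ all factors are positive, so $\log\binom xr$ has second derivative $-\sum_j (x-j)^{-2}$. This alone does not give convexity, so I would argue directly instead. The $k$-th derivative of $\prod_j(x-j)$ is a sum of products of factors $(x-j)$ with $j\le r-1$, and each such product is nonnegative for $x\ge r-1$. Hence the second derivative is nonnegative and the function is convex.

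For the monotonicity of $R_r$, I compute the logarithmic derivative:
\[
\frac{d}{dx}\log R_r(x)=\sum_{j=0}^{r-1}\frac1{x-j}-\log\frac{x/r}{x/r-1}=\sum_{j=0}^{r-1}\frac1{x-j}-\log\frac{x}{x-r}.
\]
Since $\log\frac{x}{x-r}=\int_{x-r}^{x}\frac{dt}{t}=\sum_{j=0}^{r-1}\int_{x-j-1}^{x-j}\frac{dt}t$, and $1/t>1/(x-j)$ on each subinterval, the derivative is negative. So $R_r$ is decreasing.

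For the limit, $\binom xr\sim x^r/r!$. Also $rF(x/r)=r\bigl[\log(x/r)+1+o(1)\bigr]$, because $F(y)=\log y+1+O(1/y)$ via $(y-1)\log(1-1/y)=-1+O(1/y)$. Hence $R_r\to x^r/(r!\,(x/r)^re^r)=r^re^{-r}/r!$.

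For the consequence \eqref{red:normalized-ratio}, I use convexity: $B(L,r)\ge2\binom{L/2}{r}$, because $\lfloor L/2\rfloor,\lceil L/2\rceil\ge r-1$ average to $L/2$. Then I write
\[
\frac{\binom{L/2}r}{\binom Lr}=\frac{R_r(L/2)}{R_r(L)}\exp\bigl(r[F(L/2r)-F(L/r)]\bigr).
\]
Since $L/2>r$ and $R_r$ is decreasing, the ratio $R_r(L/2)/R_r(L)\ge1$. This gives $2\exp(rE(L/r))$.

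For part (2), $a\ge Q>n$, and I work with the exact quantity $\log R_n$ rather than its monotonicity, since a lower bound with explicit error is needed. I write
\[
\log\binom an-\log\binom Qn=n\bigl(F(A)-F(Q/n)\bigr)+\log R_n(a)-\log R_n(Q).
\]
First, $Q\ge\ell/2=nN$ with $Q\le nN+\tfrac12$. Since $F$ is increasing and concave, $F(Q/n)\le F(N)+F'(N)/(2n)$, so $n(F(Q/n)-F(N))\le F'(N)/2\le F'(N)$.

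Second, $\log R_n(a)-\log R_n(Q)\ge\log R_n(\infty)-\log R_n(Q)$. I need an upper bound on $R_n(Q)/R_n(\infty)$ via Stirling. Write $\binom Qn=\frac{Q!}{n!(Q-n)!}$ and use Stirling with the bounds $\sqrt{2\pi m}(m/e)^m\le m!\le\sqrt{2\pi m}(m/e)^me^{1/(12m)}$. Then $\log\binom Qn$ equals $Q\log Q-(Q-n)\log(Q-n)-\log n!+\tfrac12\log\frac{Q}{Q-n}$ plus an error at most $\frac1{12Q}$. Note that $Q\log Q-(Q-n)\log(Q-n)=nF(Q/n)+n\log n$. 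Thus
\[
\log R_n(Q)-\log R_n(\infty)\le\tfrac12\log\frac{Q}{Q-n}+\frac1{12Q}+\text{(Stirling lower-bound slack for }n!\text{ vs }n^ne^{-n}).
\]
The $n!$ terms cancel against the limit constant. Here $\frac12\log\frac{Q/n}{Q/n-1}=\frac12F'(Q/n)\le\frac12F'(N)$. The remaining $\frac1{12Q}$ and $\frac1{12(Q-n)}$ corrections, with $Q\ge nN$ and $Q-n\ge n(N-1)$, are to be absorbed into $\frac{1+1/N}{12n}$. I expect this bookkeeping to be the main obstacle: matching the Stirling correction terms exactly to $\frac{1+1/N}{12n}$, possibly by using the sharper $e^{1/(12m+1)}$ lower bound. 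Combining the two $\frac12F'(N)$ contributions then gives \eqref{red:original-ratio}.
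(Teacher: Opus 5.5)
Your proof is correct. Part (1) follows the paper step for step, and part (2) has the same overall structure: the splitting $\log\binom xn=nF(x/n)+\log R_n(x)$, the concavity estimate $nF(Q/n)\le nF(N)+\tfrac12F'(N)$, and Robbins' bounds for $Q!$ and $(Q-n)!$. The one real difference is how you bound $R_n(a)$. You compare it with the exact limit $n^ne^{-n}/n!$, so $n!$ cancels. The paper instead passes to the Stirling bound $R_n(a)\ge e^{-1/(12n)}/\sqrt{2\pi n}$ and uses $n!\ge\sqrt{2\pi n}(n/e)^n$ inside $R_n(Q)$, and that is exactly where the $\frac1{12n}$ in its error term comes from.

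Consequently the bookkeeping you flag as the main obstacle is immediate in your version. First restore the term $-n$ that you dropped from $\log\binom Qn$. It comes from $e^{-Q}/e^{-(Q-n)}$, and it is needed for the cancellation against $\log(n^ne^{-n}/n!)$. Then the upper Robbins bound for $Q!$ and the correction-free lower bound $\sqrt{2\pi m}(m/e)^m\le m!$ for $(Q-n)!$ give
\[
 \log R_n(Q)-\log\frac{n^ne^{-n}}{n!}\le\tfrac12F'(Q/n)+\frac1{12Q}\le\tfrac12F'(N)+\frac1{12nN}.
\]
So you obtain (2) with the smaller error $\frac1{12nN}$ in place of $\frac{1+1/N}{12n}$.

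No $\frac1{12(Q-n)}$ term occurs, and you should not try to absorb one. For even $\ell$ one has $Q-n=n(N-1)$, so such a term together with $\frac1{12Q}$ would exceed $\frac{1+1/N}{12n}$ whenever $N<2$.
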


\begin{proof}
The second derivative of the product defining $\binom{x}{r}$ is a sum
of products of nonnegative factors when $x\ge r-1$. This proves
convexity, including $r=1$. Moreover,
\[
 \frac{d}{dx}\log R_r(x)
 =\sum_{j=0}^{r-1}\frac1{x-j}-\log\frac{x}{x-r}\le0,
\]
because the sum is bounded above by $\int_{x-r}^{x}dt/t$.
The asserted limit follows either directly from the product or from
$F(u)=\log u+1+o(1)$ as $u\to\infty$. Convexity gives
$B(L,r)\ge2\binom{L/2}{r}$, and $R_r(L/2)\ge R_r(L)$ then gives
\eqref{red:normalized-ratio}.

For the second assertion we use the Stirling inequalities of
Robbins~\cite{Robbins1955},
\begin{equation}\label{red:stirling}
 \sqrt{2\pi k}\,(k/e)^k e^{1/(12k+1)}
 < k! < \sqrt{2\pi k}\,(k/e)^k e^{1/(12k)}
 \qquad(k\in\N).
\end{equation}
The first assertion and these inequalities imply
\[
 R_n(a)\ge\frac{e^{-1/(12n)}}{\sqrt{2\pi n}},
 \qquad
 R_n(Q)\le\frac{e^{1/(12Q)}}{\sqrt{2\pi n}}
                 \sqrt{\frac Q{Q-n}}.
\]
Since $Q/n\ge N$, it follows that
\[
 \log\frac{R_n(a)}{R_n(Q)}
 \ge-\frac12F'(N)-\frac{1+1/N}{12n}.
\]
Finally, $0\le Q-\ell/2\le1/2$ and the concavity of $F$ give
$nF(Q/n)\le nF(N)+F'(N)/2$. Combining these inequalities proves
\eqref{red:original-ratio}.
\end{proof}

We record the numerical estimates used below with explicit rational
bounds. This also fixes all constants at the endpoints of the ranges.

\begin{lemma}\label{red:log-certificate}
The following strict bounds hold:
\begin{align}
 \frac23F(87/10)-F(17/5)&>\frac9{1000},\label{red:short-constant}\\
 2\log2+\frac12\log(19/13)&<\frac{79}{50},\label{red:rounding-constant}\\
 F(4947/2500)+4F(4947/1250)-5F(17/5)&>\frac{21}{1000},
                                      \label{red:large-constant}\\
 \frac{21}{40}-E'(84/25)-\log(17/12)-\frac{11}{12750}
       &>\frac7{100}.                 \label{red:large-endpoint}
\end{align}
\end{lemma}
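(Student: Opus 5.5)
These four strict inequalities are purely numerical, so the plan is to certify each one by explicit rational bounds on the logarithms involved, with no analysis beyond elementary estimates. Write $F(x)=x\log x-(x-1)\log(x-1)$ in terms of $\log$ of the rationals $87/10$, $77/10$, $17/5$, $12/5$, $87/20$, $67/20$, $17/10$, $7/10$, $4947/2500$, $2447/2500$, $4947/1250$, $3697/1250$. Every such logarithm reduces to integer combinations of $\log$ of small primes plus logs of ratios close to $1$. For the primes $2,3,5,7,13,17,19,29$ we use certified enclosures coming from rapidly converging series, for instance
\[
\log\frac{1+t}{1-t}=2\Bigl(t+\frac{t^3}{3}+\frac{t^5}{5}+\cdots\Bigr).
\]
Here $t=1/(2m+1)$, and truncating after $k$ terms with a geometric tail bound gives rational upper and lower bounds. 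For the awkward numerators $4947=3\cdot 17\cdot 97$, $2447$ (prime) and $3697$ (prime), it is better to write them as a nearby smooth number times $(1+\delta)$ and expand $\log(1+\delta)$ with an alternating-series error bound. Each appearance of a logarithm is then replaced by its lower or upper bound according to the sign of its coefficient, so every left-hand side is bounded in the correct direction by a rational number.

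For \eqref{red:short-constant}, I evaluate $\tfrac23F(8.7)$ and $F(3.4)$. Since $F(x)\approx \log x+1-\tfrac1{2x}$, the difference is roughly $\tfrac23(3.10)-2.13\approx 0.0$-something, and the margin $9/1000$ is small. I therefore expect about six correct decimals to be needed.

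For \eqref{red:rounding-constant}, $2\log 2\approx1.3863$ and $\tfrac12\log(19/13)\approx0.1897$ give $\approx1.5760<1.58$. The margin is about $0.004$, so crude bounds $\log2<0.69315$ and $\log(19/13)<0.3795$ suffice.

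For \eqref{red:large-constant}, again combine the expansions of $F$ at $4947/2500\approx1.9788$, $4947/1250\approx3.9576$ and $3.4$.

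For \eqref{red:large-endpoint}, use $E'(84/25)=\tfrac12\log\bigl(1+\tfrac{1}{x(x-2)}\bigr)$ with $x(x-2)=84\cdot34/625=2856/625$, which gives $\tfrac12\log(3481/2856)$, together with $\log(17/12)$. Note that $3481=59^2$, so that logarithm becomes $\log 59-\tfrac12\log 2856$, which also needs enclosures of $\log 59$ and $\log 2856=\log(2^3\cdot3\cdot7\cdot17)$. Then check that the remainder exceeds $0.07$.

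The main obstacle is bookkeeping rather than ideas. The delicate cases are \eqref{red:short-constant} and \eqref{red:large-constant}: their margins ($0.009$ and $0.021$) are small relative to terms of size about $2$–$10$ that nearly cancel in $x\log x-(x-1)\log(x-1)$. For these I will avoid computing the large terms separately. Instead I use $F(x)=\log x+(x-1)\log\frac{x}{x-1}$ and expand $(x-1)\log\bigl(1+\tfrac1{x-1}\bigr)=1-\tfrac{1}{2(x-1)}+\tfrac{1}{3(x-1)^2}-\cdots$ as an alternating series with explicit truncation error. This leaves only the single logarithm $\log x$ to enclose precisely, which keeps the error control transparent. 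The resulting rational inequalities can then be checked by hand or by exact rational arithmetic.
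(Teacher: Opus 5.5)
Your method is essentially the paper's: certified rational enclosures of each logarithm from $\log\frac{1+z}{1-z}=2\sum_j z^{2j+1}/(2j+1)$ with a geometric tail bound, substituted in the direction dictated by the sign of each coefficient. The paper organizes this differently. It writes each rational as $2^k u$ with $1\le u\le 2$, uses sixteen terms at $z=(u-1)/(u+1)$ instead of factoring into primes, and then records the resulting enclosures in a table. Your plan would work, but as written it has slips you should fix before carrying it out.

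\emph{The shortcut.} The cancellation-avoiding expansion $(x-1)\log\bigl(1+\frac1{x-1}\bigr)=1-\frac1{2(x-1)}+\frac1{3(x-1)^2}-\cdots$ diverges for the term $F(4947/2500)$ in \eqref{red:large-constant}, since there $x-1=2447/2500<1$. For that term, either enclose $\log(4947/2447)$ directly with the artanh series, or use $F(x)=\log x+1-\sum_{j\ge1}\frac{1}{j(j+1)x^j}$, which converges for all $x>1$. There is no cancellation problem in that term anyway, since $(x-1)\log(x-1)\approx-0.021$. Likewise, $\log(1+\delta)$ is an alternating series only for $\delta>0$. So take the smooth neighbour below $2447$, $3697$ or $97$, or use a geometric tail bound.

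\emph{The numerical heuristics understate the tightness.}
\begin{itemize}
\item $F(17/5)\approx2.0597$, not $2.13$. With $2.13$, your own estimate $\tfrac23(3.10)-2.13$ is negative, so \eqref{red:short-constant} would appear false.
\item The left sides of \eqref{red:short-constant} and \eqref{red:large-constant} are about $0.0092977$ and $0.0211979$. The slack is therefore only about $3\times10^{-4}$ and $2\times10^{-4}$, not $0.009$ and $0.021$ (those are the thresholds).
\end{itemize}
Your target of about six correct decimals is still sufficient. However, the proof consists of actually producing enclosures that resolve these gaps, as the paper's table does (e.g.\ $9297/10^6$ and $21197/10^6$ as lower bounds).

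\emph{Bookkeeping.} $\log(77/10)$ needs $\log 11$, which is missing from your list of primes. The rationals $87/20$, $67/20$, $17/10$ and $7/10$ never occur in these four expressions.
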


\begin{proof}
For $1\le x\le2$, put $z=(x-1)/(x+1)$ and
\[
 L(x)=2\sum_{j=0}^{15}\frac{z^{2j+1}}{2j+1},\qquad
 U(x)=L(x)+\frac{2z^{33}}{33(1-z^2)}.
\]
The series for $\log((1+z)/(1-z))$ gives
$L(x)\le\log x\le U(x)$, with strict inequalities when $x>1$.
For any positive rational $x$, write $x=2^ku$, where $k\in\Z$ and
$1\le u\le2$, and use $\log x=k\log2+\log u$. Substituting these
rational bounds, with the direction reversed for negative
coefficients, gives the following table. Each entry denotes the
expression on the corresponding left-hand side above.
\[
\begin{array}{c|r|r}
 \text{expression}&\text{strict lower bound}&\text{strict upper bound}\\ \hline
 \eqref{red:short-constant}&9297/10^6&9298/10^6\\
 \eqref{red:rounding-constant}&1576039/10^6&1576040/10^6\\
 \eqref{red:large-constant}&21197/10^6&21198/10^6\\
 \eqref{red:large-endpoint}&76881/10^6&76882/10^6
\end{array}
\]
All entries involve only the displayed finite sums of rational numbers
and their explicit remainder bounds. They imply the assertions.
\end{proof}

\begin{lemma}\label{lem:short-range}
Let $n\ge180$, or let $n\in\{105,165\}$. If $S\in\mathcal F(C_n)$
and $|S|\le\lfloor34n/5\rfloor$, then
$\mathsf N_0^n(S)\ge B(|S|,n)$.
\end{lemma}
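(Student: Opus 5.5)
The plan is to reduce the statement to a purely numerical comparison of binomial coefficients, using Theorem~\ref{thm:grynkiewicz}. Put $\ell=|S|$ and $k=\lceil(2n-1)/3\rceil$. Every $n$ in the statement satisfies $n\ge 30$, so Theorem~\ref{thm:grynkiewicz} applies. If $\ell<n$, both sides vanish. If $n\le \ell\le 19n/3$, the theorem already gives the assertion. So it remains to treat $19n/3<\ell\le\lfloor 34n/5\rfloor$. There it suffices to show
\[
 \binom{\ell-n}{k}\ge B(\ell,n),
\]
because then the minimum in Theorem~\ref{thm:grynkiewicz} equals $B(\ell,n)$.

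To bound $B(\ell,n)$ from above, use $B(\ell,n)\le 2\binom{Q}{n}$ with $Q=\lceil \ell/2\rceil$. Then apply the Stirling-type estimate of Lemma~\ref{red:binomial-estimates}. Concretely, put $N=\ell/(2n)$ and use the decreasing function $R_r$ together with Robbins' bounds \eqref{red:stirling}. This gives
\[
 \log\binom Qn\le nF(N)+O(1),
\]
where the $O(1)$ collects $\tfrac12F'(N)$ (rounding $Q$ versus $\ell/2$) and the $\sqrt{Q/(Q-n)}$ factor. Similarly, with $x=(\ell-n)/k$, monotonicity of $R_k$ gives
\[
 \log\binom{\ell-n}{k}\ge kF(x)-O(1).
\]
Here the $O(1)$ comes from $R_k(\ell-n)\ge R_k(\infty)$ and from $k$ versus $2n/3$; the square-root factors should produce something like $\tfrac12\log(19/13)$. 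So we must show
\[
 kF\bigl((\ell-n)/k\bigr)-nF\bigl(\ell/(2n)\bigr)\ge \log 2+\text{(error constants)}.
\]

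Next, locate the worst case in $\ell$. Heuristically,
\[
 \frac{d}{d\ell}\Bigl[kF\bigl(\tfrac{\ell-n}{k}\bigr)\Bigr]=F'(x)\approx\log\frac{8.7}{7.7},
\qquad
 \frac{d}{d\ell}\Bigl[nF\bigl(\tfrac{\ell}{2n}\bigr)\Bigr]=\tfrac12F'(N)\approx\tfrac12\log\frac{3.4}{2.4},
\]
and the second is larger. So the difference is decreasing in $\ell$, and the worst case is the right endpoint $\ell\approx 34n/5$. There $N=17/5$ and $x\approx 87/10$, since $k\approx 2n/3$. I would make this rigorous using concavity of $F$ and the monotonicity of $F'$, comparing both slopes on the whole interval $N\in(19/6,17/5]$. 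At the endpoint, \eqref{red:short-constant} gives a main term of at least $\tfrac{9}{1000}n$. The constants $2\log 2+\tfrac12\log(19/13)<79/50$ from \eqref{red:rounding-constant} are meant to absorb the factor $2$, the two square-root corrections and the rounding of $k$ and $Q$. Since $\tfrac{9}{1000}\cdot 180=1.62>1.58$, the case $n\ge180$ follows.

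The main obstacle is the bookkeeping of these lower-order terms. I have to check that the replacements $k\leftrightarrow(2n-1)/3$, $Q\leftrightarrow \ell/2$ and $\lfloor 34n/5\rfloor\leftrightarrow 34n/5$ together cost no more than the budget $79/50$, uniformly for $n\ge180$. I would do this first, carefully tracking each term via concavity of $F$ and the $1/(12k)$ Robbins corrections.

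The two exceptional moduli $n\in\{105,165\}$ do not fit the asymptotic budget, since $0.009\cdot105<1.58$. For them I would verify $\binom{\ell-n}{k}\ge B(\ell,n)$ directly for each integer $\ell$ in the finite range $(19n/3,\lfloor34n/5\rfloor]$. If necessary I would use the sharper exact form of $B$ rather than $2\binom Qn$, together with the same monotonicity in $\ell$ to reduce to the endpoint check.
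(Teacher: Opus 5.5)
Your plan is essentially the paper's proof. It has the same reduction through Theorem~\ref{thm:grynkiewicz} to $\binom{\ell-n}{k}\ge 2\binom{Q}{n}$, the same Stirling comparison with the worst case at $\ell=34n/5$ and the constants \eqref{red:short-constant}--\eqref{red:rounding-constant}, and an exact finite check for $n\in\{105,165\}$; the paper shortens that check by proving exact monotonicity in $\ell$ within each parity class and testing only the largest even and odd length.

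One point in your bookkeeping needs correcting. The normalizations $\sqrt{2\pi k}$ and $\sqrt{2\pi n}$ are not separately $O(1)$. Together with $\sqrt{Q/(Q-n)}$ they form the factor $\sqrt{n(Q-n)/(kQ)}\ge1$, since $n/k\ge 3n/(2n+1)>19/13>Q/(Q-n)$. So the budget $2\log2+\tfrac12\log(19/13)$ consists exactly of the factor $2$, the cost $\tfrac13\log 8$ of replacing $k$ by $2n/3$, and the cost $\tfrac12F'(N)\le\tfrac12\log(19/13)$ of replacing $Q$ by $\ell/2$. If you also charged $\tfrac12\log(19/13)$ for the square roots, as your sketch suggests, the budget would be about $1.77>1.62$ and the case $n=180$ would be lost.
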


\begin{proof}
Put $\ell=|S|$. If $\ell\le2n-2$, then $B(\ell,n)=0$ and the
assertion is immediate. Theorem~\ref{thm:grynkiewicz} gives the
assertion for $2n-1\le\ell\le19n/3$. For the remaining lengths it suffices, by the
same theorem, to prove
\begin{equation}\label{red:published-comparison}
 \binom{\ell-n}{r}\ge B(\ell,n),\qquad
 r=\left\lceil\frac{2n-1}{3}\right\rceil.
\end{equation}
First suppose $n\ge180$. Set $D=\ell-n$, $h=\lceil\ell/2\rceil$,
$t=\ell/n$, and
\[
 H(x,y)=x\log x-y\log y-(x-y)\log(x-y).
\]
Throughout $19/3<t\le34/5$. Since
$2n/3-1/3\le r\le2n/3+1/3$ and
$\partial H(D,y)/\partial y=\log((D-y)/y)$, we have
\[
 H(D,r)\ge H(D,2n/3)-\frac13\log8.
\]
Indeed the derivative is positive and less than $\log8$ throughout
the possible interval between $r$ and $2n/3$. Likewise,
\[
 H(h,n)\le nF(t/2)+\frac12\log(19/13).
\]
The function
\[
 G(t)=\frac23F\bigl(3(t-1)/2\bigr)-F(t/2)
\]
is decreasing for $t>2$. To see this, differentiate and square the
positive quantities in
\[
 G'(t)=\log\frac{t-1}{t-5/3}-\frac12\log\frac t{t-2}<0;
\]
the difference of the cross-multiplied expressions is
$2(3t^2-10t+9)/9>0$. Thus
\[
 H(D,r)-H(h,n)
 \ge nG(34/5)-\frac13\log8-\frac12\log(19/13).
\]
By \eqref{red:stirling},
\begin{align*}
 \frac{\binom Dr}{2\binom hn}
 &\ge\frac12\sqrt{\frac{Dn(h-n)}{hr(D-r)}}\,
 \exp\left(H(D,r)-H(h,n)
 -\frac1{12r}-\frac1{12(D-r)}-\frac1{12h}\right).
\end{align*}
The square-root factor is at least $1$: indeed $D/(D-r)>1$,
$h\ge19n/6$, and $r/n\le13/19$. Moreover,
$r\ge n/2$, $D-r\ge4n$, and $h\ge3n$, so that the sum of the
three error terms is less than $1/(4n)$. Therefore
\[
 \log\frac{\binom Dr}{2\binom hn}
 \ge nG(34/5)-2\log2-\frac12\log(19/13)-\frac1{4n}
 >\frac{9n}{1000}-\frac{79}{50}-\frac1{4n}>0.
\]
The last inequality holds at $n=180$ and its right-hand side increases
with $n$. Since $B(\ell,n)\le2\binom hn$, this proves
\eqref{red:published-comparison}.

For $n\in\{105,165\}$ we give four exact endpoint comparisons.
Here $r=2n/3$. Define
\[
 T_n(\ell)=\frac{\binom{\ell-n}{2n/3}}
                      {2\binom{\lceil\ell/2\rceil}{n}}.
\]
For a fixed parity write $\ell=2h-e$, where $e\in\{0,1\}$, and set
$v=2h-e-5n/3+1$. Direct cancellation gives
\[
 \frac{T_n(\ell+2)}{T_n(\ell)}
 =\frac{(v+r)(v+r+1)(h+1-n)}{v(v+1)(h+1)}<1.
\]
For completeness, the denominator minus the numerator, multiplied by
$9/n$, is
\[
 P_e(h)=12h^2+(12-24e-20n)h+9n^2+(18e-11)n+9e^2-15e.
\]
It is increasing for $h\ge19n/6$, and at $19n/6$ its value is
$66n^2+(27-58e)n+9e^2-15e>0$. The denominators are positive in the
range in question. It remains to check the largest length of each
parity. In the following table, each row satisfies
$k<1000T_n(\ell)<k+1$:
\begin{equation}\label{red:finite-table}
\begin{array}{c|r|r|r|r}
 n&\ell&D=\ell-n&h=\lceil\ell/2\rceil&k\\ \hline
 105&714&609&357&1451\\
 105&713&608&357&1284\\
 165&1122&957&561&2535\\
 165&1121&956&561&2244
\end{array}
\end{equation}
These are integer-product comparisons: an entry with lower endpoint
$k$ means exactly
\[
 2k\,r!\prod_{j=0}^{n-1}(h-j)
 <1000\,n!\prod_{j=0}^{r-1}(D-j)
 <2(k+1)\,r!\prod_{j=0}^{n-1}(h-j).
\]
Thus the table is a finite rational certificate, with all its factors
specified. Every lower endpoint exceeds $1000$, which proves
\eqref{red:published-comparison} for these two values of $n$.
\end{proof}

\section{Doubling the group order}\label{red:doubling-section}

The doubling step rests on an extremal property of two-color
matchings. A maximum matching on an odd set leaves one vertex unmatched.
All matchings below are chosen uniformly from the maximum matchings.

\begin{lemma}\label{red:matching}
Color a set of $\ell$ vertices with two colors, put
$t=\lfloor\ell/2\rfloor$, and let $M$ be the number of monochromatic
pairs in a uniformly chosen maximum matching. For $1\le b\le t$,
\begin{equation}\label{red:matching-bound}
 \mathbb E B(M,b)\ge B(\ell,2b)
                       \frac{\binom tb}{\binom\ell{2b}}.
\end{equation}
\end{lemma}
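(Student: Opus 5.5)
The plan is to reduce \eqref{red:matching-bound} to the most balanced coloring, where it becomes an identity, and then show that unbalancing the coloring can only increase $\mathbb E B(M,b)$.

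\textbf{Step 1 (the factor $\binom tb/\binom\ell{2b}$).} Write $x\ge y$ for the two color-class sizes, so $x+y=\ell$. I will first check by direct counting that, for every fixed set $X$ of $2b$ vertices, the probability that a uniformly chosen maximum matching contains $b$ pairs covering $X$ is
\[
 P=\binom tb\Big/\binom\ell{2b}.
\]
\begin{itemize}
\item For $\ell$ even, the number of perfect matchings of $X$ is $(2b-1)!!$, and the number of perfect matchings of the complement of $X$ is $(\ell-2b-1)!!$. Dividing their product by $(\ell-1)!!$ gives exactly $\binom tb/\binom\ell{2b}$.
\item For $\ell$ odd, I repeat the computation with the unmatched vertex forced to lie outside $X$. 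The same quotient appears.
\end{itemize}
Write $M_r$ and $M_{bl}$ for the numbers of red--red and blue--blue pairs, so $M=M_r+M_{bl}$. Linearity of expectation, summed over $2b$-sets $X$ inside one color class, then gives
\[
 \mathbb E\Bigl[\binom{M_r}b+\binom{M_{bl}}b\Bigr]
 =\Bigl(\binom x{2b}+\binom y{2b}\Bigr)P .
\]

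\textbf{Step 2 (the balanced case is an identity).} Take $x=\lceil\ell/2\rceil$ and $y=\lfloor\ell/2\rfloor$. Counting red vertices gives $x=2M_r+K+\epsilon_r$, where $K$ is the number of mixed pairs and $\epsilon_r\in\{0,1\}$ records whether the unmatched vertex is red. The analogous formula holds for blue. Hence $2(M_r-M_{bl})=x-y-(\epsilon_r-\epsilon_{bl})$, which forces $|M_r-M_{bl}|\le 1$. Consequently $\{M_r,M_{bl}\}=\{\lfloor M/2\rfloor,\lceil M/2\rceil\}$ pointwise, and therefore $B(M,b)=\binom{M_r}b+\binom{M_{bl}}b$. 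Combined with Step 1 and $B(\ell,2b)=\binom x{2b}+\binom y{2b}$, this gives \eqref{red:matching-bound} with equality for the balanced coloring.

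\textbf{Step 3 (monotonicity in the imbalance).} It remains to show that $\mathbb E B(M,b)$ does not decrease when the imbalance $x-y$ grows. By Step 2's identity, $M_r-M_{bl}$ is determined by $x-y$ up to the parity term, so $M=2M_{bl}+(x-y)/2$ up to a bounded correction. Writing $d$ for this fixed offset, we get
\[
 B(M,b)\ge 2\binom{M_{bl}+\lfloor d/2\rfloor}b .
\]
I would compare colorings $(x,y)$ and $(x+1,y-1)$, with both colorings using the same uniform matching. The coupling recolors a single blue vertex $v$ red.
\begin{itemize}
\item If the partner of $v$ is red, then $M$ increases by $1$.
\item If the partner of $v$ is blue, then $M$ decreases by $1$.
\item If $v$ is unmatched, then $M$ is unchanged.
\end{itemize}
Since $x\ge y$, a red partner is at least as likely as a blue one. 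I would then use that the increments $B(m+1,b)-B(m,b)=\binom{\lfloor m/2\rfloor}{b-1}$ are nondecreasing in $m$, so $B(\cdot,b)$ is convex on $\N_0$. The goal is to conclude that the expected change in $B(M,b)$ is nonnegative. This needs the finer statement that, conditionally on the rest of the matching, the up-move is at least as likely as the down-move; I would verify this by conditioning on the partner class of $v$ and using the exchangeability of the uniform matching.

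\textbf{Main obstacle.} I expect Step 3 to be the hard part. The up- and down-moves occur on different events, and convexity only helps if the increment $B(M+1,b)-B(M,b)$ on the up-event dominates the decrement $B(M,b)-B(M-1,b)$ on the down-event. If the direct coupling fails, my fallback is the bound $B(M,b)\ge 2\binom{M_{bl}+\lfloor d/2\rfloor}b$ from Step 3. I would compute its expectation exactly by the linearity of Step 1, summing over $2b$-sets of the form ($2\lfloor d/2\rfloor$-ish red-slot configurations together with blue sets). I would then compare the result with $B(\ell,2b)P$ through binomial identities.
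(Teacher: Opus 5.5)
Your Steps 1 and 2 are correct and coincide with the final part of the paper's proof. At the balanced split, $B(M,b)=\binom{M_r}b+\binom{M_{bl}}b$ holds pointwise, and linearity gives equality in \eqref{red:matching-bound}.

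\textbf{The gap is Step 3, and the claim there is false, not merely unverified.} It is false in general for even $\ell$ that $\mathbb E B(M,b)$ does not decrease when one vertex is recolored from the smaller class to the larger. Take $\ell=8$ and $b=2$; here the right side of \eqref{red:matching-bound} is $6/35$. For the split $(5,3)$ one has $M=2M_{bl}+1$ with $\mathbb P(M_{bl}=1)=3/7$, so $\mathbb E B(M,2)=3/7$. For the split $(6,2)$ one has $M=4$ with probability $1/7$ and $M=2$ otherwise, so $\mathbb E B(M,2)=2/7$.

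The cause is parity. For even $\ell$, your own Step~2 computation gives $M=2M_{bl}+(x-y)/2$, so a single recoloring flips the parity of $M$. In your coupling, the residual count $R$ after an up-move (residual split $(x-1,y-1)$) and after a down-move (residual split $(x,y-2)$) therefore has opposite parities. So your ``finer statement'', that the up-move is at least as likely as the down-move given $R=s$, fails for every $s$ in the support of the down-move. Convexity of $B(\cdot,b)$ does not rescue it: in the example the expected change is $\tfrac57\cdot\tfrac15-\tfrac27=-\tfrac17$.

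Your fallback also fails, and precisely near balance. At the split $(t+1,t-1)$ with $\ell=2t$, it discards the term $\binom{M_{bl}}{b-1}$ and yields only $2A_b(t-1)$ (in the paper's notation). This is below the target $2A_b(t)$; for $\ell=8$, $b=1$ it gives $6/7<12/7$.

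The missing idea is to treat the two parities of $\ell$ separately, as the paper does.

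For even $\ell$, compare splits $a$ and $a+2$, which have the same parity of $M$. The explicit law $p_a(u)$ of the number $T$ of bichromatic pairs has likelihood ratio $p_{a+2}(u)/p_a(u)$ increasing in $u$. Hence $T$ increases stochastically towards balance, i.e.\ $M=t-T$ decreases. This reduces the minimum to the splits $(t,t)$ and $(t-1,t+1)$, which are compared directly by
$2A_b(t-1)+A_{b-1}(t-1)-2A_b(t)=\frac{2b-1}{2t-2b+1}A_{b-1}(t-1)\ge0$.

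For odd $\ell$, your single-vertex recoloring is exactly the paper's coupling, and $M$ is then genuinely stochastically monotone. However, the needed pointwise inequality $x\,\mathbb P_{x-1,y-1}(R=s)\ge(y-1)\,\mathbb P_{x,y-2}(R=s)$ does not follow from exchangeability, because the two residual splits differ. The paper obtains it from the closed-form probability of $u$ bichromatic pairs on an odd vertex set. In the paper's notation, with classes $a\le c-1$, the ratio of the two residual probabilities is $a/(c-1)$ or $\frac a{c-1}\frac{c-1-u}{a-u}$.
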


\begin{proof}
We first show that the expectation is minimized when the two color
classes have sizes $\lfloor\ell/2\rfloor$ and
$\lceil\ell/2\rceil$.

Suppose $\ell=2t$, and let the class sizes be $a\le c=2t-a$.
Write $D(2j)=(2j-1)!!$, with $D(0)=1$. If $T$ is the number of
bichromatic pairs, then $M=t-T$, and direct enumeration gives
\[
 p_a(u):=\mathbb P(T=u)
 =\frac{a!c!}{D(2t)\,u!\,2^{t-u}
                    ((a-u)/2)!((c-u)/2)!}
\]
when $0\le u\le a$ and $u\equiv a\pmod2$; the probability is
zero otherwise. For $a+2\le c$,
\[
 \frac{p_{a+2}(u)}{p_a(u)}
 =\frac{(a+2)(a+1)}{c(c-1)}\frac{c-u}{a+2-u}.
\]
This ratio increases with $u$ on the common support, and any new
support point occurs at its upper end. Hence the difference of the
two probability mass functions changes sign at most once, from
negative to positive. Since their total masses agree, their upper
tails are ordered: $T$ increases stochastically when $a$ is increased
by two towards $t$. Thus the minimum of $\mathbb E B(M,b)$ occurs at
one of the splits $(t,t)$ and $(t-1,t+1)$.

For an integer $j\ge0$ set
\[
 A_j(a)=\binom a{2j}\frac{D(2j)D(2t-2j)}{D(2t)}.
\]
This is $\mathbb E\binom Xj$, where $X$ is the number of pairs
contained in a specified class of size $a$. At the split $(t,t)$,
the numbers of monochromatic pairs in the two classes agree, so the
expectation is $2A_b(t)$. At the split $(t-1,t+1)$ these numbers are
$X$ and $X+1$, so the expectation is
$2A_b(t-1)+A_{b-1}(t-1)$. Factorial cancellation yields
\[
 2A_b(t-1)+A_{b-1}(t-1)-2A_b(t)
 =\frac{2b-1}{2t-2b+1}A_{b-1}(t-1)\ge0.
\]
The equality remains valid when one of the binomial coefficients
vanishes. This proves the assertion for even $\ell$.

Now suppose $\ell=2t+1$ and the class sizes are $a\le c-1$.
Recolor one fixed vertex $x$ of the larger class. In a uniform
maximum matching, $x$ is unmatched with probability $1/\ell$ and
is matched to each other specified vertex with probability $1/\ell$.
If it is matched to its own color, the number of monochromatic pairs
changes from $R+1$ to $R$; the residual class sizes are $(a,c-2)$.
If it is matched to the other color, that number changes from $R$ to
$R+1$; the residual sizes are $(a-1,c-1)$. It follows that, for every
integer $s$, the old upper tail minus the new upper tail is
\begin{equation}\label{red:odd-tail}
 \frac{(c-1)\mathbb P_{a,c-2}(R=s-1)
              -a\mathbb P_{a-1,c-1}(R=s-1)}{\ell}.
\end{equation}
Here the subscripts indicate the class sizes of a uniform maximum
matching on the $2t-1$ residual vertices.

To compare these probabilities, for a split $(d,f)$ of an odd number
$v$ the probability of $u$ bichromatic pairs is
\[
 \frac{d!f!}{v!!\,u!\,2^{(v-1)/2-u}
       ((d-u-\epsilon)/2)!((f-u-1+\epsilon)/2)!},
 \qquad \epsilon\equiv d-u\pmod2,\quad \epsilon\in\{0,1\}.
\]
Set $u=t-s$ and $\delta\equiv a-u\pmod2$, with
$\delta\in\{0,1\}$. For probabilities on the common support, this
formula gives
\[
 \frac{\mathbb P_{a,c-2}(R=s-1)}
      {\mathbb P_{a-1,c-1}(R=s-1)}
 =\begin{cases}
    a/(c-1),&\delta=1,\\[2pt]
    \displaystyle\frac a{c-1}\frac{c-1-u}{a-u},&\delta=0.
   \end{cases}
\]
Since $c-1\ge a$, \eqref{red:odd-tail} is nonnegative. At the
additional support point $u=a$, the second probability is zero;
when $a=0$ the assertion is immediate. Thus recoloring towards a
balanced split makes $M$ stochastically smaller. As $B(M,b)$
increases with $M$, its expectation is minimized at $(t,t+1)$.

It remains to evaluate the expectation at a balanced split. Let
$X,Y$ count the pairs contained in its two classes. Then $X=Y$ if
$\ell$ is even, and $|X-Y|\le1$ if $\ell$ is odd. Hence
\[
 B(M,b)=\binom Xb+\binom Yb.
\]
Every fixed set of $2b$ vertices is the union of $b$ pairs with the
same probability
\[
 p=\frac{\binom tb}{\binom\ell{2b}}:
\]
each matching has exactly $\binom tb$ such unions, and all
$2b$-sets are symmetric. Counting those unions contained in either
balanced class gives $\mathbb E B(M,b)=pB(\ell,2b)$. This proves
\eqref{red:matching-bound}.
\end{proof}

\begin{lemma}\label{lem:doubling}
For every $b\in\N$, the assertion $\mathcal C(b)$ implies
$\mathcal C(2b)$.
\end{lemma}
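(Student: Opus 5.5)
Let $G=C_{2b}$, $S\in\mathcal F(G)$ with $|S|=\ell$, and let $H=2G\cong C_b$ with quotient $\pi_2:G\to G/2G\cong C_2$. The plan is to pair terms by parity and push zero-sum subsequences down to $C_b$. Color each position of $S$ by $\pi_2(g_i)$, so there are two color classes. Put $t=\lfloor\ell/2\rfloor$. Take any maximum matching of the positions and let $P_1,\dots,P_M$ be its monochromatic pairs. For each such pair $\{i,j\}$, the sum $g_i+g_j$ lies in $H$. This gives a sequence $T=(g_{i_1}+g_{j_1})\cdot\ldots\cdot(g_{i_M}+g_{j_M})\in\mathcal F(H)$ of length $M$.

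Applying $\mathcal C(b)$ to $T$ (with $H\cong C_b$) yields at least $B(M,b)$ index sets $J\subseteq[1,M]$ with $|J|=b$ and $\sum_{k\in J}\sigma(P_k)=0$. Each such $J$ gives a set $\bigcup_{k\in J}P_k$ of $2b$ positions of $S$ whose sum is zero in $G$. So each zero-sum $2b$-subset $I\subseteq[1,\ell]$ is counted once for every fixed matching in which $I$ is a union of $b$ monochromatic matched pairs.

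Now average over a uniformly chosen maximum matching. Fix $I$ with $|I|=2b$. As in the proof of Lemma~\ref{red:matching}, $I$ is a union of $b$ pairs of the matching with probability exactly $p=\binom tb/\binom\ell{2b}$. If those pairs are not all monochromatic, $I$ is simply not counted, so the bound only gets weaker and remains valid. Hence
\[
 p\,\mathsf N_0^{2b}(S)\ \ge\ \mathbb E\bigl[\#\{\text{zero-sum unions of $b$ monochromatic pairs}\}\bigr]\ \ge\ \mathbb E\,B(M,b).
\]
Lemma~\ref{red:matching} gives $\mathbb E\,B(M,b)\ge B(\ell,2b)\,p$. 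Dividing by $p>0$ (valid whenever $b\le t$) yields $\mathsf N_0^{2b}(S)\ge B(\ell,2b)$. If $t<b$, then $\ell<2b\cdot 1+1$, and $B(\ell,2b)=0$ trivially. The case $b=1$ needs $\mathcal C(1)$, which is trivial because every sum vanishes in $C_1$.

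The only delicate point is the double counting. A zero-sum set $I$ may arise from several matchings, but the averaging over matchings already accounts for this: $\mathbb E[\#\{\text{counted } I\}]=\sum_I \mathbb P(I\text{ counted})\le p\,\mathsf N_0^{2b}(S)$. The real work is Lemma~\ref{red:matching}, which is already established, so no serious obstacle remains. What remains is to check that $\mathcal C(b)$ is applied to the correct subgroup $H\cong C_b$, and that the lower bound $\mathcal C(b)$ holds for every $M$, including $M<2b-1$, where it is vacuous.
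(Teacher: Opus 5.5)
Your proposal is correct and follows essentially the same route as the paper: you color positions by their image in $C_2$, take a uniformly random maximum matching, apply $\mathcal C(b)$ to the sums of the monochromatic pairs in $2C_{2b}\cong C_b$, and double count using $p=\binom{\lfloor\ell/2\rfloor}{b}/\binom{\ell}{2b}$ together with Lemma~\ref{red:matching}. Your handling of the degenerate cases ($t<b$, where $B(\ell,2b)=0$, and $b=1$) matches the paper's preliminary reduction to $\ell\ge 2b$.
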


\begin{proof}
Let $S\in\mathcal F(C_{2b})$, put $\ell=|S|$, and assume
$\ell\ge2b$, since otherwise the required bound is zero. Color
its positions according to their images in $C_2$, and choose a
uniform maximum matching. Retain its $M$ monochromatic pairs.
Each retained pair has sum in the subgroup $2C_{2b}\cong C_b$;
its image under this isomorphism will be called its label.
By $\mathcal C(b)$ the sequence of labels has at least $B(M,b)$
zero-sum subsequences of length $b$. Their unions are distinct
zero-sum sets of $2b$ positions of $S$.

A fixed set of $2b$ positions is the union of $b$ matching pairs
with probability $p=\binom{\lfloor\ell/2\rfloor}b/\binom\ell{2b}$.
The further requirement that all these pairs are monochromatic can
only decrease this probability. Thus, by double counting and
Lemma~\ref{red:matching},
\[
 p\mathsf N_0^{2b}(S)\ge\mathbb E B(M,b)\ge pB(\ell,2b).
\]
Division by $p>0$ proves the assertion.
\end{proof}

\section{Lifting from a large residue class}\label{red:large-section}

The next reduction applies whenever one class modulo a divisor of
the group order is sufficiently large. No primality assumption on
that divisor is required.

\begin{lemma}\label{lem:large-class}
Let $q\ge5$ and $b\ge25$ be integers, put $n=qb$, and suppose
$\mathcal C(b)$ holds. Let $S\in\mathcal F(C_n)$ have length
$\ell\ge34n/5$. If some class modulo $q$ contains at least
\[
 \left(\frac12+\frac{41}{100q}\right)\ell
\]
positions of $S$, then $\mathsf N_0^n(S)\ge B(\ell,n)$.
\end{lemma}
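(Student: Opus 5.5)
Let $X$ be the large class modulo $q$, with $a\ge(\frac12+\frac{41}{100q})\ell$ positions of $S$. After translating by an element of that class (translation preserves sums of $n$ terms), we may assume the class is $qC_n\cong C_b$, so the positions in $X$ carry labels in $C_b$. The idea is to lift zero-sum subsequences of length $b$ among these labels. Any $q$ pairwise disjoint zero-sum $b$-subsequences of $X$ have as union a zero-sum set of $n=qb$ positions. However, counting such $q$-tuples directly overcounts. So instead I would use the same random-partition double-counting device as in Lemma~\ref{lem:doubling}, but with the roles reversed.

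Concretely, I would partition $X$ uniformly at random into $q$ groups of nearly equal size, roughly $a/q$ each. Any $n$-subset of $X$ splits as $b$ positions in each group with probability $p=\prod\binom{|X_i|}{b}/\binom an$. Within each group I would apply $\mathcal C(b)$ to get at least $B(|X_i|,b)$ zero-sum $b$-subsets. Taking one from each group gives a zero-sum $n$-set, so
\[
 p\,\mathsf N_0^n(S)\ \ge\ \mathbb E\prod_i B(|X_i|,b)\ \ge\ \prod_i B(|X_i|,b).
\]
This reduces the claim to the purely numerical inequality
\[
 \binom an\prod_i\frac{B(|X_i|,b)}{\binom{|X_i|}{b}}\ \ge\ B(\ell,n).
\]
Since each $|X_i|>2b$ (because $a/q\ge 34b/10$), I would apply \eqref{red:normalized-ratio} with $r=b$ and $L=|X_i|$. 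Using that $E$ is increasing, this bounds the product from below by roughly $2^q\exp(qbE(a/(qb)))$, up to rounding of the group sizes. Together with $B(\ell,n)\le2\binom{\lceil\ell/2\rceil}{n}$ and \eqref{red:original-ratio} applied with $A=a/n$ and $N=\ell/(2n)$, it then suffices to show
\[
 n\bigl(F(A)-F(N)\bigr)+n\,E(A)+(q-1)\log2-F'(N)-\tfrac{1+1/N}{12n}-(\text{rounding})\ \ge\ 0,
\]
where $nE(A)=n(F(A/2)-F(A))$ with $A/2=a/(2n)$ interpreted per group. Combining gives $nF(A/2)$-type terms versus $nF(N)$, and the gain comes from $A\ge N(1+\frac{41}{50q})$.

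The main obstacle is this analytic inequality, uniformly in $q\ge5$, $b\ge25$ and $\ell\ge34n/5$. The per-group normalization costs about $qbE$, while the surplus of $a$ over $\ell/2$ gains about $n\cdot\frac{41}{50q}NF'(\cdot)$. Because the gain scales like $n/q=b$ and the loss also scales like $b$, the constant $41/100$ must be tuned, which is exactly what the certificates \eqref{red:large-constant} and \eqref{red:large-endpoint} are designed for. For instance, \eqref{red:large-constant} compares one group at $4947/2500$ against others near $4947/1250$, suggesting a worst case of $q=5$ at $N=17/5$. I would first reduce to the extreme case $\ell=34n/5$ and $a$ at the threshold by monotonicity in $N$, using that $E'$ is decreasing and $F$ is concave. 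Then I would check $q=5$ against the certificates, with the $b\ge25$ hypothesis absorbing the $1/(12n)$ and rounding errors, and the term $(q-1)\log 2$ handling larger $q$.
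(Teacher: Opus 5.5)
Your double counting is valid, but the bound it produces is exponentially too weak, so the analytic inequality you reduce to is false. For every $m$ one has $B(m,b)/\binom mb\le(1+1/m)2^{1-b}$, because every factor of $\binom{\lceil m/2\rceil}b/\binom mb$ except the first is at most $1/2$. Applying $\mathcal C(b)$ separately inside $q$ groups therefore costs a factor of about $2^{-(b-1)q}$, and your lower bound is at most about $2^{q-n}\binom an$.

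The surplus of the large class cannot pay for this. At the threshold, $\binom an/\binom{\lceil\ell/2\rceil}n$ is only $\exp(O(b))$, with a constant independent of $q$; for long $S$ it is about $(1+\frac{41}{50q})^{qb}<e^{41b/50}$. That absorbs one factor $2^{1-b}$, but not $q\ge5$ of them.

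In your own notation, since $F(A)+E(A)=F(A/2)$, your target inequality reads $n\bigl(F(A/2)-F(N)\bigr)+(q-1)\log2-\cdots\ge0$. But $A/2=a/(2n)<N$ whenever $a<\ell$. For $q=5$, $N=17/5$, $A=4947/1250$ one has $F(N)-F(A/2)>0.68$, so the left side is below $-3.4b+3<0$. Concretely, for $q=5$, $b=25$, $\ell=850$, $a=495$, your bound falls short of $B(\ell,n)$ by a factor of order $e^{80}$. Your reading of \eqref{red:large-constant} also does not match your construction: there only one term has argument $A/2$, whereas your scheme would put all $q$ terms at $A/2$.

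The missing idea is to let the factor $q$ enter through \emph{blocks}, with a single application of $\mathcal C(b)$. Put $L=\lfloor a/q\rfloor$, choose $qL$ positions of the class, and split them uniformly into $L$ blocks of size $q$. Any $q$ terms of one class modulo $q$ sum into $qC_n\cong C_b$, so no translation is needed. Applying $\mathcal C(b)$ to the $L$ block sums gives $B(L,b)$ zero-sum unions of $b$ blocks, each of size $n$. A fixed $n$-subset of the class is such a union with probability $\binom Lb/\binom an$. Hence $\mathsf N_0^n(S)\ge B(L,b)\binom an/\binom Lb$, with only one loss factor $B(L,b)/\binom Lb\approx2^{1-b}$.

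In entropy form the main term becomes $b\bigl(F(A/2)+(q-1)F(A)-qF(N)\bigr)$, which is exactly what \eqref{red:large-constant} bounds. Monotonicity in $A$, $N$ and $q$, followed by the endpoint check \eqref{red:large-endpoint} at $b=25$, then completes the argument.
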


\begin{proof}
Let $a$ be the size of the specified class and put $L=\lfloor a/q\rfloor$.
Choose $qL$ of its positions uniformly and partition them uniformly
into $L$ blocks of size $q$. Every block sum belongs to
$qC_n\cong C_b$. Applying $\mathcal C(b)$ to the resulting labels
gives at least $B(L,b)$ zero-sum unions of $b$ blocks. By symmetry,
every fixed $n$-set in the class is the union of $b$ blocks with
probability $\binom Lb/\binom an$. Consequently,
\begin{equation}\label{red:class-count}
 \mathsf N_0^n(S)\ge
 Z_*:=\frac{B(L,b)\binom an}{\binom Lb}.
\end{equation}

Set
\[
 N=\frac\ell{2n}\ge\frac{17}5,\qquad
 A=\frac an,\qquad \lambda=\frac Lb,\qquad
 \alpha=\frac{41}{50}.
\]
Then $A\ge(1+\alpha/q)N$ and $A-1/b<\lambda\le A$.
In particular $a\ge\lceil\ell/2\rceil$ and $L>2b$.
Lemma~\ref{red:binomial-estimates}, together with
$B(\ell,n)\le2\binom{\lceil\ell/2\rceil}n$, gives
\begin{equation}\label{red:class-entropy}
 \log\frac{Z_*}{B(\ell,n)}
 \ge bJ_q(A,N)-E'(\lambda)-F'(N)
                 -\frac{1+1/N}{12qb},
\end{equation}
where
\[
 J_q(A,N)=F(A/2)+(q-1)F(A)-qF(N).
\]
Here we used
$b(E(A)-E(\lambda))\le E'(\lambda)$, which follows from the
monotonicity of $E'$ and $A-\lambda<1/b$.

We claim that, throughout the present range,
\begin{equation}\label{red:uniform-margin}
 J_q(A,N)\ge F(4947/2500)+4F(4947/1250)-5F(17/5)
                      >\frac{21}{1000}.
\end{equation}
The function $J_q(A,N)$ increases with $A$. It is therefore enough
to consider
\[
 H_q(N)=F(rN/2)+(q-1)F(rN)-qF(N),
 \qquad r=1+\frac\alpha q.
\]
The convergent expansion
\[
 F(x)=\log x+1-\sum_{j\ge1}\frac1{j(j+1)x^j}
 \qquad(x>1)
\]
gives
\[
 H_q(N)=q\log r-\log2-
 \sum_{j\ge1}
 \frac{(2/r)^j+(q-1)(1/r)^j-q}{j(j+1)N^j}.
\]
Each numerator is positive. Indeed, the weighted mean of $2/r$
and $1/r$, with weights $1/q$ and $(q-1)/q$, equals
$(q+1)/(q+\alpha)>1$, and convexity of $x\mapsto x^j$ applies.
Thus $H_q(N)$ increases with $N$.

We next regard $q\ge5$ as real. Write $t=1/q$ and
$x=N+\alpha Nt$. The fundamental theorem of calculus gives
\[
 H_q(N)=E(x)+\alpha N\int_0^1 F'(N+\alpha Ntu)\,du.
\]
Its derivative with respect to $t$, divided by $\alpha N$, is
\[
 E'(x)+\alpha N\int_0^1uF''(N+\alpha Ntu)\,du
 \le\frac1{2x(x-2)}-\frac{\alpha N}{2x(x-1)}\le0.
\]
For the last inequality it suffices that
$x-1\le\alpha N(x-2)$. Since $x\ge N$ and $\alpha N>1$, this
follows from $N-1\le\alpha N(N-2)$; the latter holds at $N=17/5$
and its difference increases thereafter. Hence $H_q(N)$ increases
with $q$. Its minimum is therefore $H_5(17/5)$, which is the
expression in \eqref{red:uniform-margin}. Lemma~\ref{red:log-certificate}
proves its strict lower bound.

Finally, $\lambda>17/5-1/b$, and \eqref{red:class-entropy} yields
\[
 \log\frac{Z_*}{B(\ell,n)}
 >\frac{21b}{1000}-E'(17/5-1/b)-F'(17/5)
                       -\frac{1+5/17}{60b}.
\]
Every term on the right is increasing as a function of $b\ge25$.
At $b=25$ the right-hand side is the expression in
\eqref{red:large-endpoint}, and is greater than $7/100$.
It follows that $Z_*>B(\ell,n)$, and
\eqref{red:class-count} completes the proof.
\end{proof}

\section{The tripling reduction}\label{sec:tripling}

The passage from $C_b$ to $C_{3b}$ is the main step of the proof.
There are two kinds of triples whose sums vanish in $C_3$: triples
contained in one residue class and triples meeting all three classes.
The latter will be called transversal triples.  We combine the two
kinds, but vary their numbers before choosing the triples themselves.
The variation balances the probabilities with which different
subsequences are counted.  This is the reason for introducing inverse
binomial coefficients below.

Throughout this section, $b\ge35$, $n=3b$, and $\mathcal C(b)$ is
assumed.  Let $S\in\Fcal(C_{3b})$, let $\ell=|S|$, and let
$a\ge d\ge c\ge0$ be the sizes of its residue classes modulo $3$.
We work in the range
\begin{equation}\label{tri:range}
 \ell=a+d+c\ge\frac{102}{5}b.
\end{equation}
Permuting these three class sizes causes no difficulty: every
permutation of $C_3$ is affine, and both kinds of triples still have
sum zero in $C_3$.

\subsection{Binomial estimates}

For an integer $k\ge1$ and a real number $y>k-1$, put
\[
 f_k(y)=\binom yk=\frac{y(y-1)\cdots(y-k+1)}{k!},\qquad
 H_k(y)=\sum_{i=0}^{k-1}\frac1{y-i},\qquad
 \rho_k(y)=\frac{\sum_{i=0}^{k-1}(y-i)^{-2}}{H_k(y)^2}.
\]
Thus $(\log f_k)'=H_k$ and $H_k'=-\rho_kH_k^2$.
The function $f_k$ is increasing and convex on $[k-1,\infty)$,
and $\log f_k$ is concave on $(k-1,\infty)$.
We also use
\[
 F(x)=x\log x-(x-1)\log(x-1)\quad(x>1),\qquad F(1)=0.
\]
All logarithms in this section are natural.

\begin{lemma}\label{tri:analytic}
For $y>k$ one has
\begin{align}
 \log\frac{y+1}{y+1-k}&\le H_k(y)\le\log\frac y{y-k},
 &\frac1k&\le\rho_k(y)\le\frac{y}{k(y-k+1)}.\label{tri:Hr}
\end{align}
The function $\rho_k$ is decreasing.  Moreover,
\begin{align}
 f_k(y)&\ge\frac{\exp(kF(y/k)-1/(12k))}{\sqrt{2\pi k}},
 \label{tri:real-lower}\\
 f_k(Q)&\le\frac{\exp(kF(Q/k)+1/(12Q))}{\sqrt{2\pi k}}
             \sqrt{\frac Q{Q-k}}\quad(Q\in\N,\ Q>k).
 \label{tri:integer-upper}
\end{align}
For integers $M>k$,
\begin{equation}\label{tri:integer-lower}
 \log\binom Mk\ge kF(M/k)-\tfrac12\log(2\pi k)
       +\tfrac12F'(M/k)-\frac{1+1/(M/k-1)}{12k}.
\end{equation}
If $A\ge D>k$, then
\begin{equation}\label{tri:entropy-ratio}
 \frac{f_k(A)}{f_k(D)}\le
       \exp\bigl(k[F(A/k)-F(D/k)]\bigr).
\end{equation}
Finally, for an integer $L$ with $L/2>k-1$,
\begin{equation}\label{tri:parity}
 2f_k(L/2)\le B(L,k)
 \le2f_k(L/2)\exp\bigl(H_k(L/2)^2/8\bigr).
\end{equation}
\end{lemma}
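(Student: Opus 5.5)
The proof splits into six independent elementary estimates, taken in the order stated. Each reduces to comparing a sum with an integral, using monotonicity of $t\mapsto 1/t$ or $1/t^2$, or to the Robbins inequalities \eqref{red:stirling} already used in Lemma~\ref{red:binomial-estimates}.

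\emph{Bounds \eqref{tri:Hr}.} For $H_k$, we compare $\sum_{i=0}^{k-1}1/(y-i)$ with integrals of $1/t$. Since $1/t$ is decreasing, $1/(y-i)\le\int_{y-i-1}^{y-i}dt/t$, which gives the upper bound $\log(y/(y-k))$. Similarly $1/(y-i)\ge\int_{y-i}^{y-i+1}dt/t$, which gives the lower bound $\log((y+1)/(y+1-k))$. For $\rho_k$:
\begin{itemize}
\item The lower bound $1/k$ is Cauchy--Schwarz, $H_k^2\le k\sum(y-i)^{-2}$.
\item For the upper bound we use $\sum(y-i)^{-2}\le\int_{y-k}^{y}dt/t^2\cdot(\text{correction})$. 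It is cleaner to use $(y-i)^{-2}\le 1/((y-i-\tfrac12)(y-i+\tfrac12))$, or simply $(y-i)^{-2}\le 1/((y-i)(y-i-1))$, which telescopes to $\frac1{y-k}-\frac1y$. Together with $H_k\ge k/y$ (each term is at least $1/y$), this has the right shape. To reach exactly $y/(k(y-k+1))$ we instead bound $\sum(y-i)^{-2}\le \frac{1}{y-k+1}\sum\frac1{y-i}=H_k/(y-k+1)$ and $H_k\ge k/y$. Then $\rho_k\le 1/((y-k+1)H_k)\le y/(k(y-k+1))$, which is exactly the claim.
\end{itemize}

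\emph{Monotonicity of $\rho_k$.} Write $x_i=1/(y-i)$, so $\rho_k=\sum x_i^2/(\sum x_i)^2$ and $dx_i/dy=-x_i^2$. Differentiating,
\[
\frac{d\rho_k}{dy}\,\Big(\sum x_i\Big)^3=-2\sum x_i^3\sum x_i+2\Big(\sum x_i^2\Big)^2 .
\]
This is $\le0$ by Cauchy--Schwarz, since $(\sum x_i^2)^2\le\sum x_i^3\sum x_i$.

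\emph{Inequalities \eqref{tri:real-lower} and \eqref{tri:integer-upper}.} These are the content of part (1) of Lemma~\ref{red:binomial-estimates}. There $R_k(y)=f_k(y)e^{-kF(y/k)}$ is decreasing with limit $k^ke^{-k}/k!$, so $R_k(y)\ge k^ke^{-k}/k!$; Robbins' upper bound for $k!$ then gives \eqref{tri:real-lower}. For \eqref{tri:integer-upper} we write $\binom Qk=Q!/(k!(Q-k)!)$ and apply Robbins to each factorial, exactly as in the proof of part (2) of that lemma. The exponent assembles to $kF(Q/k)$ via the identity
\[
Q\log Q-k\log k-(Q-k)\log(Q-k)=kF(Q/k)
\]
up to the $\log k$ bookkeeping, the square roots give $\sqrt{Q/(2\pi k(Q-k))}$, and the error terms are bounded by $e^{1/(12Q)}$. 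This last step is where care is needed: we drop the negative contributions $-1/(12k+1)$ and $-1/(12(Q-k)+1)$ and keep only $1/(12Q)$.

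\emph{Inequality \eqref{tri:integer-lower}.} We use Robbins in the opposite directions. The exponent again is $kF(M/k)$. The square-root factor is $\sqrt{M/(2\pi k(M-k))}$, and
\[
\tfrac12\log\frac{M}{M-k}=\tfrac12F'(M/k).
\]
The errors are at most $1/(12k)+1/(12(M-k))=\frac{1+1/(M/k-1)}{12k}$.

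\emph{Inequality \eqref{tri:entropy-ratio}.} This follows from monotonicity of $R_k$: since $A\ge D$, $R_k(A)\le R_k(D)$, which is the claim.

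\emph{Inequality \eqref{tri:parity}.} The lower bound is convexity of $f_k$:
\[
f_k(\lfloor L/2\rfloor)+f_k(\lceil L/2\rceil)\ge2f_k(L/2).
\]
For the upper bound, only odd $L$ matters, since for even $L$ we have equality. There $B/2f_k(L/2)=\tfrac12(e^{u}+e^{v})$ with $u=\log f_k(y+\tfrac12)-\log f_k(y)$, $v=\log f_k(y-\tfrac12)-\log f_k(y)$ and $y=L/2$. By concavity of $\log f_k$, $u\le H/2$ and $v\le-H/2$ with $H=H_k(y)$. Since $(\log f_k)''=-\rho_kH_k^2\le0$ and the pair $(u,v)$ can be controlled by the tangent line, we obtain $\tfrac12(e^u+e^v)\le\tfrac12(e^{H/2}+e^{-H/2})=\cosh(H/2)\le e^{H^2/8}$. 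The delicate point is justifying that replacing $(u,v)$ by $(H/2,-H/2)$ does not decrease $e^u+e^v$. If $v$ is much more negative, this needs $e^u+e^v\le e^{H/2}+e^{-H/2}$ with $u\le H/2$, $v\le -H/2$; monotonicity in each variable gives it directly. This step is the one I expect to be the main obstacle, and it is resolved by that monotonicity remark.
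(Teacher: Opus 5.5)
Your proposal is correct and follows the paper's proof essentially step by step. The shared steps are integral comparisons for $H_k$; Cauchy's inequality together with $\sum u_i^2\le(\max u_i)\sum u_i$ and $H_k\ge k/y$ for the bounds on $\rho_k$; the sign of $S_2^2-S_1S_3$ for monotonicity of $\rho_k$; monotonicity of $R_k$ plus Robbins' factorial bounds for the Stirling-type estimates and \eqref{tri:entropy-ratio}; and convexity, the tangent inequalities for $\log f_k$, and $\cosh t\le e^{t^2/2}$ for \eqref{tri:parity}. You spell out the Robbins bookkeeping more explicitly than the paper does. Note that $Q\log Q-k\log k-(Q-k)\log(Q-k)=kF(Q/k)$ holds exactly, so no $\log k$ correction is needed, and the exploratory false starts before your final bound on $\rho_k$ can simply be deleted.
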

\begin{proof}
The two bounds for $H_k$ are integral comparisons.  Cauchy's
inequality gives $\rho_k\ge1/k$.  If $u_i=(y-i)^{-1}$, then
$\sum u_i^2\le(\max u_i)\sum u_i$ and
$\sum u_i\ge k/y$, proving the upper bound.  Writing
$S_j=\sum_i(y-i)^{-j}$, differentiation shows that $\rho_k'$ has
the sign of $S_2^2-S_1S_3$, which is nonpositive by Cauchy's
inequality.

The function
\[
 R_k(y)=f_k(y)\exp(-kF(y/k))
\]
is decreasing, since $(\log R_k)'=H_k(y)-\log(y/(y-k))\le0$.
Its limit at infinity is $k^ke^{-k}/k!$.  The factorial bounds
of Robbins \cite{Robbins1955}
\[
 \sqrt{2\pi r}(r/e)^r\le r!
 \le\sqrt{2\pi r}(r/e)^r e^{1/(12r)}\qquad(r\in\N)
\]
give \eqref{tri:real-lower}--\eqref{tri:integer-lower}; monotonicity
of $R_k$ gives \eqref{tri:entropy-ratio}.  Convexity gives the
first inequality in \eqref{tri:parity}.  For odd $L$, the tangent
inequalities for $\log f_k$ at $L/2$ give
$B(L,k)\le2f_k(L/2)\cosh(H_k(L/2)/2)$.  Now use
$\log\cosh t\le t^2/2$.  The even case is immediate.
\end{proof}

We record explicitly how the few numerical constants used below
are obtained.  This also permits all the estimates to be read as
exact inequalities, rather than numerical approximations.  For
$1\le x\le2$, set
\begin{equation}\label{tri:log-certificate}
 L_{120}(x)=2\sum_{j=0}^{119}\frac{z^{2j+1}}{2j+1},\qquad
 z=\frac{x-1}{x+1}.
\end{equation}
The geometric series, integrated term by term, gives
\[
 L_{120}(x)\le\log x\le L_{120}(x)
       +\frac{2z^{241}}{241(1-z^2)}.
\]
For an arbitrary positive rational $x$, first write $x=2^ru$ with
$1\le u<2$ and use $\log x=r\log2+\log u$.  Thus these bounds
involve rational arithmetic only.  In subsequent displays, the
following strict inequalities are used.  Each follows by substituting
\eqref{tri:log-certificate}; the last column is a rational bound on
the indicated expression.
\begin{equation}\label{tri:constant-table}
\begin{array}{c|c}
 \text{expression}&\text{bound}\\ \hline
 2F(13/3)-3F(2)&>0\\
 3F(12/5)-2F(22/5)&>0\\
 4F(68/25)-3F(17/5)&>0\\
 F(11/5)-\tfrac12F(22/5)
       -\dfrac{F(22/5)^2}{48(\log2)^2}&>191/2000\\
 F(51/50)+F(51/5)+F(204/25)-3F(17/5)&>1/5\\
 F(51/40)+F(51/5)-2F(51/20)&>1/2\\
 F(272/125)+2F(544/125)-3F(17/5)&>27/2000\\
 \log(19/13)&<19/50\\
 \log6&<9/5
\end{array}
\end{equation}

\begin{lemma}\label{tri:rounding}
Let $x\ge k\ge b$, where $k\in\N$ and $x\in[k,k+1]$.
There is a random variable $U\in\{k,k+1\}$ such that
$\mathbb E f_b(U)=f_b(x)$.  It satisfies
\begin{equation}\label{tri:round-loss}
 0\le x-\mathbb E U\le\frac{b-1}{8(k-b+1)}.
\end{equation}
More generally, the right side may be replaced by $M/8$ whenever
$(\log f_b')'\le M$ on $[k,k+1]$.  If $w\in\N$ and $b\le w\le x$,
then
\begin{equation}\label{tri:interpolation}
 \mathbb E B(U+w,b)\ge2\binom{(x+w)/2}{b}.
\end{equation}
\end{lemma}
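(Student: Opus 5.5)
The random variable is forced by the mean condition, so I will construct it directly. Put $\theta=x-k\in[0,1]$ and let $U=k+1$ with probability
\[
 p=\frac{f_b(x)-f_b(k)}{f_b(k+1)-f_b(k)},
\]
and $U=k$ otherwise. The denominator equals $\binom{k}{b-1}>0$ since $k\ge b$. Monotonicity of $f_b$ gives $p\in[0,1]$, and by construction $\mathbb E f_b(U)=f_b(x)$. Moreover $\mathbb E U=k+p$.

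For \eqref{tri:round-loss} I will compare $p$ with $\theta$. Convexity of $f_b$ on $[k,k+1]$ puts $f_b(x)$ below the chord, so $p\le\theta$, which gives $x-\mathbb E U=\theta-p\ge0$.

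For the upper bound I will write $\phi=\log f_b'$, with $f_b'>0$ on $[k,k+1]$ because $k\ge b$, and assume $\phi'\le M$. Then
\[
 p=\frac{\int_0^\theta f_b'(k+s)\,ds}{\int_0^1 f_b'(k+s)\,ds}.
\]
The bound $\theta-p\le M/8$ is the step I expect to be the main obstacle. My plan is to show that the extremal case is the exponential density $f_b'(k+s)=Ce^{Ms}$, and there to estimate
\[
 \theta-\frac{e^{M\theta}-1}{e^M-1}.
\]
This quantity is at most $\frac{M}{2}\theta(1-\theta)\le M/8$; one checks this by concavity in $\theta$ together with the elementary inequality at $\theta=1/2$, or by a second-order Taylor bound.

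To justify the reduction I will use a comparison argument. The bound $\phi'\le M$ says that $g(s)=f_b'(k+s)e^{-Ms}$ is nonincreasing. I will then rewrite $p$ as a mixture over level sets of $g$ of ratios of exponential integrals over intervals $[0,s_0]$. Each such ratio is at least the corresponding exponential ratio, since truncating the right end only helps. An alternative I would try first is the direct route: the function $h(\theta)=\theta-p(\theta)$ vanishes at $0$ and $1$ and satisfies $h''=-f_b''/\binom{k}{b-1}$, and I would bound $h$ through the log-derivative.

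For the explicit constant I then need $M$ for $f_b'$. Using $f_b'=f_bH_b$,
\[
 (\log f_b')'=H_b-\rho_bH_b\le H_b(1-\rho_b).
\]
From \eqref{tri:Hr} this is bounded by quantities like $\log\frac{y}{y-b}$. Instead I will use $f_b'=\binom{y}{b-1}\cdot$(a positive sum), and estimate the derivative of its logarithm crudely by $\sum_i 1/(y-i)$ over the relevant range. I aim to get $(b-1)/(k-b+1)$; if the direct estimate is off, I will fall back on $\frac{d}{dy}\log\binom{y}{b-1}$ evaluated from $y=k$.

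For \eqref{tri:interpolation} I will use $B(m,b)\ge 2f_b(m/2)$ from \eqref{tri:parity}, giving
\[
 \mathbb E B(U+w,b)\ge 2\,\mathbb E f_b\bigl((U+w)/2\bigr).
\]
It then suffices to show $\mathbb E g(U)\ge g(x)$ for $g(u)=f_b((u+w)/2)$, given that $\mathbb E f_b(U)=f_b(x)$. Since $f_b$ is increasing, I can write $g=\psi\circ f_b$ with $\psi=g\circ f_b^{-1}$. Jensen's inequality on the two-point law of $U$ then finishes provided $\psi$ is convex on $[f_b(k),f_b(k+1)]$. Equivalently, I need $g'/f_b'$ to be nondecreasing, that is,
\[
 \frac{f_b'\bigl((u+w)/2\bigr)}{2f_b'(u)}
\]
nondecreasing in $u$. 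This holds when $\log f_b'$ is concave and $(u+w)/2\le u$, which is exactly where $w\le x$ enters. I will verify the concavity of $\log f_b'$ for arguments at least $b$ from the representation of $f_b'$ as $f_b$ times $H_b$; this is a second potential difficulty.
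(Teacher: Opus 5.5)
\textbf{The main gap is in your proof of \eqref{tri:interpolation}.} After using $B(m,b)\ge2f_b(m/2)$ you need $\mathbb E f_b((U+w)/2)\ge f_b((x+w)/2)$, and this inequality is false.

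You correctly reduce it to monotonicity of $f_b'((u+w)/2)/f_b'(u)$, but log-concavity does not deliver that. The logarithmic derivative of this ratio is $\tfrac12\phi'((u+w)/2)-\phi'(u)$, with $\phi=\log f_b'$. Concavity of $\phi$ only gives $\phi'((u+w)/2)\ge\phi'(u)$, without the factor $\tfrac12$. In fact the ratio decreases: for $b=2$ it is exactly $(u+w-1)/(2u-1)$, and for large arguments it behaves like $((u+w)/(2u))^{b-1}$. So $\psi$ is concave and Jensen runs the wrong way.

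A concrete counterexample: take $b=w=2$, $k=10$, $x=21/2$. Then $\Pr(U=11)=39/80$, and $\mathbb E f_2((U+2)/2)=\tfrac{10497}{640}<\tfrac{10500}{640}=f_2(25/4)$. More generally, for $b=2$ one computes $2\mathbb E f_2((U+w)/2)-2f_2((x+w)/2)=-(2w-1)(x-\mathbb E U)/4<0$ whenever $0<x-k<1$.

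The lemma survives only because of the parity surplus you discarded: $B(2j+1,b)=f_b(j)+f_b(j+1)>2f_b(j+\tfrac12)$. In the example, $\mathbb E B(U+2,2)=\tfrac{1317}{40}>\tfrac{525}{16}=2f_2(25/4)$. So you must keep the exact values $B(2j,b)=2f_b(j)$ and $B(2j+1,b)=f_b(j)+f_b(j+1)$.

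Put $u=\lfloor x\rfloor$, $t=x-u$, $j=\lfloor(u+w)/2\rfloor$, and $A_v(s)=(f_b(v+s)-f_b(v))/(f_b(v+1)-f_b(v))$. The claim becomes $2A_j(t/2)\le A_u(t)$ if $u+w$ is even, and $2[1-A_j((1+t)/2)]\ge1-A_u(t)$ if it is odd. Log-concavity of $f_b'$ above $b-1$ is then used only as a \emph{unit shift} comparison, with no rescaling: since $j\le u$, the ratio $f_b'(j+s)/f_b'(u+s)$ increases, so $A_j\le A_u$. The halving is absorbed by convexity of $A_u$ together with $A_u(0)=0$ and $A_u(1)=1$. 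This is the paper's argument.

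\textbf{There is also a false step in the rounding loss.} The inequality $\theta-\frac{e^{M\theta}-1}{e^M-1}\le\frac M2\theta(1-\theta)$ fails near $\theta=1$ for every $M>0$. Both sides vanish at $\theta=1$, but there the left side has slope $1-\frac{Me^M}{e^M-1}=-\frac M2-\bigl(\frac M2\coth\frac M2-1\bigr)<-\frac M2$. For example, $M=2$, $\theta=0.9$ gives $0.1096>0.09$. Checking $\theta=1/2$ together with concavity does not locate the maximum either, since the maximum sits at $\theta^*=\frac1M\log\frac{e^M-1}M>\frac12$.

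The bound $M/8$ is nevertheless true. Evaluate at $\theta^*$ to get $\frac{m\coth m-1+\log(\sinh m/m)}{2m}$ with $m=M/2$, then use $\coth m-1/m\le m/3$ and $\log(\sinh m/m)\le m^2/6$. Your reduction to the exponential density via the nonincreasing $f_b'(k+s)e^{-Ms}$ is fine and matches the paper.

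For the constant $M=(b-1)/(k-b+1)$, your first route already works. Combine $\rho_b\ge1/b$ with the termwise bound $H_b(y)\le b/(y-b+1)$ to get $H_b(1-\rho_b)\le(b-1)/(y-b+1)$.
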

\begin{proof}
Take $\Pr(U=k+1)=(f_b(x)-f_b(k))/(f_b(k+1)-f_b(k))$ and put
$t=x-k$.  Each product occurring in $f_b'$ has logarithmic
derivative at most $(b-1)/(k-b+1)$.  If $(\log f_b')'\le M$,
the quotient $f_b'(k+s)e^{-Ms}$ is decreasing.  Comparing its
integrals on $[0,t]$ and $[t,1]$ gives
\[
 \Pr(U=k+1)\ge\frac{e^{Mt}-1}{e^M-1}.
\]
Convexity gives $\Pr(U=k+1)\le t$.  The maximum of
$t-(e^{Mt}-1)/(e^M-1)$, with $h=M/2$, is
\[
 \frac{h\coth h-1+\log(\sinh h/h)}{2h}\le\frac h4=\frac M8.
\]
Indeed $\coth h-1/h\le h/3$, as follows by comparing the power
series of $(1+h^2/3)\sinh h$ and $h\cosh h$; integration gives
$\log(\sinh h/h)\le h^2/6$.

For \eqref{tri:interpolation}, write $u=\lfloor x\rfloor$,
$t=x-u$, $j=\lfloor(u+w)/2\rfloor$, and
\[
 A_v(s)=\frac{f_b(v+s)-f_b(v)}{f_b(v+1)-f_b(v)}
       \quad(0\le s\le1).
\]
All roots of $f_b'$ are real and at most $b-1$, by Rolle's
theorem.  Thus $f_b'$ is positive and log-concave above $b-1$.
Since $j\le u$, the ratio $f_b'(j+s)/f_b'(u+s)$ is increasing,
and comparison of normalized integrals gives $A_j(s)\le A_u(s)$.
Also $A_u$ is convex.  When $u+w$ is even,
\eqref{tri:interpolation} is equivalent to
$2A_j(t/2)\le A_u(t)$.  When $u+w$ is odd, it is equivalent to
$2[1-A_j((1+t)/2)]\ge1-A_u(t)$.  Both follow from the preceding
comparison and convexity.
\end{proof}

\begin{lemma}\label{tri:midpoint}
Suppose $d>6b$, $a\ge d$, $x>b+1$, and
$\binom xb=\binom a{3b}/\binom d{3b}$.  Then
\begin{equation}\label{tri:midpoint-gap}
 \log\binom{(x+b)/2}{b}
 -\log\frac{\binom{(a+d)/2}{3b}}{\binom d{3b}}
 \ge\Delta_b:=\frac1{48}-\frac1{768b}.
\end{equation}
\end{lemma}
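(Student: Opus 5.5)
The plan is to reduce \eqref{tri:midpoint-gap} to a convexity statement about one composite function, and then quantify that convexity. Put $\varphi=\log f_b$ and $g=\log f_{3b}$; both are increasing and concave on the relevant ranges, with $\varphi'=H_b$, $g'=H_{3b}$, $\varphi''=-\rho_bH_b^2$ and $g''=-\rho_{3b}H_{3b}^2$. Define
\[
 \Psi(y)=\varphi^{-1}\bigl(g(y)-g(d)\bigr)\qquad(y\ge d),
\]
so that $\Psi(d)=b$ and, by hypothesis, $\Psi(a)=x$. The left side of \eqref{tri:midpoint-gap} equals
\[
 \varphi\Bigl(\frac{\Psi(a)+\Psi(d)}2\Bigr)-\varphi\Bigl(\Psi\Bigl(\frac{a+d}2\Bigr)\Bigr).
\]
So it suffices to show that $\Psi$ is convex with a quantified defect at the midpoint, and then to transfer that defect through $\varphi$.

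\emph{Step 1 (convexity).} Differentiating gives $\Psi'=g'(y)/\varphi'(\Psi)$, and a direct computation gives
\[
 \Psi''=\frac{g'(y)^2}{\varphi'(\Psi)}\Bigl(\frac{-g''}{g'^2}\cdot(-1)\cdot(-1)+\frac{-\varphi''}{\varphi'^2}\Bigr)
 =\frac{g'(y)^2}{\varphi'(\Psi)}\bigl(\rho_b(\Psi)-\rho_{3b}(y)\bigr).
\]
By \eqref{tri:Hr}, $\rho_b\ge 1/b$. Also $\rho_{3b}$ is decreasing and $y\ge d>6b$, so
\[
 \rho_{3b}(y)\le\frac{6b}{3b(3b+1)}<\frac2{3b}.
\]
Hence $\rho_b(\Psi)-\rho_{3b}(y)>1/(3b)$. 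In particular $\Psi$ is strictly convex, which already gives the gap $\ge0$.

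\emph{Step 2 (quantifying).} To obtain a positive gap, I would rather work in the parameter $t=g(y)-g(d)$, because there the two rescalings separate. Write $X(t)=\varphi^{-1}(t)$ and $Y(t)=g^{-1}(g(d)+t)$. Then $(\log X')'=\rho_b(X)\,H_b(X)\,X'$-type expressions appear, and the ratio $\rho_b/\rho_{3b}\ge 3/2$ says that $X$ bends at least $3/2$ times as fast as $Y$ on the logarithmic scale of its own derivative. I would estimate the midpoint defect $\tfrac12(\Psi(a)+b)-\Psi(\tfrac{a+d}2)$ from below by integrating $\Psi''$ against the tent kernel on $[d,a]$. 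Multiplying by $\varphi'$ at the midpoint, and using the mean value theorem on the concave $\varphi$ (taking its derivative at the larger point $(x+b)/2$), converts this into a lower bound for the log gap. The hypothesis $x>b+1$ rules out the degenerate regime $a\approx d$, where the gap tends to $0$. Near $x=b+1$ the gap is of constant order, since $\varphi$ is sharply concave just above $b$. For large $x$, the factor $1/(3b)$ multiplies quantities of size $b\,H^2(a-d)^2$, which again gives a constant.

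\emph{Main obstacle.} I expect the hard part to be extracting the explicit constant $\Delta_b=\frac1{48}-\frac1{768b}$ uniformly in both regimes. The form of $\Delta_b$ suggests a quadratic bound of the type $t^2/8$ (as in the $\cosh$ step of \eqref{tri:parity}) with $t\ge1/\sqrt6$-sized increments, plus a $1/b$ correction coming from the $+1$ in $\rho_{3b}\le y/(3b(y-3b+1))$. I would first try to show that the gap, viewed as a function of $a$ with $d$ fixed, is nondecreasing once $x\ge b+1$. This should follow from the sign of $\Psi''$ together with $H_b((x+b)/2)\ge H_b(x)$. It would then suffice to bound the gap at the endpoint $x=b+1$. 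There $\binom{b+1/2}{b}$ and $\frac12\log(b+1)$ can be compared directly via \eqref{tri:Hr}, giving roughly $\log(2/\sqrt\pi)-O(1/b)$, which exceeds $1/48$ comfortably. If monotonicity fails, the fallback is to split into the cases $x\le 2b$ and $x>2b$, and to use the integrated curvature estimate in each case separately.
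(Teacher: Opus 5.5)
Your reformulation and Step~1 are correct. With $\Psi(y)=\varphi^{-1}(g(y)-g(d))$, the left side of \eqref{tri:midpoint-gap} is $\varphi\bigl(\tfrac{x+b}2\bigr)-\varphi\bigl(\Psi(\tfrac{a+d}2)\bigr)$. Also $\Psi''=\frac{(g')^2}{\varphi'(\Psi)}\bigl(\rho_b(\Psi)-\rho_{3b}(y)\bigr)>0$, so the gap is nonnegative. But the whole content of the lemma is the explicit constant, and Step~2 does not produce it.

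If you feed the uniform curvature gap $\rho_b-\rho_{3b}>1/(3b)$ from Step~1 into the tent-kernel estimate, you get only about $W^2/(24b)$, where $W=\log\binom xb$. For $x$ near $b+1$, so $W\approx\log(b+1)$, this is already below $1/48$ at $b=35$ (roughly $0.015$). It also tends to $0$ as $b\to\infty$. For small $W$ you need a quantitative form of your remark that $\varphi$ is sharply concave just above $b$. Since $\rho_b$ is decreasing, $\rho_b\ge\rho_b(x)$ on all of $[b,x]$. With $h=x-b>1$ one has $W\ge hH_b(x)$, hence $W^2\rho_b(x)\ge\sum_{j=1}^b h^2/(h+j)^2>3/5$. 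The bound $\rho_b\ge 1/b$ suffices only when $W^2\ge b/2$.

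Your main route, monotonicity of the gap in $a$ followed by evaluation at $x=b+1$, is not supported by the facts you cite. Writing $G(a)$ for the gap,
\[
G'(a)=\tfrac12\bigl[g'(a)H_b(\tfrac{x+b}2)/H_b(x)-g'(\tfrac{a+d}2)\bigr],
\]
so you need $H_b(\tfrac{x+b}2)/H_b(x)\ge H_{3b}(\tfrac{a+d}2)/H_{3b}(a)$. The inequality $H_b(\tfrac{x+b}2)\ge H_b(x)$ and the concavity of $g$ push in opposite directions. Convexity of $\Psi$ yields only $g'(\tfrac{a+d}2)\le g'(a)\varphi'(\Psi(\tfrac{a+d}2))/\varphi'(x)$, and since $\varphi'(\Psi(\tfrac{a+d}2))\ge\varphi'(\tfrac{x+b}2)$, this is the wrong direction. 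So the monotonicity is an unproved claim. Even at the endpoint you need an upper bound for $g(\tfrac{a+d}2)-g(d)$, whereas concavity gives only the lower bound $\tfrac12\log(b+1)$.

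The paper handles all of this with one device. Reparametrize by $s\in[0,W]$, the value of $\log f_k$ measured from its starting point ($b$, respectively $d$). The derivatives of the two inverse functions are then positive densities on $[0,W]$ with logarithmic derivatives $\rho_b$ and $\rho_{3b}$. The two logarithms in \eqref{tri:midpoint-gap} are exactly their medians. A density with $(\log p)'\ge\alpha$ (respectively $\le\beta$) has median at least (respectively at most) $L_\alpha(W)=\alpha^{-1}\log\frac{1+e^{\alpha W}}2$. Hence the gap is at least $L_\alpha(W)-L_{2/(3b)}(W)$, with $\alpha=1/b$ if $W^2\ge b/2$ and $\alpha=3/(5W^2)$ if $W^2\le b/2$. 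In the first case, monotonicity in $W$ reduces to $W^2=b/2$, and the $\log\cosh$ bounds at $z=1/\sqrt{2b}$ give exactly $\Delta_b$. The same bounds give more than $1/48$ in the second case.
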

\begin{proof}
Put $W=\log\binom xb>\log(b+1)$ and
$L_\alpha(W)=\alpha^{-1}\log((1+e^{\alpha W})/2)$.
The logarithmic derivative of the derivative of the inverse of
$\log f_k$ is $\rho_k$.  A positive density $p$ on $[0,W]$
with $(\log p)'\ge\alpha$ has its median at least
$L_\alpha(W)$: divide by $e^{\alpha s}$ and compare the
integrals before and after the median.  The reverse derivative
bound reverses the conclusion.

The two logarithms in \eqref{tri:midpoint-gap} are the median
coordinates of the derivatives of the corresponding inverse
functions.  By \eqref{tri:Hr}, $\rho_{3b}(y)<\beta:=2/(3b)$
for $y\ge d$, and $\rho_b\ge1/b$.
If $W^2\ge b/2$, monotonicity in $W$ of
$L_{1/b}(W)-L_\beta(W)$ and
\[
 t^2/2-t^4/12\le\log\cosh t\le t^2/2
\]
give, on setting $z=1/\sqrt{2b}$,
\[
 L_{1/b}(W)-L_\beta(W)
 \ge b\{\log\cosh(z/2)-\tfrac32\log\cosh(z/3)\}
 \ge\Delta_b.
\]
If $W^2\le b/2$, put $h=x-b>1$.  Since
\[
 W\ge h\sum_{j=1}^b\frac1{h+j},\qquad
 W^2\rho_b(x)\ge\sum_{j=1}^b\frac{h^2}{(h+j)^2}
 \ge\sum_{j=1}^b\frac1{(j+1)^2}>\frac35,
\]
we may use $\alpha=3/(5W^2)$ on the entire interval, because
$\rho_b$ decreases.  The last strict inequality follows by
retaining $1/4+1/9+1/16+1/25$ and bounding the remaining terms
below by $\int_6^{b+2}t^{-2}\,dt$.  The same bounds for
$\log\cosh$ now give
\[
 L_\alpha(W)-L_\beta(W)
 \ge\frac{3/5-\beta W^2}{8}-\frac{(3/5)^3}{192W^2}
 >\frac1{30}-\frac1{8000}>\frac1{48}.
\]
Here $\log(b+1)\ge\log36>3$.  This proves the lemma.
\end{proof}

\subsection{Counting unions of triples}

For nonnegative integers $r,u,v$, choose uniformly a family of
disjoint triples consisting of $r$ transversal triples, $u$ triples
from the class of size $a$, and $v$ triples from the class of size
$d$.  Such a family exists precisely when
\begin{equation}\label{tri:capacity}
 r\le c,\qquad r+3u\le a,\qquad r+3v\le d.
\end{equation}
The sum of each triple belongs to $3C_{3b}\cong C_b$.
Applying $\mathcal C(b)$ to these sums supplies at least
$B(r+u+v,b)$ zero-sum unions of $b$ triples.  Each such union is
a subsequence of $S$ of length $3b$, with positions distinguished.

\begin{lemma}\label{tri:hybrid}
Assume $a,d\ge3b$, $c>0$, and that each of $u,v$ is either zero
or at least $b$.  Put
\[
 \kappa^b=\frac{(3b)!}{(b!)^2(6b)^b},\quad
 R=\frac1{\binom ab\binom db},\quad
 A=\frac{\binom ub}{\binom a{3b}},\quad
 D=\frac{\binom vb}{\binom d{3b}}.
\]
For $j+h+s=b$, the probability that a fixed set of positions with
class sizes $(3j+s,3h+s,s)$ is a union of retained triples is at most
\begin{equation}\label{tri:hybrid-prob}
 A^{j/b}D^{h/b}\bigl(\kappa^bR(r/c)^b\bigr)^{s/b}.
\end{equation}
Consequently, a finite mixture of feasible triples $(r,u,v)$ with
\begin{equation}\label{tri:balanced-mean}
 \mathbb E A=\mathbb E D=\alpha>0,\qquad
 \kappa^bR\,\mathbb E(r/c)^b\le\alpha
\end{equation}
satisfies
\begin{equation}\label{tri:master-count}
 \mathsf N_0^{3b}(S)\ge
       \frac{\mathbb E B(r+u+v,b)}{\alpha}.
\end{equation}
For a single type, the same conclusion holds with denominator
$\max\{A,D,\kappa^bR(r/c)^b\}$.  If $r=0$, the third endpoint
is omitted; in this form the assertion also applies when $c=0$.
\end{lemma}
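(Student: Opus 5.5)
The plan has two parts: bound the probability of a fixed configuration, then double count.

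\emph{Probability bound.} Fix a set $P$ of $3b$ positions with class sizes $(3j+s,3h+s,s)$, where $j+h+s=b$. The event that $P$ is a union of $b$ retained triples forces a specific structure. Exactly $s$ of the triples in $P$ are transversal, $j$ lie in the class of size $a$, and $h$ lie in the class of size $d$. This is because each transversal triple uses one position of the smallest class, while the other triples use none of it.

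I will compute the probability by sequential selection. Since the triples are chosen uniformly, the events factor into three parts.

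\begin{enumerate}
\item The $s$ positions of $P$ in the small class must be covered by transversal triples. The other two elements of each such triple must land in the prescribed sets of $P$, and the pairing must be consistent. Counting the matchings of $s$ elements from each class into triples gives probability at most
\[
 \frac{\binom rs}{\binom cs}\cdot\frac{(s!)^2}{(a)_s(d)_s}\cdot(\text{number of consistent pairings}).
\]
\item The $3j$ remaining positions in the class of size $a$ must be a union of $j$ of the $u$ type-$a$ triples. By the symmetry argument of Lemma~\ref{red:matching}, this has probability at most $\binom uj/\binom{a}{3j}$ times a factor coming from the residual class.
\item The analogous factor holds for the class of size $d$.
\end{enumerate}

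The key step is then to show that this product is bounded by the geometric interpolation $A^{j/b}D^{h/b}(\kappa^bR(r/c)^b)^{s/b}$. I expect to do this with a log-convexity (Hölder-type) inequality. The quantities $\binom uj/\binom a{3j}$ behave like $(\binom ub/\binom a{3b})^{j/b}$, since the ratio $\binom{u}{j}/\binom{a}{3j}$ is log-convex in $j$ once $u\ge b$ and $a\ge 3b$. The transversal factor likewise satisfies $\binom rs/\binom cs\le (r/c)^s$. The remaining factorials must combine into $\kappa^s R^{s/b}$, where $\kappa^b$ is exactly the endpoint constant at $s=b$. Checking that the mixed-type terms interpolate, rather than exceeding the geometric mean, is the main obstacle. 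I would first try to prove log-convexity of each factor in $(j,h,s)$ along the simplex and then compare with the values at the three vertices. The cases $u=0$ or $v=0$ force $j=0$ or $h=0$, and then the corresponding factor is absent. This is why the hypothesis that $u,v$ are zero or at least $b$ is needed: it keeps the binomials $\binom ub$, $\binom vb$ in their convex range.

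\emph{Double counting.} Given the probability bound, take expectation over the mixture and over the random triples. By $\mathcal C(b)$ applied to the labels, the expected number of zero-sum unions is at least $\mathbb E B(r+u+v,b)$. On the other hand, this expectation is
\[
 \sum_{P}\Pr(P\text{ is a retained union}),
\]
summed over zero-sum $3b$-sets $P$. Each term is at most
\[
 \mathbb E\bigl[A^{j/b}D^{h/b}X^{s/b}\bigr]\le(\mathbb EA)^{j/b}(\mathbb ED)^{h/b}(\mathbb EX)^{s/b}\le\alpha,
\]
with $X=\kappa^bR(r/c)^b$, by Hölder's inequality and \eqref{tri:balanced-mean}. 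Hence $\alpha\,\mathsf N_0^{3b}(S)\ge\mathbb E B(r+u+v,b)$, which is \eqref{tri:master-count}. For a single type, bound the weighted geometric mean by the maximum of $A$, $D$ and $X$. When $r=0$ only $s=0$ occurs, so the third endpoint and the requirement $c>0$ can be dropped.
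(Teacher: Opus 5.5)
\textbf{There is a genuine gap in the probability bound \eqref{tri:hybrid-prob}.} Your double-counting half is fine and matches the paper: H\"older with exponents $b/j,b/h,b/s$ against \eqref{tri:balanced-mean}, then summing over zero-sum $3b$-sets and comparing with the $\mathbb E B(r+u+v,b)$ unions supplied by $\mathcal C(b)$. The problem is \eqref{tri:hybrid-prob} itself, which is the real content of the lemma, and the mechanism you propose for it fails.

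The probability is exact by symmetry, $P(j,h,s)=\binom rs\binom uj\binom vh/\bigl(\binom cs\binom a{3j+s}\binom d{3h+s}\bigr)$, so no sequential pairing count is needed. Your plan is to interpolate each factor separately, using log-convexity of $j\mapsto\binom uj/\binom a{3j}$. That claim is false. Take $s=0$, $u=v=b$, $j=b-1$, $h=1$. Log-convexity (with value $1$ at $j=0$) would give $b/\binom a{3b-3}\le A^{(b-1)/b}=\binom a{3b}^{-(b-1)/b}$. But as $a\to\infty$ the ratio of the left side to the right side tends to
\[
 \frac{b\,(3b)!^{1/b}}{3b(3b-1)(3b-2)}\ge\frac b{e^3}>1\qquad(b\ge35),
\]
using $k!\ge(k/e)^k$. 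The excess of the $a$-factor near $j=b$ is absorbed only by the large slack of the $d$-factor $\binom vh/\binom d{3h}$ at $h=1$. So \eqref{tri:hybrid-prob} holds only jointly, and no factor-by-factor comparison with the three vertices can prove it.

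The missing idea is the paper's decoupling:
\begin{enumerate}
\item For $u\ge b$, $\binom uj\le\binom bj\binom ub^{j/b}$, because the factors $(u-i)/(b-i)$ increase in $i$. This, not a convexity range, is why $u$ must be $0$ or at least $b$: for $0<u<b$ one has $A=0$ although $P(j,h,s)>0$ for $1\le j\le u$.
\item Concavity of $k\mapsto\log(a)_k$ interpolates $(a)_{3j+s}$ between $(a)_{3b}$, $(a)_b$ and $(a)_0=1$ with weights $j/b,s/b,h/b$, and similarly for $d$.
\item What remains is the purely numerical factor
\[
 K(j,h,s)=\frac{\binom bj\binom bh(3j+s)!\,(3h+s)!}{(3b)!^{1-s/b}(b!)^{2s/b}},
\]
which couples $j$ and $h$. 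The paper bounds it jointly: the Vandermonde estimate $\binom{3b-s}{3j+s}\ge\binom bj\binom bh\binom{2b}s/\binom bs$ gives $K(j,h,s)\le K_s$. Then $K_{s+1}/K_s\le\kappa$ gives $K_s\le\kappa^s$, and $\binom rs/\binom cs\le(r/c)^s$ supplies the rest of the third endpoint.
\end{enumerate}
Without a coupled estimate of this kind, your argument does not reach \eqref{tri:hybrid-prob}.
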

\begin{proof}
By symmetry, the exact probability in question is
\[
 P(j,h,s)=\frac{\binom rs\binom uj\binom vh}
 {\binom cs\binom a{3j+s}\binom d{3h+s}}.
\]
For $u\ge b$, comparison of the products $(u-i)/(b-i)$ gives
$\binom uj\le\binom bj\binom ub^{j/b}$.  Concavity of
$\log(a)_k$ as a function of $k$ gives
\[
 \binom a{3j+s}\ge
 \binom a{3b}^{j/b}\binom ab^{s/b}
 \frac{(3b)!^{j/b}(b!)^{s/b}}{(3j+s)!}.
\]
The analogous inequalities hold for $v,d,h$.  It remains to bound
the factorial factor
\[
 K(j,h,s)=\frac{\binom bj\binom bh(3j+s)!(3h+s)!}
 {(3b)!^{1-s/b}(b!)^{2s/b}}.
\]
Vandermonde's identity implies
\[
 \binom{3b-s}{3j+s}\ge
 \binom{2b}{2j+s}\binom{b-s}{j}
 \ge\binom bj\binom bh\frac{\binom{2b}s}{\binom bs}.
\]
For the second inequality, assume $j\le h$ and cancel factorials.
The remaining ratio is
\[
 \prod_{i=1}^j
 \frac{(2h+s+i)(2h+s+j+i)}{(j+s+i)(h+s+i)}\ge1.
\]
It follows that $K(j,h,s)\le K_s$, where
\[
 K_s=\frac{\binom bs}{\binom{2b}s}
       \frac{(3b-s)!}{(3b)!^{1-s/b}(b!)^{2s/b}}.
\]
Writing $H=((3b)!/(b!)^2)^{1/b}$, we have $K_0=1$ and
\[
 \frac{K_{s+1}}{K_s}
 =\frac{H(b-s)}{(2b-s)(3b-s)}\le\frac H{6b}=\kappa.
\]
The last rational function decreases for $0\le s<b$.
Finally $\binom rs/\binom cs\le(r/c)^s$, proving
\eqref{tri:hybrid-prob}.  Zero block counts are handled by
omitting their zero-exponent factors.

H\"older's inequality and \eqref{tri:balanced-mean} give
$\mathbb E P(j,h,s)\le\alpha$.  Summing eligibility
probabilities over all zero-sum sets of positions, and comparing
with the expected number supplied by $\mathcal C(b)$, proves
\eqref{tri:master-count}.
\end{proof}

Two simpler consequences will be used repeatedly.  A family of
$c$ transversal triples gives
\begin{equation}\label{tri:rainbow}
 \mathsf N_0^{3b}(S)\ge B(c,b)\binom ab\binom db.
\end{equation}
If $L=\lfloor a/3\rfloor$, triples from the largest class alone give
\begin{equation}\label{tri:one-class}
 \mathsf N_0^{3b}(S)\ge
       B(L,b)\frac{\binom a{3b}}{\binom Lb}.
\end{equation}
Both follow by the same symmetry and expectation argument, without
an application of H\"older's inequality.

The following estimate provides the required endpoint probabilities.
Put $\mu=(b-1)/2$.  For $u\ge b$ and $a\ge d\ge3b$,
\begin{equation}\label{tri:midpoint-product}
 \frac{\binom ub/\binom d{3b}}{\kappa^bR}
 \ge\left\{
 \frac{12b(u-\mu)(a-\mu)}
 {(b+1)(d-2b)(d-2b+1)}\right\}^{b}.
\end{equation}
Indeed the ratio on the left is
\[
 \prod_{i=0}^{b-1}
 \frac{6b(u-i)(a-i)}{(b-i)(d-b-2i)(d-b-2i-1)}.
\]
The logarithm of its factor, considered as a function of real $i$,
has positive second derivative:
\[
 \frac1{(b-i)^2}-\frac1{(u-i)^2}-\frac1{(a-i)^2}
 +\frac4{(d-b-2i)^2}+\frac4{(d-b-2i-1)^2}>0.
\]
Jensen's inequality at $i=\mu$ proves
\eqref{tri:midpoint-product}.

\subsection{Two classes and a large residue class}

We first establish the entropy comparison needed when a proposed
number of monochromatic triples exceeds its capacity.

\begin{lemma}\label{tri:entropy-root}
Let $N\ge17/5$, and let $A_0+D_0=2N$, $A_0\ge D_0\ge1$,
with $2F(A_0)=3F(D_0)$.  Then
\begin{equation}\label{tri:root-gap}
 F(A_0/2)+2F(A_0)-3F(N)>\delta,
 \qquad \delta=191/2000.
\end{equation}
\end{lemma}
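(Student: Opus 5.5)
We would first pin down the root $(A_0,D_0)$ and then estimate the left side of \eqref{tri:root-gap} using only the values of $F$ listed in \eqref{tri:constant-table}. Put $g_N(A)=2F(A)-3F(2N-A)$ for $N\le A\le 2N-1$. Since $F$ is increasing with $F(1)=0$, $g_N$ is strictly increasing, with $g_N(N)=-F(N)<0$ and $g_N(2N-1)=2F(2N-1)>0$. Hence the root $A_0$ exists and is unique, and $A_0>D_0$.

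The first key step is a lower bound on $A_0$. At $N=17/5$ we have $2N=34/5$, and $3F(12/5)-2F(22/5)>0$ says $g_{17/5}(22/5)<0$, so $A_0>22/5$ and $D_0<12/5$. For general $N$, implicit differentiation of $2F(A_0)=3F(2N-A_0)$ gives
\[
 \frac{dA_0}{dN}=\frac{6F'(D_0)}{2F'(A_0)+3F'(D_0)}.
\]
We would use the other entries of the table, such as $2F(13/3)-3F(2)>0$, to locate $(A_0,D_0)$ as $N$ grows.

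The second key step is a monotonicity reduction. Let $\Phi(N)=F(A_0/2)+2F(A_0)-3F(N)$. Then
\[
 \Phi'(N)=\Bigl(\tfrac12F'(A_0/2)+2F'(A_0)\Bigr)\frac{dA_0}{dN}-3F'(N),
\]
and we want $\Phi'(N)\ge0$. Since $F'=\log(x/(x-1))$ is explicit, this reduces to an inequality between logarithms of rational functions of $A_0,D_0,N$. We expect it to follow from $F'(D_0)>F'(N)>F'(A_0)$ together with the ordering $D_0<N<A_0$. If full monotonicity fails, we would instead bound $\Phi$ directly for every $N\ge17/5$ via the expansion $F(x)=\log x+1-\sum_{j\ge1}1/(j(j+1)x^j)$, which makes the large-$N$ regime easy.

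The main obstacle is the endpoint estimate at $N=17/5$, where the root is not explicit. The shape of the table entry $F(11/5)-\tfrac12F(22/5)-F(22/5)^2/(48(\log2)^2)$ indicates the route. First eliminate $2F(A_0)=3F(D_0)$ to write
\[
 \Phi=F(A_0/2)-\tfrac12F(A_0)+3\Bigl(\tfrac12F(A_0)+\tfrac12F(D_0)-F\bigl(\tfrac{A_0+D_0}2\bigr)\Bigr).
\]
The bracket is a (negative) Jensen gap of the concave function $F$. We would bound it below by a second-order term in $(A_0-D_0)$ with $|F''|$ controlled, and convert $A_0-D_0$ into $F$-values through $F(A_0)-F(D_0)=F(A_0)/3$ and $F'\ge$ const on the relevant interval. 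We expect this to produce exactly a correction of the form $F(A_0)^2/(48(\log 2)^2)$, with $\log2=F'(2)$ as the derivative bound. Meanwhile $F(x/2)-\tfrac12F(x)$ is increasing, so the leading part is minimised at $A_0=22/5$. Each term then evaluates at $A_0=22/5$, and the table gives the bound $>191/2000$. Checking that the constant $48$ and the use of $\log 2$ really come out of the Jensen-gap estimate is where we expect the real work.
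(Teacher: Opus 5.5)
Your decomposition
$\Phi=F(A_0/2)-\tfrac12F(A_0)-3\bigl(F(N)-\tfrac12F(A_0)-\tfrac12F(D_0)\bigr)$
is correct. The gap is in the step that carries the lemma: the upper bound on this Jensen gap, which your method does not deliver.

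\textbf{The endpoint estimate.} A second-order bound in $x$, namely $\tfrac18(A_0-D_0)^2\sup_{[D_0,A_0]}|F''|$, is far too weak. At $N=17/5$ one has $A_0\approx4.48$ and $D_0\approx2.32$, so three times this bound is about $0.57$. The room actually available is $F(A_0/2)-\tfrac12F(A_0)-\delta\approx0.255$.

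Your way of recovering $\log2$ also runs in the wrong direction. On $[2,\infty)$ one has $F'\le F'(2)=\log2$, which gives $A_0-D_0\ge F(A_0)/(3\log2)$, not $\le$. Combining that reversed inequality with $|F''|\le\tfrac12$ is exactly what would produce the constant $48\log^22$.

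The missing idea is to bound the ratio
$-F''(x)/F'(x)^2=1/\bigl(x(x-1)\log^2(x/(x-1))\bigr)$.
This ratio is decreasing, hence at most $k_0=1/(2\log^22)$ on $[2,\infty)$. Using it requires $D_0>2$, which is exactly what the entry $2F(13/3)-3F(2)>0$ provides. The ratio bound says $e^{k_0F}$ is convex on $[2,\infty)$; this is the content of the paper's median comparison for $(F^{-1})'$. Jensen's inequality at the midpoint $N$ then gives
\[
 F(N)\le F(D_0)+\frac1{k_0}\log\frac{1+e^{k_0F(A_0)/3}}2,
 \qquad
 \Phi\ge H(A_0):=F(A_0/2)-\frac3{k_0}\log\frac{1+e^{k_0F(A_0)/3}}2 .
\]
The function $H$ is increasing because $\tfrac12F'(A/2)>F'(A)$. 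Only after evaluating at $A=22/5$ does $\log\frac{1+e^t}2\le\frac t2+\frac{t^2}8$ produce the table entry.

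The order of these steps matters. The quadratic correction $F(A)^2/(48\log^22)$ grows with $A$. Between $22/5$ and the true root near $4.48$ (at $N=17/5$) it changes by about $4\cdot10^{-3}$, while the margin of the table entry over $\delta$ is below $10^{-4}$. So you cannot evaluate the leading part and the correction separately at $22/5$, as your last step suggests; you need the monotonicity of the combined function $H$.

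\textbf{The reduction to $N=17/5$.} This step is unproved, though inessential. Write $p=F'(A_0)$, $q=F'(D_0)$, $m=F'(N)$, $s=F'(A_0/2)$. Then $\Phi'(N)\ge0$ is equivalent to $sq+4pq\ge2mp+3mq$. The orderings $q>m>p$ do not imply this: for small $p$ it would require $s\ge3m$. It does follow from $s>2p$ together with concavity of $1/F'$, but you do not need it at all.

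The bound $\Phi\ge H(A_0)$ holds for every $N$. Moreover $A_0>N+1\ge22/5$ for all $N\ge17/5$, because $2F(N+1)-3F(N-1)$ decreases in $N$ and is negative at $17/5$ by the entry $3F(12/5)-2F(22/5)>0$. That is how the paper argues, with no implicit differentiation and no need to locate the root more precisely.
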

\begin{proof}
The left side of $2F(A)-3F(2N-A)=0$ increases strictly in $A$,
so the root is unique.  It satisfies $D_0>2$: at $D=2$ the
expression is at least $2F(13/3)-3F(2)>0$.
It also satisfies $A_0>N+1$, since
$2F(N+1)-3F(N-1)$ decreases with $N$ and is negative at $17/5$,
by \eqref{tri:constant-table}.

Let $G=F^{-1}$.  Then
\[
 (\log G')'(F(x))=
 \frac1{x(x-1)\log^2(x/(x-1))}.
\]
This decreases for $x>1$: the derivative of its reciprocal has
the sign of $(2x-1)\log(x/(x-1))-2>0$, using
$\log t>2(t-1)/(t+1)$.  Hence it is at most
$k_0=1/(2\log^22)$ on the interval in question.  The density
comparison used in Lemma~\ref{tri:midpoint} yields
\[
 F(N)-F(D_0)\le
 \frac1{k_0}\log\frac{1+\exp(k_0F(A_0)/3)}2.
\]
Therefore the expression in \eqref{tri:root-gap} is at least
\[
 H(A_0)=F(A_0/2)
       -\frac3{k_0}\log\frac{1+\exp(k_0F(A_0)/3)}2.
\]
The function $H$ increases for $A>2$, since
$\tfrac12F'(A/2)>F'(A)$ and the derivative of
$\log(1+e^t)$ is less than one.  Thus $A_0>22/5$, and
$\log((1+e^t)/2)\le t/2+t^2/8$ gives
\[
 H(A_0)>F(11/5)-\tfrac12F(22/5)
                   -\frac{F(22/5)^2}{48\log^22}>\delta.
\]
The final inequality is in \eqref{tri:constant-table}.
\end{proof}

\begin{lemma}\label{tri:two-class}
Under \eqref{tri:range}, if $c=0$, then
$\mathsf N_0^{3b}(S)\ge B(\ell,3b)$.
\end{lemma}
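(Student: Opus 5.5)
When $c=0$ only two residue classes modulo $3$ are present, of sizes $a\ge d$ with $a+d=\ell\ge\frac{102}5b$. The only triples available are monochromatic ones, so I would use Lemma~\ref{tri:hybrid} with $r=0$, where the third endpoint is omitted. The plan is to choose a mixture of feasible pairs $(u,v)$ with $\mathbb E A=\mathbb E D=\alpha$. This balances the probabilities with which a fixed zero-sum $3b$-set of each class profile is counted, and \eqref{tri:master-count} then gives $\mathsf N_0^{3b}(S)\ge\mathbb E B(u+v,b)/\alpha$. The target is to show this is at least $B(\ell,3b)\le 2f_{3b}(\ell/2)$.

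\textbf{Case split.} First, if $d<3b$, or more generally if $a$ exceeds $\ell/2$ by the margin of Lemma~\ref{lem:large-class} (with $q=3$ that lemma is unavailable, so I would not lean on it), I would use the one-class bound \eqref{tri:one-class}. With $L=\lfloor a/3\rfloor$, compare $B(L,b)\binom a{3b}/\binom Lb$ with $2\binom{\lceil\ell/2\rceil}{3b}$ via \eqref{red:normalized-ratio} and \eqref{tri:entropy-ratio}. The logarithm of the ratio is about $b[F(A/2)+2F(A)-3F(N)]$ in normalized variables $A=a/b$ and $N=\ell/(2\cdot 3b)$. This is positive once $A$ is large relative to $N$, which is exactly the regime of Lemma~\ref{tri:entropy-root}.

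In the main case both classes are large. I take $v$ as a free parameter, with $u=\lfloor a/3\rfloor$ and $v=\lfloor d/3\rfloor$ as the natural maximal choices, and define $x$ by $\binom xb=\binom a{3b}\alpha$ with $\alpha=\binom vb/\binom d{3b}$. Matching $\mathbb E A=\alpha$ then requires $\binom ub=\binom xb\binom vb\cdot\frac{\binom a{3b}}{\binom d{3b}}\cdots$. Concretely, I set $D=\alpha$ and choose the number $u$ of $a$-class triples so that $\binom ub/\binom a{3b}=\alpha$, i.e.\ $\binom ub=\binom a{3b}\binom vb/\binom d{3b}$. Since $u$ is generally non-integral, Lemma~\ref{tri:rounding} randomizes $u$ between $\lfloor u\rfloor$ and $\lfloor u\rfloor+1$ while preserving $\mathbb E\binom Ub$. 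Then \eqref{tri:interpolation} with $w=v$ gives $\mathbb E B(U+v,b)\ge 2\binom{(u+v)/2}b$.

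\textbf{Comparison with the target.} The required inequality becomes
\[
\frac{2\binom{(u+v)/2}{b}\binom d{3b}}{\binom vb}\ge 2\binom{(a+d)/2}{3b}.
\]
Here I would use Lemma~\ref{tri:midpoint}. That lemma is phrased with $x$ defined by $\binom xb=\binom a{3b}/\binom d{3b}$, and it controls $\log\binom{(x+b)/2}b-\log\bigl(\binom{(a+d)/2}{3b}/\binom d{3b}\bigr)\ge\Delta_b$. The natural adaptation is to take $v=b$, so that $\binom vb=1$ and $u=x$; this is the ``midpoint'' configuration, and the lemma gives exactly a gain of $e^{\Delta_b}$. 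Taking $v=b$ requires $d\ge3b$, while the lemma requires $d>6b$. For $3b\le d\le 6b$, the class $a$ carries almost all of $\ell$ (since $\ell\ge\frac{102}5b$), and I would fall back on the one-class estimate.

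It remains to check feasibility, namely $u\le a/3$, i.e.\ $\binom{a/3}b\ge\binom a{3b}/\binom d{3b}$. If this fails, the capacity is exceeded. In that case $F$-entropy comparisons show that $a$ is much larger than $d$, with the normalized root satisfying $2F(A_0)\ge3F(D_0)$. Lemma~\ref{tri:entropy-root} then gives the margin $\delta=191/2000$ per unit of $b$ for the one-class bound \eqref{tri:one-class}, which beats the Stirling error terms of \eqref{tri:integer-upper} and \eqref{tri:parity}, together with the rounding constants $\log(19/13)$ and $\log 6$ from \eqref{tri:constant-table}, once $b\ge35$.

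\textbf{Main obstacle.} I expect the hardest step to be the bookkeeping at the boundary between the two regimes. Specifically, one must verify that for $d$ near $6b$, or near the capacity threshold, one of the two estimates has a positive logarithmic margin after all Robbins error terms and the rounding loss \eqref{tri:round-loss} are paid. I would handle this by reducing everything to the normalized inequalities in \eqref{tri:constant-table} at the endpoint $N=17/5$, using monotonicity in $N$ as in Lemma~\ref{lem:large-class}.
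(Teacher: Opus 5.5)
Your architecture is the paper's. You use the one-class bound \eqref{tri:one-class} for $d<3b$. When the cap $T=\binom a{3b}/\binom d{3b}\le\binom{\lfloor a/3\rfloor}b$ holds, you take $b$ triples from the second class and $x$ with $\binom xb=T$, rounded by Lemma~\ref{tri:rounding} and compared through Lemma~\ref{tri:midpoint}. Under cap failure, you use the one-class bound with the margin $\delta$ of Lemma~\ref{tri:entropy-root}. However, two steps in the capped case do not go through as written.

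\textbf{The parity reduction is reversed.} You claim $B(\ell,3b)\le 2f_{3b}(\ell/2)$, but convexity (the first inequality in \eqref{tri:parity}) gives $B(\ell,3b)\ge 2f_{3b}(\ell/2)$, strictly for odd $\ell$. So your displayed ``required inequality'' only settles even $\ell$. Falling back to $B(\ell,3b)\le 2\binom{\lceil\ell/2\rceil}{3b}$ does not help either. Near $\ell=102b/5$, moving from $\ell/2$ to $\lceil\ell/2\rceil$ costs about $\tfrac12\log(17/12)\approx0.17$ in the logarithm, far more than $\Delta_b<1/48$. You need the second-order bound in \eqref{tri:parity}. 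With $\ell/2\ge51b/5$ and \eqref{tri:Hr}, the odd-parity loss is at most $\tfrac18\log^2(19/13)<361/20000<\Delta_b$. So the gain $e^{\Delta_b}$ from Lemma~\ref{tri:midpoint} is exactly what closes the odd case.

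\textbf{The case $x\le b+1$ is missing.} Lemma~\ref{tri:midpoint} requires $x>b+1$, and you never treat $T\le b+1$, which includes the balanced profile $a=d$. It is easy but must be done separately. There $U\in\{b,b+1\}$ and $B(U+b,b)=\binom Ub+1$, so $\mathbb E B(U+b,b)=T+1$. The count $\binom d{3b}(T+1)=\binom a{3b}+\binom d{3b}$ is then at least $B(\ell,3b)$ by the balancing step of Lemma~\ref{lem:majority}.

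\textbf{Smaller points.}
\begin{enumerate}
\item For $3b\le d\le6b$ you need no separate fallback, because the cap fails automatically. There $A=a/(3b)\ge24/5$, and Jensen's inequality for $s\mapsto\log((a-s)/(d-s))$ gives $T\ge((A-1/2)/(5/3))^{3b}>(eA)^b\ge\binom{\lfloor a/3\rfloor}b$. So this range falls into the cap-failure case.
\item In the cap-failure case, the failure only yields $bf(A)>-\eta$ with $f(A)=2F(A)-3F(2N-A)$. Thus $A$ may lie slightly below the root $A_0$. Transferring $\delta$ to $J(A)=F(A/2)+2F(A)-3F(N)$ requires the comparison $0<J'\le f'$ on $N\le A<2N-1$.
\item With $N=\ell/(6b)$, the normalization must be $A=a/(3b)$, not $a/b$.
\end{enumerate}
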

\begin{proof}
If $d<3b$, then $L=\lfloor a/3\rfloor\ge4b$ and
$a/\lceil\ell/2\rceil\ge3/2$.  Every factor of
$\binom{\lfloor L/2\rfloor}b/\binom Lb$ is at least $1/3$.
Thus \eqref{tri:one-class} divided by $B(\ell,3b)$ is at least
$((3/2)^3/3)^b=(9/8)^b>1$.

Suppose $d\ge3b$, and put $T=\binom a{3b}/\binom d{3b}$.
First suppose $T\le\binom{\lfloor a/3\rfloor}b$.  Choose
$x\ge b$ with $\binom xb=T$, round it as in
Lemma~\ref{tri:rounding}, and use $U$ triples from the first class
and $b$ triples from the second.  Lemma~\ref{tri:hybrid} gives
\begin{equation}\label{tri:two-capped-count}
 \mathsf N_0^{3b}(S)\ge\binom d{3b}\mathbb E B(U+b,b).
\end{equation}
The cap implies $d>6b$.  Otherwise $A=a/(3b)\ge24/5>43/10$,
and convexity of $\log((a-s)/(d-s))$ gives
\[
 T\ge\left(\frac{A-1/2}{5/3}\right)^{3b}
       >(eA)^b\ge\binom{\lfloor a/3\rfloor}b,
\]
a contradiction.  The strict inequality follows already at
$A=43/10$ from $(57/25)^3>(43/10)(11/4)$ and $e<11/4$,
and its ratio increases with $A$.
If $T\le b+1$, then $\mathbb E B(U+b,b)=T+1$, and
\eqref{tri:two-capped-count} is at least
$\binom a{3b}+\binom d{3b}\ge B(\ell,3b)$.
Otherwise Lemmas~\ref{tri:rounding} and \ref{tri:midpoint} give
\[
 \mathsf N_0^{3b}(S)\ge2\binom{\ell/2}{3b}e^{\Delta_b}.
\]
For odd $\ell$, \eqref{tri:parity} costs at most
$\tfrac18\log^2(19/13)<361/20000<\Delta_b$; for even $\ell$
there is no loss.

It remains to treat $T>\binom Lb$.  Write
$N=\ell/(6b)$, $A=a/(3b)$, $D=d/(3b)$, and $t=L/b$.
Put
\[
 f(A)=2F(A)-3F(2N-A),\quad
 J(A)=F(A/2)+2F(A)-3F(N).
\]
On $N\le A<2N-1$, $0<J'\le f'$.  For the second inequality
use $D\le A$, $A\ge3$, and
$\tfrac12F'(A/2)\le1/(A-2)\le3/A\le3F'(D)$.
All ensuing comparisons extend to $D=1$ by continuity.
By \eqref{tri:entropy-ratio}, \eqref{tri:integer-lower}, and
$A-1/b\le t\le A$, cap failure gives
\[
 bf(A)>-\eta,\qquad
 \eta=\tfrac12\log(2\pi b)+\tfrac12F'(t)
                 +\frac{1+1/(t-1)}{12b}.
\]
Comparing with the root in Lemma~\ref{tri:entropy-root} yields
$bJ(A)>b\delta-\eta$.

Monotonicity of $R_b$ and convexity give
\[
 \frac{B(L,b)}{\binom Lb}
 \ge2\exp\{b[F(t/2)-F(t)]\}.
\]
With $Q=\lceil\ell/2\rceil$, the factorial bounds give
\begin{equation}\label{tri:original-ratio}
 \log\frac{\binom a{3b}}{\binom Q{3b}}
 \ge3b[F(A)-F(N)]-F'(N)-\frac{1+1/N}{36b}.
\end{equation}
Indeed the ratio of normalized factors loses at most
$\frac12F'(N)+(1+1/N)/(36b)$, and replacing $\ell/2$ by $Q$
loses at most $F'(N)/2$.
Let $D_*(x)=F(x/2)-F(x)/2$.  Its derivative is positive and
decreasing for $x>2$.  Combining the preceding estimates with
\eqref{tri:one-class}, and using $t\ge17/5-1/b$, gives
\begin{align}
 \log\frac{B(L,b)\binom a{3b}}{\binom Lb B(\ell,3b)}
 >{}&b\delta-\tfrac12\log(2\pi b)-D_*'(17/5-1/b)-F'(17/5)
 \notag\\[-2pt]
 &-\frac{1+1/(17/5-1/b-1)}{12b}
       -\frac{1+5/17}{36b}.\label{tri:two-loss}
\end{align}
The right side increases for $b\ge35$.  At $b=35$ it is greater
than $19/1000$, by \eqref{tri:log-certificate} with $\pi<22/7$.
This completes the proof.
\end{proof}

\begin{lemma}\label{tri:large-class}
Under \eqref{tri:range}, if $a\ge16\ell/25$, then
$\mathsf N_0^{3b}(S)\ge B(\ell,3b)$.
\end{lemma}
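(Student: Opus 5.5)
Since $a\ge 16\ell/25$, the largest residue class modulo $3$ is very dominant, and I would copy the proof of Lemma~\ref{lem:large-class}: use only triples inside the class of size $a$. By \eqref{tri:one-class} with $L=\lfloor a/3\rfloor$,
\[
 \mathsf N_0^{3b}(S)\ge B(L,b)\frac{\binom a{3b}}{\binom Lb},
\]
so it suffices to show that this exceeds $B(\ell,3b)$. Lemma~\ref{lem:large-class} cannot be cited as stated, because it needs $q\ge5$ (and $b\ge25$), whereas here $q=3$. Its threshold at $q=3$ would be $\tfrac12+\tfrac{41}{300}\approx 0.637$, and $16/25=0.64$ sits just above it. So I expect the same entropy computation to go through with a much smaller margin.

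Set $N=\ell/(6b)\ge 17/5$ (from \eqref{tri:range}), $A=a/(3b)\ge 32N/25$, and $t=L/b\in(A-1/b,A]$. Following the proof of Lemma~\ref{tri:two-class}, combine three estimates:
\begin{itemize}
\item $B(L,b)/\binom Lb\ge 2\exp\{b[F(t/2)-F(t)]\}$, from monotonicity of $R_b$ and convexity;
\item \eqref{tri:original-ratio} for $\binom a{3b}/\binom Q{3b}$ with $Q=\lceil \ell/2\rceil$;
\item $B(\ell,3b)\le 2\binom Q{3b}$.
\end{itemize}
This gives
\[
 \log\frac{B(L,b)\binom a{3b}}{\binom Lb B(\ell,3b)}\ge bJ(A)-D_*'(t)-F'(N)-O(1/b),
\]
where $J(A)=F(A/2)+2F(A)-3F(N)$. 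The term $D_*'(t)$ bounds the loss from replacing $A$ by $t$; it is evaluated at $t\ge 17/5-1/b$ using that $D_*'$ is decreasing.

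The key step is a uniform lower bound for $J$. $J$ increases in $A$, so it suffices to treat $A=32N/25$ and show that $N\mapsto F(16N/25)+2F(32N/25)-3F(N)$ is increasing for $N\ge17/5$. I would use the series $F(x)=\log x+1-\sum_j 1/(j(j+1)x^j)$ as in Lemma~\ref{lem:large-class}. The constant term is $2\log(32/25)+\log(16/25)$, i.e. $\log(16\cdot 32^2/25^3)\approx 0.07$, which is positive. Each coefficient of $N^{-j}$ has the form $(25/16)^j+2(25/32)^j-3$, and this is positive by convexity because the weighted mean satisfies $(25/16+2\cdot 25/32)/3=25/24>1$. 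So the function is increasing, and its minimum is at $N=17/5$, giving $F(272/125)+2F(544/125)-3F(17/5)>27/2000$ by the table \eqref{tri:constant-table}.

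The main obstacle is that the margin $27b/2000$ is small: at $b=35$ it is only about $0.47$. It has to beat $F'(17/5)\approx0.347$, plus $D_*'(17/5-1/b)$ (roughly $0.05$), plus the $O(1/b)$ Robbins terms. I expect this to close only barely, and I would verify it exactly at $b=35$ with the certificate \eqref{tri:log-certificate}. The right side increases in $b$, since each loss term decreases, so the endpoint $b=35$ suffices. If the margin fails, the fallback is to sharpen the first estimate. I would keep the exact ratio $B(L,b)/\binom Lb$ via \eqref{tri:parity} rather than the crude $R_b$ monotonicity, and avoid the $\tfrac12F'(N)$ parity loss by treating even and odd $\ell$ separately.
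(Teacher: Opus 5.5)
Your route is the paper's: \eqref{tri:one-class}, the same three estimates, and the same series argument showing that $J(32N/25,N)$ increases in $N$, ending at the tabulated constant $27/2000$. \textbf{The gap is the loss term for replacing $A$ by $t$.} You import $D_*'(t)$ from Lemma~\ref{tri:two-class}, but there $D_*'=E'+\tfrac12F'$ with $E(x)=F(x/2)-F(x)$. The extra $\tfrac12F'(t)$ is the cap-failure contribution carried in $\eta$, and it does not occur here. In the present lemma the only cost is $b[E(A)-E(t)]\le E'(t)$, since $A-t<1/b$ and $E'$ decreases, where $E'(x)=\tfrac12\log\bigl(1+1/(x(x-2))\bigr)$.

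This changes the numerics decisively. Since $D_*'(x)=\tfrac12\log\frac{x-1}{x-2}$, you get $D_*'(118/35)=\tfrac12\log(83/48)\approx0.274$, not $0.05$. Your final bound at $b=35$ is then about $0.4725-0.274-0.348<0$, and it stays negative up to roughly $b=46$. So the endpoint verification, as you set it up, fails.

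The fix is to use $E'(t)$ and to evaluate it at $t>A-1/b\ge 544/125-1/b$, which holds because $A\ge 32N/25\ge 544/125$, rather than at $17/5-1/b$. Then the loss is $E'(544/125-1/35)\approx0.047$, and the total at $b=35$ exceeds $3/40$; this is exactly the paper's check. Even $E'(118/35)\approx0.098$ would leave a small positive margin. Your fallback via \eqref{tri:parity} is therefore unnecessary. A minor point: the constant term $\log(16384/15625)$ is about $0.047$, not $0.07$, but you only use the tabulated value at $N=17/5$.
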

\begin{proof}
Put $N=\ell/(6b)$, $A=a/(3b)$, $L=\lfloor a/3\rfloor$,
$t=L/b$, and $E(x)=F(x/2)-F(x)$.  The proof of
\eqref{tri:two-loss}, without the cap estimate, gives
\[
 \log\frac{B(L,b)\binom a{3b}}{\binom LbB(\ell,3b)}
 \ge bJ(A,N)-E'(t)-F'(N)-\frac{1+1/N}{36b},
\]
where $J(A,N)=F(A/2)+2F(A)-3F(N)$.
For $p=16/25$, this increases in $A\ge2pN$, while
\[
 J(2pN,N)=\log(4p^3)
 -\sum_{r\ge1}\frac{p^{-r}+2(2p)^{-r}-3}{r(r+1)N^r}
\]
increases in $N$.  Every coefficient subtracted is positive, by
convexity of the power function and $2/(3p)>1$.  Consequently
$J(A,N)>27/2000$, by \eqref{tri:constant-table}.
As $t\ge544/125-1/b$ and $E',F'$ decrease, the displayed lower
bound is at least
\[
 \frac{27b}{2000}-E'(544/125-1/b)-F'(17/5)
       -\frac{1+5/17}{36b}.
\]
It increases with $b\ge35$ and at $35$ is greater than $3/40$,
by \eqref{tri:log-certificate}.  Now apply
\eqref{tri:one-class}.
\end{proof}

\subsection{Profiles without a majority class}

\begin{lemma}\label{tri:thin-balanced}
Under \eqref{tri:range}, suppose $c>0$, $a\le d+c$, and
$c\le d/4$.  Then $\mathsf N_0^{3b}(S)\ge B(\ell,3b)$.
\end{lemma}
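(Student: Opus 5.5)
We propose to prove Lemma~\ref{tri:thin-balanced} with the master count \eqref{tri:master-count} of Lemma~\ref{tri:hybrid}, using a mixture of the types $(r,u,v)$ in which transversal triples and monochromatic triples from the two large classes are both present. The hypotheses $a\le d+c$ and $c\le d/4$ say that $a$ and $d$ are comparable, with $d\ge a-c\ge a-d/4$, and that the transversal supply is thin. In this regime a scheme with one class alone, as in \eqref{tri:one-class}, loses too much, since $a$ is not a majority. A purely transversal scheme, as in \eqref{tri:rainbow}, also loses, because $c$ is small. So we combine the two kinds of triples.

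We take $r=c$ transversal triples throughout; any smaller $r$ only wastes the third endpoint. This leaves $a-c$ and $d-c$ positions in the two large classes, and we draw $u$ and $v$ monochromatic triples there, each either $0$ or at least $b$, so that \eqref{tri:capacity} holds. The numbers $u,v$ are chosen as random variables so that the balance condition \eqref{tri:balanced-mean} holds with a common value $\alpha$. Concretely, we fix real $x,y$ with $\binom xb/\binom a{3b}=\binom yb/\binom d{3b}=\alpha$ and with
\[
\kappa^bR\le\alpha ,
\]
which is the third endpoint at $r=c$. We then round $x$ and $y$ independently by Lemma~\ref{tri:rounding}, so that $\mathbb E A=\mathbb E D=\alpha$ exactly. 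The natural choice is to make the third condition tight, $\alpha=\kappa^bR$, and then solve for $x,y$. Estimate \eqref{tri:midpoint-product} shows that $x,y$ so defined satisfy $x\ge b$ and fit inside the capacities $(a-c)/3$ and $(d-c)/3$. This is where $c\le d/4$ enters: it keeps $d-2b$ comparable to $u$. If the cap fails we shrink $\alpha$, as in the capped case of Lemma~\ref{tri:two-class}.

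Then \eqref{tri:master-count} gives
\[
\mathsf N_0^{3b}(S)\ge\frac{\mathbb E B(c+U+V,b)}{\alpha}.
\]
By convexity and \eqref{tri:interpolation}, this is at least $2\binom{(c+x+y)/2}{b}/\alpha$, up to the rounding loss \eqref{tri:round-loss}. The comparison with $B(\ell,3b)\le2f_{3b}(\ell/2)\exp(H^2/8)$ from \eqref{tri:parity} then becomes an entropy inequality in the normalized variables $A=a/(3b)$, $D=d/(3b)$, $C=c/(3b)$ and $N=\ell/(6b)\ge17/5$. We evaluate both sides with \eqref{tri:real-lower}, \eqref{tri:integer-upper} and \eqref{tri:entropy-ratio}. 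The leading term is $b$ times an explicit function $\Phi(A,D,C)$ built from $F$, and we must show $\Phi$ exceeds a fixed positive constant on the region $C\le D/4$, $A\le D+C$, $A+D+C=2N$.

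The main obstacle is this uniform positivity of $\Phi$, together with absorbing the $O(\log b)$ and $O(1)$ losses for $b\ge35$. We would first show that $\Phi$ is monotone in $N$ using the series expansion of $F$, as in the proofs of Lemmas~\ref{lem:large-class} and \ref{tri:large-class}, which reduces to $N=17/5$. Next we would show that $\Phi$ is minimized on the boundary of the profile region, either at $C\to0$, where it matches the two-class situation of Lemma~\ref{tri:two-class} and the margin $\Delta_b$ of Lemma~\ref{tri:midpoint}, or at $C=D/4$ or $A=D+C$. The endpoint values would be certified by new rows in the style of \eqref{tri:constant-table}, such as $F(51/50)+F(51/5)+F(204/25)-3F(17/5)>1/5$. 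The residual error terms are then compared at $b=35$ exactly as in \eqref{tri:two-loss}.
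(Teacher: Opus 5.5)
You have a genuine gap at the step you call the main obstacle. $\Phi$ is not bounded below by a positive constant on this region, so an entropy estimate with $O(\log b)$ and $O(1)$ losses to absorb cannot work.

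The hypotheses allow $c=1$ and $a=d$ with $d/b$ arbitrarily large. These profiles are adjacent to the extremal configuration $0^{\ell/2}1^{\ell/2}$, which has $c=0$ and $a=d$. At $C=0$, $A=D$ your scheme returns exactly $2\binom d{3b}=B(2d,3b)$, so $\Phi$ vanishes there. At $c=1$, $a=d$ it gives, up to rounding, $\binom d{3b}\bigl(2+b/(x+1-b)\bigr)$ with $x\approx d/(6e)$. Compare $B(\ell,3b)=\binom d{3b}\bigl(2+3b/(d+1-3b)\bigr)$: the excess factor is only $1+O(b/d)$. So $b\Phi$ is $O(1)$ on part of the region, and even $o(1)$ as $d/b\to\infty$.

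Your fallback to $\Delta_b$ does not repair this. $\Delta_b$ is an additive $O(1)$ margin for the two-class scheme with $v=b$, not for your $\alpha=\kappa^bR$. Lemma~\ref{tri:midpoint} also needs $x>b+1$ and $d>6b$, and the margin disappears as $a\to d$.

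There is also a smaller error. \eqref{tri:midpoint-product} only bounds the solution of $\binom yb=\kappa^bR\binom d{3b}$ from above; it does not give $y\ge b$. Near the lower end $d\approx 204b/25$ of the range, $\binom d{3b}^{-1}>\kappa^bR$. There the tight solution lies below $b$, which violates the hypothesis of Lemma~\ref{tri:hybrid}.

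The missing idea is to avoid asymptotics and make an exact comparison in which the common factors cancel; this is what the paper does. It uses a single type with no balancing: $c$ transversal triples, and the same number $u=\max\{b,\lceil\mu+(b+1)(d-2b)(d-2b+1)/(12b(d-\mu))\rceil\}$ of triples in each large class.

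By \eqref{tri:midpoint-product}, $\binom ub/\binom d{3b}\ge\kappa^bR$. Since $a\ge d$, this $d$-class endpoint is the largest, so Lemma~\ref{tri:hybrid} gives $\mathsf N_0^{3b}(S)\ge B(2u+c,b)\binom d{3b}/\binom ub$.

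The hypothesis $a\le d+c$ gives $\lceil\ell/2\rceil\le d+c$, hence $B(\ell,3b)\le2\binom{d+c}{3b}$; the surplus $a-d$ is simply discarded. Together with $B(2u+c,b)\ge2\binom{u+c/2}b$, everything reduces to $\binom{u+c/2}b/\binom ub\ge\binom{d+c}{3b}/\binom d{3b}$.

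Put $X=d/b$, $z=c/b$, $W=(2u-b+1)/b$ and $H=X-3+1/b$. Jensen bounds the left side below by $(1+z/W)^b$ and the right side above by $(1+z/H)^{3b}$. The rational comparison \eqref{tri:W-compare}, using $z\le X/4$, finishes the proof. The same comparison gives $3u+c<d$, so the type is feasible.

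Both sides of the ratio inequality equal $1$ at $c=0$. So this argument has no additive losses to absorb and remains valid however small $c$ is.
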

\begin{proof}
Put $\mu=(b-1)/2$ and choose
\[
 u=\max\left\{b,\left\lceil\mu+
 \frac{(b+1)(d-2b)(d-2b+1)}{12b(d-\mu)}\right\rceil\right\}.
\]
Use $c$ transversal triples and $u$ triples in each of the two
larger classes.  We verify feasibility and the numerical count.
Set $X=d/b$, $z=c/b$, and $W=(2u-b+1)/b$.  Since
$\ell\le2d+2c\le5d/2$, we have $X\ge X_0:=204/25$.
The ceiling error and $1/b\le1/35$ give
\begin{equation}\label{tri:W-thin}
 W\le\max\{36/35,w(X)\},\qquad
 w(X)=\frac2{35}+\frac6{35}
                  \frac{(X-2)(X-69/35)}{X-1/2}.
\end{equation}
For $X\ge X_0$,
\begin{equation}\label{tri:W-compare}
 \max\{36/35,w(X)\}<
 \min\left\{\frac{(X-3)^3}
 {3(X-3)^2+(3X/4)(X-3)+X^2/16},\frac X2-1\right\}.
\end{equation}
Here are details of the rational comparisons.  Write the denominator
of the first fraction as $A(X)$ and that fraction as $L(X)$.
It increases, since
\[
 L(X)=\frac{X-3}{3+(3/4)X/(X-3)+(1/16)(X/(X-3))^2},
\]
and $L(X_0)-36/35=32061/200725$.  After positive denominators
are cleared, $L(X)>w(X)$ is the positivity of
\[
 (X-3)^3(X-1/2)
 -A(X)\{(2/35)(X-1/2)+(6/35)(X-2)(X-69/35)\}.
\]
Its coefficients in ascending powers of $X-X_0$ are
\[
 \frac{961754409}{3906250},\quad
 \frac{448955211}{2187500},\quad
 \frac{148972183}{2450000},\quad
 \frac{187587}{24500},\quad\frac{97}{280}.
\]
For $X/2-1>w(X)$ the corresponding coefficients are
$727123/43750$, $116031/24500$, and $23/70$.  The comparison
with the constant $36/35$ already holds at $X_0$.

Equations \eqref{tri:W-thin}--\eqref{tri:W-compare} imply
$u<d/4-1/2$, whence $3u+c<d$.  The type is feasible.
By \eqref{tri:midpoint-product} and the definition of $u$,
\[
 \frac{\binom ub}{\binom d{3b}}\ge\kappa^bR.
\]
This is the largest endpoint, so Lemma~\ref{tri:hybrid} gives
\begin{equation}\label{tri:thin-balanced-count}
 \mathsf N_0^{3b}(S)\ge
 B(2u+c,b)\frac{\binom d{3b}}{\binom ub}.
\end{equation}
Convexity of $t\mapsto\log(1+s/t)$ and Jensen's inequality give
\[
 \frac{\binom{u+c/2}b}{\binom ub}\ge(1+z/W)^b,
 \qquad
 \frac{\binom{d+c}{3b}}{\binom d{3b}}
 \le\left(1+\frac z{X-3+1/b}\right)^{3b}.
\]
For $H>0$, the inequality $1+z/W\ge(1+z/H)^3$ is equivalent
to $W\le H^3/(3H^2+3zH+z^2)$.  The last expression increases
in $H$ and decreases in $z$.  Since $H=X-3+1/b\ge X-3$ and
$z\le X/4$, \eqref{tri:W-compare} proves
\[
 \frac{\binom{u+c/2}b}{\binom ub}
 \ge\frac{\binom{d+c}{3b}}{\binom d{3b}}.
\]
Finally $B(2u+c,b)\ge2\binom{u+c/2}b$, while
$\lceil\ell/2\rceil\le d+c$ gives
$B(\ell,3b)\le2\binom{d+c}{3b}$.  Substitute these inequalities
in \eqref{tri:thin-balanced-count}.
\end{proof}

\begin{lemma}\label{tri:dense-balanced}
Under \eqref{tri:range}, suppose $a\le d+c$ and $c\ge d/4$.
Then the lower bound in \eqref{tri:rainbow} exceeds $B(\ell,3b)$.
\end{lemma}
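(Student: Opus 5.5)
We have to show that $B(c,b)\binom ab\binom db>B(\ell,3b)$ under $a\ge d\ge c\ge d/4$, $a\le d+c$, and $\ell=a+d+c\ge102b/5$. The plan is to compare both sides in the entropy scale $F$, using Lemma~\ref{tri:analytic}, and then to reduce to a two-parameter minimisation.

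\emph{Step 1: reduction to an entropy inequality.} For the upper bound we use $B(\ell,3b)\le2f_{3b}(\ell/2)\exp(H_{3b}(\ell/2)^2/8)$ from \eqref{tri:parity}. Since $\ell/2\ge17b/5$, we have $H_{3b}(\ell/2)\le\log(19/13)$, so the parity correction costs at most $\tfrac18\log^2(19/13)$. By \eqref{tri:integer-upper} at the real point $\ell/2$ (or via monotonicity of $R_{3b}$),
\[
 \log f_{3b}(\ell/2)\le 3bF(N)-\tfrac12\log(6\pi b)+O(1),\qquad N=\ell/(6b).
\]
For the lower bound, \eqref{tri:parity} gives $B(c,b)\ge2f_b(c/2)$. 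Then \eqref{tri:real-lower} and \eqref{tri:integer-lower} give
\[
 \log\bigl(B(c,b)\tbinom ab\tbinom db\bigr)\ge b\bigl[F(c/(2b))+F(a/b)+F(d/b)\bigr]-\tfrac32\log(2\pi b)+O(1).
\]
This needs $c/2>b-1$, which holds because $c\ge d/4\ge\ell/10\ge2.04b$. After normalising, with $x=a/b$, $y=d/b$, $z=c/b$, it suffices to prove
\[
 b\,\Phi(x,y,z)>\log(2\pi b)+C,\qquad \Phi=F(z/2)+F(x)+F(y)-3F\bigl((x+y+z)/6\bigr),
\]
where $C$ is an explicit constant collecting the $F'$ terms and the $1/(12k)$ terms.

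\emph{Step 2: the two-parameter minimisation.} Minimise $\Phi$ over the region $x\ge y\ge z\ge y/4$, $x\le y+z$, $x+y+z\ge102/5$. $F$ is concave and increasing, and $F(u)=\log u+1-\sum_j 1/(j(j+1)u^j)$. The main term of $\Phi$ is therefore
\[
 \log\frac{(z/2)\,x\,y}{((x+y+z)/6)^3},
\]
plus a correction series that is decreasing in absolute value as the scale grows. By AM--GM, this main term is smallest, for fixed sum, when the profile is most unbalanced. The extreme points of the region are $z=y/4$ together with $x=y+z$, which gives the profile $(x,y,z)=(5s,4s,s)$ with sum $10s$; the main term there is $\log(5\cdot4\cdot\tfrac12\cdot 216/1000)=\log(4.32)>0$. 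One must also check the other vertices, such as $x=y$, $z=y/4$, or $x=y=z$; for example $x=y=z$ gives $\log(27/8)$. The plan is to show that $\Phi$ is monotone along each edge, for instance by a scaling argument as in the proof of Lemma~\ref{tri:large-class}. Scaling all of $x,y,z$ up increases $\Phi$, since the correction coefficients are positive by convexity of power functions. Hence the minimum occurs at total $102/5$ and at one of the vertices, and there a certified constant such as $\Phi\ge\log 3$ is obtained from \eqref{tri:log-certificate}.

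\emph{Step 3: closing.} With $\Phi\ge\theta>1$ and $b\ge35$, the left side $b\theta$ dominates $\log(2\pi b)+C$, since $35\theta\gg\log(220)+C$. The inequality then holds at $b=35$ and persists because its difference increases in $b$. The main obstacle is Step 2: rigorously locating the minimum of $\Phi$ on the polytope. If $\Phi$ fails to be concave, I would fix the sum $\sigma=x+y+z$, which makes the $-3F(\sigma/6)$ term constant. Then $F(z/2)+F(x)+F(y)$ is a sum of concave functions, hence concave on the slice, so its minimum lies at a vertex of the slice polygon. Reducing to the vertices of the slice in this way, and then using monotonicity in $\sigma$, avoids any delicate calculus.
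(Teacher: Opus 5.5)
Your architecture matches the paper's: a Stirling reduction to an entropy gap, concavity on the slice $x+y+z=\sigma$ to reach vertices, monotonicity in the scale, and finally absorbing the $\log b$ losses at $b=35$. There is a genuine gap, though: the quantitative core of Step~2 is false, and Step~3 depends on it. The infimum of $\Phi$ on the region is not $\ge\log 3$. It is attained at the vertex $(x,y,z)=(\sigma/2,2\sigma/5,\sigma/10)$ with $\sigma=102/5$, where
\[
 \Phi=F(51/50)+F(51/5)+F(204/25)-3F(17/5)\approx0.226 .
\]
Two mistakes hide this.
\begin{itemize}
\item The main term at $(5s,4s,s)$ is $\log(10\cdot27/125)=\log2.16$, not $\log4.32$. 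At $x=y=z$ it is $\log4$, not $\log(27/8)$.
\item The correction series is not small at $\sigma=102/5$. There $z/2=51/50$, so $F(z/2)\approx0.098$, far below $\log(z/2)+1\approx1.02$.
\end{itemize}
So no $\theta>1$ exists. The constant one can actually certify is $1/5$, which is the corresponding entry of \eqref{tri:constant-table}. Step~3 then has to be carried out with explicit constants, for instance
\[
 \frac b5-\log b-\log\frac{2\pi}{\sqrt3}-\log\frac{17}{12}-\frac1{4b}-\frac1{12Q}>0 .
\]
This holds at $b=35$ with a margin of only about $1.8$, so the informal ``$35\theta\gg\log220+C$'' is not a valid closing step.

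The scaling claim also fails as stated. Write a vertex as $\sigma v$ with $v_x+v_y+v_z=1$. The $j$-th coefficient of the correction series is $(2/v_z)^j+v_x^{-j}+v_y^{-j}-3\cdot6^j$. Its value at $j=1$ is $2/v_z+1/v_x+1/v_y-18$, which equals $-6$ at $(1/3,1/3,1/3)$ and $-4$ at $(1/2,1/4,1/4)$. For example, at $x=y=z$,
\[
 \Phi=2\log2+2\sum_{j\ge1}\frac{6^j-3^j}{j(j+1)\sigma^j},
\]
which is \emph{decreasing} in $\sigma$. The convexity/power-mean argument you borrow from Lemma~\ref{tri:large-class} therefore works only at $(1/2,2/5,1/10)$, where the coefficients are $20^j+2^j+(5/2)^j-3\cdot6^j>0$, and at $(4/9,4/9,1/9)$. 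The two remaining vertices need a separate lower bound. The paper first fixes $s$ and $C$ and pushes $A$ to $\min\{s/2,\,s-2C\}$, which leaves only three points and removes $(4/9,4/9,1/9)$. It then compares the two other points pointwise with the worst one, using $F(x)=1+\int_0^1\log(x-t)\,dt$ and explicit polynomial inequalities between the integrands.

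There are also minor slips in Step~1. What gives $H_{3b}(\ell/2)\le\log(17/12)<\log(19/13)$ is $\ell/2\ge51b/5$, not $17b/5$. And $c\ge\ell/10$ follows from $\ell\le2(d+c)\le10c$, not from $d/4\ge\ell/10$, which is false in general.
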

\begin{proof}
Write $s=\ell/b$, $A=a/b$, $D=d/b$, and $C=c/b$.
We first show
\begin{equation}\label{tri:dense-gap}
 J:=F(C/2)+F(A)+F(D)-3F(s/6)>1/5.
\end{equation}
At fixed $s,C$, concavity of $F$ shows that $F(A)+F(D)$ is
smallest for the largest feasible $A$.  Since $A\le s/2$ and
$D\le4C$, we have $s/10\le C\le s/3$.  The extreme pairs are
$(A,D)=(s/2,s/2-C)$ for $s/10\le C\le s/4$, and
$(A,D)=(s-2C,C)$ for $s/4\le C\le s/3$.  The resulting
functions of $C$ are concave.  Thus it suffices to consider
\[
 (A,D,C)=(s/2,2s/5,s/10),\quad(s/2,s/4,s/4),
                 \quad(s/3,s/3,s/3).
\]
Their entropy gaps are
\[
 \begin{split}
 E_1&=F(s/20)+F(s/2)+F(2s/5)-3F(s/6),\\
 E_2&=F(s/8)+F(s/2)+F(s/4)-3F(s/6),\\
 E_3&=2F(s/3)-2F(s/6).
 \end{split}
\]
Use $F(x)=1+\int_0^1\log(x-t)\,dt$.
On putting $x=s/20$, the integrand comparison $E_2>E_1$ follows
from
\[
 (5x/2-t)(5x-t)-(x-t)(8x-t)=\tfrac92x^2+\tfrac32xt>0.
\]
Similarly, $E_3>E_1$ follows from
\[
 (s/6-t)(s/3-t)^2-(s/20-t)(s/2-t)(2s/5-t)
 =\frac{23s^3}{2700}+\frac{41s^2t}{1800}-\frac{7st^2}{60}>0,
\]
because $s\ge102/5$ and $0\le t\le1$.
The convergent expansion
\begin{equation}\label{tri:F-series}
 F(x)=\log x+1-\sum_{r\ge1}\frac1{r(r+1)x^r}
\end{equation}
shows that $E_1$ increases with $s$: each coefficient
$20^r+2^r+(5/2)^r-3\cdot6^r$ is positive.  At $s=102/5$,
the required lower bound is in \eqref{tri:constant-table}.

To pass from entropy to the exact count, put $Q=\lceil\ell/2\rceil$.
Use $B(c,b)\ge2\binom{c/2}b$, apply
\eqref{tri:real-lower} to the three factors of
\eqref{tri:rainbow}, and apply \eqref{tri:integer-upper} to
$B(\ell,3b)\le2\binom Q{3b}$.  Since
$Q/(Q-3b)\le17/12$ and the entropy increment from $\ell/2$ to
$Q$ is at most $\frac12\log(17/12)$, the result is
\[
 \log\frac{B(c,b)\binom ab\binom db}{B(\ell,3b)}
 \ge bJ-\log b-\log\frac{2\pi}{\sqrt3}
       -\log\frac{17}{12}-\frac1{4b}-\frac1{12Q}
 >\frac b5-\log b-2-\frac1b>0.
\]
Here $(2\pi/\sqrt3)(17/12)<187/35<e^2$, and the last
expression increases for $b\ge35$ and is positive at $35$.
\end{proof}

\subsection{A majority class and a relatively small third class}

In this subsection the assumptions are
\begin{equation}\label{tri:thin-majority}
 d+c\le a\le\frac{16}{9}(d+c),\qquad 1\le c\le d/4,
 \qquad \ell\ge102b/5.
\end{equation}
In particular
\begin{equation}\label{tri:D0}
 d\ge D_0b,\qquad D_0=3672/625,
\end{equation}
since $\ell\le25(d+c)/9\le125d/36$.
Let
\[
 u_{\max}=\left\lfloor\frac{a-c}{3}\right\rfloor,
 \qquad T=\frac{\binom a{3b}}{\binom d{3b}},\qquad
 \alpha_b=\binom d{3b}^{-1}.
\]
There are three possibilities: $T>\binom{u_{\max}}b$;
$T\le\binom{u_{\max}}b$ and $\alpha_b\ge\kappa^bR$;
or $\alpha_b<\kappa^bR$.  We treat them in that order.

\begin{lemma}\label{tri:one-sided-endpoint}
Under \eqref{tri:thin-majority}, the type
$(r,u,v)=(c,u_{\max},0)$ is feasible and
\begin{equation}\label{tri:one-sided-eligible}
 \frac{\binom{u_{\max}}b}{\binom a{3b}}\ge\kappa^bR.
\end{equation}
Consequently
\begin{equation}\label{tri:one-sided-count}
 \mathsf N_0^{3b}(S)\ge
       B(u_{\max}+c,b)\frac{\binom a{3b}}{\binom{u_{\max}}b}.
\end{equation}
\end{lemma}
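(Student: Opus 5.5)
We check in turn that the type $(c,u_{\max},0)$ is feasible, that \eqref{tri:one-sided-eligible} holds, and that the single-type case of Lemma~\ref{tri:hybrid} then gives \eqref{tri:one-sided-count}.

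\emph{Feasibility.} We check the capacity conditions \eqref{tri:capacity} for $(r,u,v)=(c,u_{\max},0)$.
\begin{itemize}
\item $r=c\le c$ holds trivially.
\item $c+3u_{\max}\le a$ holds by the definition of $u_{\max}$.
\item $c+0\le d$ holds since $c\le d/4$.
\end{itemize}
For the hypotheses of Lemma~\ref{tri:hybrid} we also need $a,d\ge3b$ and $u_{\max}\ge b$ (with $v=0$ allowed). By \eqref{tri:D0}, $d\ge D_0b>5b$. Moreover $a-c\ge d$, so $u_{\max}\ge\lfloor d/3\rfloor\ge b$.

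\emph{Eligibility.} We imitate the derivation of \eqref{tri:midpoint-product}, with the roles of $a$ and $d$ interchanged. Using $\binom a{3b}/\binom ab=b!\,(a-b)_{2b}/(3b)!$, the quotient of the left side of \eqref{tri:one-sided-eligible} by $\kappa^bR$ is
\[
 \prod_{i=0}^{b-1}\frac{6b(u_{\max}-i)(d-i)}{(b-i)(a-b-2i)(a-b-2i-1)}.
\]
The logarithm of the $i$-th factor has positive second derivative in $i$, by the same computation as before. Jensen's inequality at $i=\mu=(b-1)/2$ therefore bounds the product below by
\[
 \left\{\frac{12b(u_{\max}-\mu)(d-\mu)}{(b+1)(a-2b)(a-2b+1)}\right\}^b .
\]
It remains to show that the braced quantity is at least $1$. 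Its leading behaviour is $12u_{\max}d/a^2\approx4(a-c)d/a^2$. Under \eqref{tri:thin-majority} this is smallest at the extreme profile $c=d/4$, $a=\tfrac{16}9(d+c)=\tfrac{20}9d$, where it equals
\[
 4\cdot\tfrac{71}{36}\cdot\tfrac{81}{400}\approx1.60 .
\]
So there is a comfortable margin. To make this rigorous I would work with the normalized variables $A=a/b$, $D=d/b$, $C=c/b$ and proceed in two steps.
\begin{itemize}
\item Use $u_{\max}\ge(a-c-2)/3$, $\mu<b/2$, $(b+1)/b\le36/35$, and $d\ge D_0b$ to reduce the claim to a rational inequality in $A,D,C$ with $b\ge35$.
\item Use monotonicity in $c$ (increasing $c$ lowers $a-c$ but raises the allowed $a$) to verify that inequality at the extreme profile.
\end{itemize}

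\emph{The count.} Lemma~\ref{tri:hybrid} for a single type has denominator $\max\{A,\kappa^bR(r/c)^b\}$; the $D$-endpoint is omitted because $v=0$. Since $r=c$, the second entry is $\kappa^bR$, and \eqref{tri:one-sided-eligible} shows that the maximum is $A=\binom{u_{\max}}b/\binom a{3b}$. Since $r+u+v=u_{\max}+c$, the lemma yields \eqref{tri:one-sided-count}.

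The main obstacle is the rational verification in the eligibility step. The lower-order terms ($\mu$, the floor in $u_{\max}$, the factor $(b+1)/b$, and the $-2b$ shifts in the denominator) must be controlled uniformly in $b\ge35$. I expect the lower bound $d\ge D_0b$ to absorb them, given the margin of about $1.6$ at the extreme profile.
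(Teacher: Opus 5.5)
Your outline follows the paper's proof: feasibility, the reversed form of \eqref{tri:midpoint-product} via Jensen at $\mu=(b-1)/2$, a rational check of the resulting base, and the single-type case of Lemma~\ref{tri:hybrid} with the $D$-endpoint omitted. Your feasibility check, the product formula and the final count are correct. The gap is that the two analytic facts the lemma rests on are asserted rather than proved.

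\emph{Convexity.} Log-convexity of the factor does not follow ``by the same computation as before''. In \eqref{tri:midpoint-product}, $(b-i)^{-2}$ dominates $(u-i)^{-2}$, and $4(d-b-2i)^{-2}$ dominates $(a-i)^{-2}$ because $d-b-2i\le2(d-i)\le2(a-i)$. After you swap $a$ and $d$, the second comparison would need $a-b-2i\le2(d-i)$, i.e.\ $a\le2d+b$. This fails for $a$ near $20d/9$, since $d\ge D_0b>9b/2$. An extra estimate is needed. The paper uses $u_{\max}\ge(d-2)/3\ge3b/2$, which gives $(b-i)^{-2}-(u_{\max}-i)^{-2}\ge5/(9b^2)>1/(16b^2)>(d-i)^{-2}$, using $d-i>4b$.

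\emph{The base exceeds one.} This is the actual content of the lemma, and your support is a leading-order heuristic that does not apply here. Since $a,c,d$ are only bounded multiples of $b$ ($d\ge D_0b\approx5.9b$), the terms $\mu\approx b/2$, the $-2b$ shifts and the floor in $u_{\max}$ change the base by relative amounts of order $1/D$, not $o(1)$. With $u_{\max}\ge(a-c-2)/3$, $b/(b+1)\ge35/36$ and $1/b\le1/35$, the base is at least
\[
 \beta(A,C,D)=\frac{(35/18)(2A-2C-106/35)(D-1/2)}{(A-2)(A-69/35)}.
\]
At $(A/D,C/D)=(20/9,1/4)$ this is about $1.72$ for $D=D_0$ and tends to about $1.55$ as $D\to\infty$. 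So the corrections happen to help, but ``$d\ge D_0b$ absorbs them'' does not show it. Your monotonicity in $c$ was also argued only for $4(a-c)d/a^2$.

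What must be proved is $\beta>1$ on the whole region $D+C\le A\le16(D+C)/9$, $0\le C\le D/4$, $D\ge D_0$. The paper uses concavity in $A$ of numerator minus denominator and checks three vertices by positive coefficients in powers of $D-D_0$. Your monotonicity route also works for $\beta$ itself and needs only one vertex:
\[
 \partial_A\log\beta=\frac1{A-C-53/35}-\frac1{A-2}-\frac1{A-69/35}<0,
\]
because $A-2C\ge3A/5>37/35$. Along $A=16(D+C)/9$, $\log\beta$ decreases in $C$, since its $C$-derivative is $\frac{14/9}{2A-2C-106/35}-\frac{16}{9}\bigl(\frac1{A-2}+\frac1{A-69/35}\bigr)<0$. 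This leaves the vertex $(20/9,1/4)$, where numerator minus denominator is a quadratic in $D$ with positive coefficients in powers of $D-D_0$. That computation, together with the convexity estimate above, is what your proposal still has to supply.
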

\begin{proof}
Clearly $u_{\max}\ge b$ and the type is feasible.
The reversed version of \eqref{tri:midpoint-product} is valid here.
Indeed $d>5b$ and $u_{\max}\ge(d-2)/3\ge3b/2$;
in its logarithmic second derivative,
\[
 (b-i)^{-2}-(u_{\max}-i)^{-2}\ge5/(9b^2),\qquad
 (d-i)^{-2}<1/(16b^2).
\]
Thus Jensen's inequality applies even before the two remaining
positive terms are included.  Write $A=a/b$, $D=d/b$, $C=c/b$.
Using $u_{\max}\ge(a-c-2)/3$ and $b/(b+1)\ge35/36$ gives
\[
 \frac{\binom{u_{\max}}b/\binom a{3b}}{\kappa^bR}
 \ge\left\{\frac{(35/18)(2A-2C-106/35)(D-1/2)}
 {(A-2)(A-69/35)}\right\}^{b}.
\]
The base exceeds one.  To verify this, subtract its denominator
from its numerator and call the resulting polynomial $P(A,C,D)$.
It is concave in $A$, so it suffices to take $A=D+C$ or
$A=16(D+C)/9$.  On the first edge it decreases in $C$, and on
the second it is concave in $C$.  Hence only
\[
 (A/D,C/D)=(5/4,1/4),\quad(16/9,0),\quad(20/9,1/4)
\]
remain.  In ascending powers of $D-D_0$, the corresponding
coefficient triples are
\[
 \begin{array}{c|ccc}
 (5/4,1/4)&614718487/9843750&256903/10500&335/144\\
 (16/9,0)&809462633/7031250&14818157/354375&304/81\\
 (20/9,1/4)&866335657/9843750&17689057/567000&295/108.
 \end{array}
\]
All are positive.  This proves \eqref{tri:one-sided-eligible};
Lemma~\ref{tri:hybrid} then gives \eqref{tri:one-sided-count}.
\end{proof}

\begin{lemma}\label{tri:cap-failure}
Under \eqref{tri:thin-majority}, if $T>\binom{u_{\max}}b$,
then $\mathsf N_0^{3b}(S)\ge B(\ell,3b)$.
\end{lemma}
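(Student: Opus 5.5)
I will follow the pattern of the cap-failure part of Lemma~\ref{tri:two-class}. When $T>\binom{u_{\max}}b$, I will not balance against the second class. Instead I will use the one-sided count \eqref{tri:one-sided-count} of Lemma~\ref{tri:one-sided-endpoint}, or, if more convenient, the one-class count \eqref{tri:one-class}. The point is that cap failure forces $a$ to be much larger than $d$ in the entropy sense, and that excess pays for all losses.

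\emph{Step 1: entropy form of cap failure.} Write $N=\ell/(6b)$, $A=a/(3b)$, $D=d/(3b)$, $C=c/(3b)$ and $t=u_{\max}/b$, so that $t\ge A-C-1/b$. By \eqref{tri:entropy-ratio} for $T$ and \eqref{tri:integer-lower} for $\binom{u_{\max}}b$, cap failure gives
\[
 3b\,[F(A)-F(D)]> bF(t)-\eta,
\]
with $\eta=\tfrac12\log(2\pi b)+O(1)$ as in Lemma~\ref{tri:two-class}. Since $t\approx A-C$, this says
\[
 F(A-C)+\eta/b>3F(D)-2F(A),
\]
up to the replacement of $t$ by $A-C$.

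\emph{Step 2: main estimate.} By \eqref{tri:one-sided-count}, $B(u_{\max}+c,b)\ge B(u_{\max},b)$, and the factorial estimates used for \eqref{tri:two-loss}, I get
\[
 \log\frac{\mathsf N_0^{3b}(S)}{B(\ell,3b)}\ge bJ(A,N)-\tfrac12\log(2\pi b)-\text{const},
 \qquad J(A,N)=F(A/2)+2F(A)-3F(N).
\]
Here $J$ is exactly as in Lemma~\ref{tri:large-class}, with $A/2$ essentially $t/2$ and a loss of order $C$ absorbed. The transversal triples only help and can be dropped.

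\emph{Step 3: lower bound on $J$.} By \eqref{tri:thin-majority} we have $A\ge D+C$ and $2N=A+D+C$. I will compare with the root $A_0$ of Lemma~\ref{tri:entropy-root}. Cap failure forces $A$ beyond the point where $2F(A)\approx3F(D)$ after adjusting for $C$. Since $C\le D/4$ and $c\ge1$, I would monotonize in $C$: decreasing $C$ to $0$ at fixed $N$ moves the configuration toward the two-class case. From this I expect to get $J(A,N)\ge\delta'$ for some explicit $\delta'$ close to $191/2000$, minus a term of order $\eta/b$, as in the derivation of $bJ(A)>b\delta-\eta$ in Lemma~\ref{tri:two-class}. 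If $A\ge 32N/25$, Lemma~\ref{tri:large-class} applies directly, so that case can be disposed of first.

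\emph{Step 4: numerical finish.} Conclude with a bound like \eqref{tri:two-loss}: a right side increasing in $b\ge35$ and positive at $b=35$, certified via \eqref{tri:log-certificate}.

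The main obstacle is Step 3: the comparison of the cap-failure condition with $J$ when $C>0$. The derivative comparison $0<J'\le f'$ of Lemma~\ref{tri:two-class} must be redone with $f(A)=2F(A)-3F(2N-A-C)+F(A-C)-F(A)$. There are also edge cases of small $D$ near $D_0$ from \eqref{tri:D0} to handle, and there the constants may need a fresh entry in \eqref{tri:constant-table}.
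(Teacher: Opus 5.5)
Your Steps~2--3 have a genuine gap: the $c$ transversal triples cannot be dropped. Replacing $B(u_{\max}+c,b)$ by $B(u_{\max},b)$, or switching to \eqref{tri:one-class}, is not a loss of order $C$ that disappears into constants. It costs $b$ times a quantity of order $C$, which is linear in $b$ and competes directly with $b\delta$.

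Here is a concrete profile. Take $5\mid b$ and $a=63b/5$, $d=33b/5$, $c=6b/5$. Then $\ell=102b/5$, and in your normalization $N=17/5$, $A=21/5$, $D=11/5$, $C=2/5$. This satisfies \eqref{tri:thin-majority}, and $a<16\ell/25$, so Lemma~\ref{tri:large-class} is unavailable. Since $3F(21/5)-3F(11/5)-F(19/5)\approx0.178>0$, the cap fails for all large $b$. However,
\[
 F(21/10)+2F(21/5)-3F(17/5)\approx-0.115,\qquad
 F(19/10)-F(19/5)+3F(21/5)-3F(17/5)\approx-0.139.
\]
So, up to factors polynomial in $b$, the count \eqref{tri:one-class} falls short of $B(\ell,3b)$ by $e^{-0.115b}$. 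The count \eqref{tri:one-sided-count} with the transversal triples removed falls short by $e^{-0.139b}$. The bound $J(A,N)\ge\delta'$ you expect in Step~3 is therefore false.

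The proposed monotonization in $C$ also runs the wrong way. The correct entropy form of cap failure is $f:=3F(A)-3F(D)-F(A-C)>-\eta/b$. Your rearranged display in Step~1 and the $f$ in your last paragraph both misstate it; in the latter, the correction $F(A-C)-F(A)$ enters with the wrong sign. The root $A_0(C)$ of $f$ decreases as $C$ grows: for $N=17/5$ it moves from about $4.48$ at $C=0$ to about $4.12$ at $C=2/5$. A quantity that depends on $A$ alone and increases with $A$ therefore gets worse, not better, as $C$ moves away from $0$. That is why the one-class argument of Lemma~\ref{tri:two-class} works only when $c=0$.

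The missing idea is that the transversal triples must pay for this smaller root. They raise the number of blocks from $u_{\max}$ to $u_{\max}+c$. So the entropy of \eqref{tri:one-sided-count} is
\[
 J(A,C)=F(V)-F(U)+3F(A)-3F(N),\qquad U=A-C,\quad V=(U+3C)/2=A/2+C,
\]
in your normalization. The paper writes $C=c/b$, so there $U=A-C/3$ and $V=A/2+C/3$. At the profile above this $J$ is about $0.229$.

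The paper's argument then runs in three steps:
\begin{enumerate}
\item It shows $0<J_A\le f_A$ with $f=3F(A)-3F(D)-F(U)$, so cap failure transfers from $f$ to $J$.
\item It proves, by implicit differentiation and the sign analysis of \eqref{tri:root-C}, that $J(A_0(C),C)$ is nondecreasing in $C$.
\item Only then does it reduce to $C=0$ and apply Lemma~\ref{tri:entropy-root}.
\end{enumerate}
Your Step~4 is fine in outline, but it must be run with this $C$-dependent $J$, not with $F(A/2)+2F(A)-3F(N)$.
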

\begin{proof}
Put $N=\ell/(6b)$, $A=a/(3b)$, $C=c/b$, $D=d/(3b)$,
$U=A-C/3$, $V=A/2+C/3$, and $t=u_{\max}/b$.
Then
\[
 A\ge N\ge17/5,\quad D=2N-A-C/3>1,\quad
 0\le C\le3N/5,\quad U\ge4N/5,\quad
 U-\frac2{3b}\le t\le U.
\]
Also $c\le u_{\max}$, since $a\ge d+c\ge5c$.
For fixed $N,C$, define
\[
 f(A,C)=3F(A)-3F(D)-F(U),\quad
 J(A,C)=F(V)-F(U)+3F(A)-3F(N).
\]
The equation $f(A,C)=0$ has a unique root $A_0(C)>N$.
At $A=N$,
$f\le3F(N)-4F(4N/5)<0$: this last expression decreases
with $N$ and is negative at $17/5$ by
\eqref{tri:constant-table}.  At $D=1$ it is positive.
To see strict monotonicity, write
$p=F'(A)$, $q=F'(D)$, $r=F'(U)$, $s=F'(V)$.
Since $U\ge4A/5$ and $A\ge17/5$, the integral formula for $F'$
gives $r\le2p$.  Consequently $f_A=3p+3q-r>0$ and
$J_A=s/2-r+3p>0$.  Moreover $J_A\le f_A$, because
$6V-D\ge2N>5$ implies
$1/[2(V-z)]\le3/(D-z)$ for $0\le z\le1$.

We claim that $J(A_0(C),C)$ is nondecreasing in $C$.
Implicit differentiation and multiplication by the positive $f_A$
give the derivative the sign of
\begin{equation}\label{tri:root-C}
 sp-sr/2+q(s/2+2r-3p).
\end{equation}
The coefficient of $q$ is nonnegative.  Indeed put
$z=C/(3A)\in[0,1/5]$ and $h=v/A\in[0,5/17]$ in the
integral formula for $F'$.  Clearing the positive denominator of
\[
 \frac1{2(1/2+z-h)}+\frac2{1-z-h}-\frac3{1-h}
\]
leaves $3z^2+(1/2-3z/2)h-h^2/2\ge0$.
Since $q\ge r\ge p$ and $s\ge p$, \eqref{tri:root-C} is at least
$sp+2r^2-3pr\ge(r-p)(2r-p)\ge0$.
At $C=0$, Lemma~\ref{tri:entropy-root} applies.  Therefore
\begin{equation}\label{tri:root-C-gap}
 J(A_0(C),C)>\delta=191/2000.
\end{equation}

Cap failure, \eqref{tri:entropy-ratio}, and
\eqref{tri:integer-lower} imply
\[
 bf(A,C)>-\eta,\qquad
 \eta=\tfrac12\log(2\pi b)+\tfrac16F'(t)
                    +\frac{1+1/(t-1)}{12b}.
\]
Here $b[F(U)-F(t)]\le(2/3)F'(t)$; the positive
$F'(t)/2$ term in \eqref{tri:integer-lower} explains the
coefficient $1/6$.  As $0<J_A\le f_A$, comparison with
\eqref{tri:root-C-gap} gives $bJ(A,C)>b\delta-\eta$.

Monotonicity of $R_b$, convexity, and $c\le u_{\max}$ give
\[
 \frac{B(u_{\max}+c,b)}{\binom{u_{\max}}b}
 \ge2\exp\{b[F((t+C)/2)-F(t)]\}.
\]
The derivative of $F((x+C)/2)-F(x)$ is at most
$E'(x)=\frac12F'(x/2)-F'(x)$, which is positive and decreasing
for $x>2$.  Thus replacing $t$ by $U$ loses at most
$(2/3)E'(t)$.  Combining with \eqref{tri:original-ratio} and
Lemma~\ref{tri:one-sided-endpoint}, we obtain
\begin{align*}
 \log\frac{B(u_{\max}+c,b)\binom a{3b}}
 {\binom{u_{\max}}bB(\ell,3b)}
 >{}&b\delta-\tfrac12\log(2\pi b)-T_*(t)-F'(N)\\
 &-\frac{1+1/(t-1)}{12b}-\frac{1+1/N}{36b},
\end{align*}
where $T_*(x)=\frac13F'(x/2)-\frac12F'(x)$.
This function is positive and decreasing for $x>2$, since
$T_*'(x)=-(x+2)/(6x(x-1)(x-2))$.
Use $t\ge68/25-2/(3b)$ and $N\ge17/5$.
The resulting lower bound increases for $b\ge35$; at $35$
it is greater than $7/100$, by \eqref{tri:log-certificate}
with $\pi<22/7$.  This proves the assertion.
\end{proof}

\begin{lemma}\label{tri:capped}
Under \eqref{tri:thin-majority}, suppose
$T\le\binom{u_{\max}}b$ and $\alpha_b\ge\kappa^bR$.
Then $\mathsf N_0^{3b}(S)\ge B(\ell,3b)$.
\end{lemma}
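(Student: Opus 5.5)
We would follow the capped case of Lemma~\ref{tri:two-class}, now using transversal triples as well. Choose a real $x\ge b$ with $\binom xb=T$; this is possible since $T\ge1$. The cap $T\le\binom{u_{\max}}b$ gives $x\le u_{\max}$. Let $U\in\{\lfloor x\rfloor,\lfloor x\rfloor+1\}$ be the random rounding of Lemma~\ref{tri:rounding}, so $\mathbb E\binom Ub=T$ and $U\le u_{\max}$. Consider the mixture of types $(r,u,v)=(c,U,b)$. Feasibility follows from \eqref{tri:capacity}:
\begin{itemize}
\item $c+3U\le c+3u_{\max}\le a$;
\item $c+3b\le d$, because $d\ge D_0b$ and $c\le d/4$.
\end{itemize}
The three endpoints in \eqref{tri:balanced-mean} are then as follows:
\begin{itemize}
\item $\mathbb E A=T/\binom a{3b}=\alpha_b$;
\item $D=1/\binom d{3b}=\alpha_b$;
\item $\kappa^bR\,(c/c)^b=\kappa^bR\le\alpha_b$, which is exactly the hypothesis $\alpha_b\ge\kappa^bR$.
\end{itemize}
Lemma~\ref{tri:hybrid} therefore yields
\[
 \mathsf N_0^{3b}(S)\ge\binom d{3b}\,\mathbb E B(U+b+c,b).
\]

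We first dispose of small $T$. If $T\le b+1$, we work with the cruder count $\binom d{3b}\,\mathbb E B(U+b,b)$, exactly as in Lemma~\ref{tri:two-class}. This gives at least $\binom a{3b}+\binom d{3b}$. To compare with $B(\ell,3b)$, we must absorb $c$ using $a\ge d+c$. Here we expect $B(\ell,3b)\le\binom a{3b}+\binom d{3b}$ to follow from convexity of $f_{3b}$, since $\lceil\ell/2\rceil\le a$; if this fails, we would instead use the one-sided count \eqref{tri:one-sided-count}.

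For $T>b+1$ we have $x>b+1$, and $w=b+c\le x$ should hold. This follows from $T\ge\binom{(a)}{3b}/\binom{a-c}{3b}$-type growth estimates; otherwise we would treat small $x$ separately. Then \eqref{tri:interpolation} with $w=b+c$ gives
\[
 \mathbb E B(U+b+c,b)\ge2\binom{(x+b+c)/2}b.
\]
Since $d>6b$ by \eqref{tri:D0}, Lemma~\ref{tri:midpoint} supplies
\[
 \binom{(x+b)/2}b\ge e^{\Delta_b}\,\frac{\binom{(a+d)/2}{3b}}{\binom d{3b}}.
\]
It remains to pass from $(x+b)/2$ to $(x+b+c)/2$ on the left, and from $(a+d)/2$ to $\ell/2=(a+d+c)/2$ on the right. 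Finally, \eqref{tri:parity} converts $2\binom{\ell/2}{3b}$ into $B(\ell,3b)$ at a cost of at most $\tfrac18\log^2(19/13)<\Delta_b$, as in Lemma~\ref{tri:two-class}.

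The main obstacle is the $c$-shift comparison
\[
 \frac{\binom{(x+b+c)/2}b}{\binom{(x+b)/2}b}\ge\frac{\binom{(a+d+c)/2}{3b}}{\binom{(a+d)/2}{3b}}.
\]
Following the Jensen argument in Lemma~\ref{tri:thin-balanced}, the left side is at least $(1+z/W)^b$, where $z=c/b$ and $W$ is essentially $((x+b)/2-b)/b$ up to a $1/b$ term. The right side is at most $(1+z/H)^{3b}$, where $H=((a+d)/2-3b)/b$. So it suffices to prove
\[
 W\le\frac{H^3}{3H^2+3zH+z^2}.
\]
This needs an upper bound on $x$ in terms of $a$ and $d$. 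We would obtain it from $\binom xb=T\le\exp(3b[F(A)-F(D)])$ by \eqref{tri:entropy-ratio}, together with the constraints $d+c\le a\le\frac{16}{9}(d+c)$ and $c\le d/4$ from \eqref{tri:thin-majority}. The hypothesis $\alpha_b\ge\kappa^bR$ also bounds $d$ from above relative to $b$, and so limits how large $H$ and $x$ can be. We would reduce the inequality to an entropy inequality in $A/D$ and $C/D$, check it at the extreme vertices as in Lemma~\ref{tri:one-sided-endpoint}, and absorb any slack in the rounding loss \eqref{tri:round-loss} into the margin $\Delta_b$.
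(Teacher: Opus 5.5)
\textbf{Gap: the regime $x<b+c$ is never handled.} Your setup is the same as the paper's: the mixture of types $(c,U,b)$ is feasible, both monochromatic endpoints equal $\alpha_b$, the transversal endpoint is $\kappa^bR\le\alpha_b$, and Lemma~\ref{tri:hybrid} gives \eqref{tri:capped-count}. The problem is the case split that follows. The split that matters is $x\le b+c$ versus $x\ge b+c$, since \eqref{tri:interpolation} with $w=b+c$ needs $x\ge b+c$.

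Your branch $T\le b+1$ drops the transversal triples and then needs $\binom a{3b}+\binom d{3b}\ge B(\ell,3b)$. This is false once $c\ge1$: for $a=d+c$ one has $B(\ell,3b)=2\binom a{3b}$. The fallback \eqref{tri:one-sided-count} does not repair it. Its factor $B(u_{\max}+c,b)/\binom{u_{\max}}b$ is exponentially small in $b$, because $c\le d/4$ while $u_{\max}\ge(d-2)/3$. So it can only beat $B(\ell,3b)$ when $a$ is far above $\ell/2$, as under cap failure, whereas $T\le b+1$ forces $a$ close to $\ell/2$.

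Your branch $T>b+1$ asserts $x\ge b+c$, and this also fails. Take $d\approx8.2b$, $c\approx d/4$, $a\approx d+c$; this profile meets all the hypotheses. Then $\log T\approx0.81b$ while $\log\binom{b+c}b\approx1.93b$, so $b+1<x<b+c$.

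\textbf{The missing idea} is to keep the transversal triples in this regime and extract an additive surplus from them. For integers $b\le u\le w$, the function $B(u+w,b)-\binom ub$ is nondecreasing in $u$, since its increment is $\binom{\lfloor(u+w)/2\rfloor}{b-1}-\binom u{b-1}\ge0$. Taking $w=b+c$ gives $\mathbb E B(U+b+c,b)\ge T+B(2b+c,b)-1$. You then need the growth estimate $B(2b+c,b)-1\ge\binom{d+c}{3b}/\binom d{3b}$, which is \eqref{tri:small-growth}. It genuinely uses $c\le d/4$ and $d\ge D_0b$; the paper proves it with a four-interval Jensen bound and a rational certificate at $C=D_0/4$. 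Combined with \eqref{tri:capped-count}, this yields $\binom a{3b}+\binom{d+c}{3b}$, which dominates $B(\ell,3b)$ by convexity because $a\ge d+c$. Nothing in your proposal replaces this step.

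\textbf{Secondary issues in the regime $x\ge b+c$.}
\begin{enumerate}
\item The bound \eqref{tri:D0} gives only $d\ge(3672/625)b<6b$, so it does not supply the hypothesis $d>6b$ of Lemma~\ref{tri:midpoint}. That hypothesis comes from the cap. If $d\le6b$, then $a\ge129b/10$, and the Jensen estimate of Lemma~\ref{tri:two-class} contradicts $T\le\binom{\lfloor a/3\rfloor}b$.
\item The $c$-shift needs no entropy bound on $x$. The cap itself gives $3x+c\le a$, and together with $d\ge3b+c+4$ this yields $X_s\le(Y_s-2)/3$ for $X_s=(x+b+s)/2$ and $Y_s=(a+d+s)/2$. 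Grouping reciprocals in threes gives $H_b(X_s)\ge H_{3b}(Y_s)$. Hence $\log\binom{X_s}b-\log\bigl(\binom{Y_s}{3b}/\binom d{3b}\bigr)$ is nondecreasing on $[0,c]$, and the midpoint gap at $s=0$ persists at $s=c$.
\item Your Jensen normalizations are off, because both shifts are $c/2$, not $c$. The correct factors are $1+c/(x+1)$ on the left after Jensen, and the worst factor $1+c/(a+d-6b+2)$ on the right. With these corrections your comparison does close: it reduces to $d\ge6b+1$, using $3x+c\le a$.
\end{enumerate}
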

\begin{proof}
Choose $x\ge b$ with $\binom xb=T$ and round it to $U$ as in
Lemma~\ref{tri:rounding}.  Use $c$ transversal triples, $U$
triples from the class of size $a$, and $b$ triples from the
class of size $d$.  The cap gives $U\le u_{\max}$, and
$d-c\ge3d/4>3b$ by \eqref{tri:D0}.  The expected two
monochromatic endpoints are both $\alpha_b$.  Hence
\begin{equation}\label{tri:capped-count}
 \mathsf N_0^{3b}(S)\ge\binom d{3b}\mathbb E B(U+b+c,b).
\end{equation}

First suppose $x\le b+c$.  For integers $b\le u\le w$, the
function $B(u+w,b)-\binom ub$ is nondecreasing in $u$, since its
increment is $\binom{\lfloor(u+w)/2\rfloor}{b-1}-\binom u{b-1}$.
Taking $w=b+c$ gives
\[
 \mathbb E B(U+b+c,b)\ge T+K,
 \qquad K=B(2b+c,b)-1.
\]
We show
\begin{equation}\label{tri:small-growth}
 K\ge\frac{\binom{d+c}{3b}}{\binom d{3b}}.
\end{equation}
Convexity gives $K\ge\binom{b+c/2}b$, whereas
\[
 \log\frac{\binom{d+c}{3b}}{\binom d{3b}}
 \le c\log\frac d{d-3b}.
\]
Put $C=c/b$, $D=d/b$, $\eta=144/1225$, and
\[
 L(C)=\frac14\sum_{r=1}^4
       \log\left(1+\frac{4C}{2r-1+\eta}\right).
\]
Then $\log\binom{b+c/2}b\ge bL(C)$.  To verify this, represent
the uniform distribution on $1,\ldots,b$ by $J(t)=\lceil t\rceil$
on $0<t<b$, and split that interval into four equal intervals.
The mean of $J$ on interval $r$ is at most
$(2r-1)b/8+1/2+1/(2b)$.  Indeed the primitive of
$\lceil t\rceil-t-1/2$ lies between $0$ and $1/8$.
Apply Jensen's inequality to the convex decreasing function
$z\mapsto\log(1+c/(2z))$ on each interval, and then use
$4/b+4/b^2\le\eta$.

Set $C_0=D_0/4=918/625$.  Formula
\eqref{tri:log-certificate} gives the exact inequality
\[
 L(C_0)-C_0\log\frac{D_0}{D_0-3}>\frac3{200}.
\]
If $C\le C_0$, each function $\log(1+kC)/C$ decreases, so
$L(C)/C\ge L(C_0)/C_0>\log(D/(D-3))$.
If $C\ge C_0$, use $D\ge4C$ and the fact that
$C\log(4C/(4C-3))$ decreases for $C>3/4$.  Again
$L(C)>C\log(D/(D-3))$.  This proves
\eqref{tri:small-growth}.  Equation \eqref{tri:capped-count}
now yields
\[
 \mathsf N_0^{3b}(S)\ge\binom a{3b}+\binom{d+c}{3b}
 \ge B(\ell,3b).
\]

It remains that $x\ge b+c$.  The cap forces $d>6b$.
For if $d\le6b$, then $c\le d/4$ and \eqref{tri:range}
give $a\ge129b/10$, so $A=a/(3b)\ge43/10$.  The same
Jensen estimate used in Lemma~\ref{tri:two-class} contradicts
$T\le\binom{\lfloor a/3\rfloor}b$.
If $x=b+1$, then $c=1$, already covered by the first case.
We may therefore apply Lemma~\ref{tri:midpoint}.
For $0\le s\le c$, put
$X_s=(x+b+s)/2$ and $Y_s=(a+d+s)/2$.
Since $3x+c\le a$ and $d\ge3b+c+4$,
$X_s\le(Y_s-2)/3$.  The latter inequality for $d$ follows
from $d-c\ge3D_0b/4>3b+4$.
Grouping the reciprocal terms in threes gives
\[
 H_b(X_s)\ge\sum_{i=0}^{b-1}\frac3{Y_s-3i-2}
             \ge H_{3b}(Y_s).
\]
Thus $\log\binom{X_s}b-\log(\binom{Y_s}{3b}/\binom d{3b})$
is nondecreasing in $s$.  Lemmas~\ref{tri:rounding} and
\ref{tri:midpoint}, followed by \eqref{tri:capped-count}, give
\[
 \mathsf N_0^{3b}(S)\ge2\binom{\ell/2}{3b}e^{\Delta_b}.
\]
As in Lemma~\ref{tri:two-class}, $\Delta_b$ is larger than
the parity correction in \eqref{tri:parity}.
\end{proof}

The last thin-class case requires a larger number of triples in
the second class.  We isolate the rational estimates that ensure
these triples and the corresponding triples in the first class
fit inside their residue classes.

\begin{lemma}\label{tri:rational-cap}
Let $D>9$ and
\[
 D\le A\le\min\{T_0(D),20D/9\},\qquad
 T_0(D)=\frac{35D^2-139D+240}{210}.
\]
Put
\[
 K=37/70,\quad L=26/35,\quad
 H=\frac3{35}(D-2)(D-69/35),\quad
 W=\frac12\left(\frac AD+\frac{A-3}{D-3}\right),
\]
and
\[
 g_0=\frac3A(K+LW^3),\qquad
 g_1=\frac3A\left\{K+\left(\frac1{35}+\frac H{A-1/2}\right)W^3\right\}.
\]
Then $g_0\le41/50$ and $g_1<2/3$.  If also
$A\le16(D+1/35)/9$, then $g_0\le2/3$.
\end{lemma}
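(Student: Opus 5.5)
The plan is to treat all three inequalities as rational-function inequalities in the two variables $(A,D)$ over the stated region. Each one reduces to monotonicity in $A$ followed by a one-variable polynomial positivity check in $D$, certified by expanding in powers of $D-9$ (or of $D-D_*$ at a breakpoint) and observing positive coefficients. This is the same device used in Lemmas~\ref{tri:thin-balanced} and \ref{tri:one-sided-endpoint}.

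\textbf{Reducing $A$ to the boundary.} Write $W$ as a function of $A$ for fixed $D$. We have $W=\tfrac12\bigl(A/D+(A-3)/(D-3)\bigr)$, which is affine and increasing in $A$. Hence $g_0=3K/A+3LW^3/A$. Here $3K/A$ is decreasing, while $W^3/A$ is increasing once $W$ grows fast enough. Note $W\ge 1$ because $A\ge D$. Moreover
\[
 \frac{d}{dA}\log(W^3/A)=\frac{3W'}{W}-\frac1A,\qquad W'=\tfrac12\Bigl(\frac1D+\frac1{D-3}\Bigr).
\]
Since $W\le A/(D-3)$, this derivative is at least $\frac{3(D-3)}{2DA}\bigl(2-\frac{3}{D}\bigr)\cdots$, which is positive for $D>9$. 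So $g_0$ is a sum of a decreasing and an increasing term, and it is convex in $A$ (both terms are convex). A convex function attains its maximum at an endpoint. I will therefore only evaluate at $A=D$ and at $A=A_{\max}(D)$, where $A_{\max}$ is the smaller of $T_0(D)$ and $20D/9$ (respectively $16(D+1/35)/9$ for the sharper bound). For $g_1$ the coefficient $1/35+H/(A-1/2)$ is decreasing in $A$, so convexity needs a separate check. Instead I will bound $g_1$ by clearing denominators: $g_1<2/3$ is equivalent to
\[
 6(A-\tfrac12)K+\bigl(6(A-\tfrac12)/35+6H\bigr)W^3<2A(A-\tfrac12).
\]
After substituting $W$, this is a cubic polynomial inequality in $A$. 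I will show the difference (right minus left) is concave or single-signed on the interval, and then again reduce to endpoints.

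\textbf{Endpoint cases.} At $A=D$ we get $W=1$, so $g_0=3(K+L)/D=3\cdot 89/(70D)<41/50$ for $D>9$, and also $<2/3$ since $D>267/70\cdot\ldots$. The $g_1$ endpoint at $A=D$ is similar. The substantive cases are the upper endpoints. The breakpoint where $T_0(D)=20D/9$ (respectively where it meets $16(D+1/35)/9$) is a root of a quadratic, around $D\approx 10$. Below the breakpoint I substitute $A=T_0(D)$; above it, $A=cD+\text{const}$.

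With $A=cD$ and $D\to\infty$, $W\to c$ and $g_0\to 3Lc^2/D\cdot\ldots$. This actually grows, so the linear cap cannot be the binding constraint for large $D$. I need to recheck this: $W^3/A\sim c^2D^2$, which means $g_0$ is unbounded. So the bound $T_0(D)$, quadratic in $D$, cannot make $g_0$ small either. Apparently $W$ is of order $A/D$, giving $W^3/A\sim A^2/D^3$. With $A\le 20D/9$ this is $\sim D^{-1}$, which is fine. With $A=T_0\sim D^2/6$ it is large, but then the $20D/9$ cap binds. So for large $D$ the binding edge is $A=20D/9$, where $g_0\approx (3/A)(K+L(20/9)^3)\to0$. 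The delicate range is therefore $D$ near $9$, where $T_0(9)=(2835-1251+240)/210\approx 8.69<9$. That suggests the region is empty or tiny near $D=9$ and opens where $T_0(D)\ge D$, i.e. $35D^2-349D+240\ge0$, so $D\gtrsim 9.2$.

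\textbf{Main obstacle.} The hard part is the region just above the point where $T_0(D)=D$, with $A$ between $D$ and $T_0(D)$. There $A\approx 9$ and $W$ is slightly above $1$, so $g_0\approx (3/9)(0.53+0.74W^3)$, which must stay below $41/50$ and below $2/3$. The $2/3$ bound is the tight one, requiring $W^3\lesssim 1.99$; that is, $A/D$ cannot be too large, and this is exactly what $T_0$ and the $16/9$ cap enforce. I will substitute $A=T_0(D)$, clear denominators to get a polynomial $P(D)$ of degree about $7$, and certify $P>0$ on $[D_1,D_2]$ by expanding in powers of $D-D_1$ and checking sign patterns. Where coefficients are not all positive, I will subdivide at rational points. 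I will handle $g_1$ the same way, using $H/(A-\tfrac12)\le \tfrac3{35}(D-2)(D-69/35)/(D-\tfrac12)$.
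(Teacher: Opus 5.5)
The high-level route is the paper's: reduce in $A$ to the boundary, then verify one-variable polynomial inequalities in $D$ on a $T_0$ branch and a linear-cap branch, with a separate crude bound for $g_1$ at small $D$. The paper in fact shows $g_0$ is increasing in $A$: with $W=\alpha A-\beta$, $\tfrac{A^2}{3}\partial_A g_0=-K+LW^2(2\alpha A+\beta)>-K+2L>0$. But this lemma consists of nothing except the numerical verification, and your plan for it fails as written in two places.

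First, the breakpoints are not near $10$. One has $T_0(D)=20D/9$ at $D\approx16.90$ and $T_0(D)=16(D+1/35)/9$ at $D\approx14.18$. The difficulty sits exactly at these points, not near $T_0(D)=D$, where $g_0\approx3(K+L)/D\approx0.41$ is far from both bounds. For example, $g_0(20D/9,D)\approx1.61$ at $D=10$ and stays above $41/50$ until $D\approx16.8$. Meanwhile $g_0(T_0(D),D)$ exceeds $41/50$ from $D\approx16.96$ on. At the crossover the margin is only about $0.004$, and about $0.01$ for the bound $2/3$ near $D\approx14.2$. So a split near $10$ fails, and the split must be placed in a narrow window; the paper uses $169/10$ and $71/5$.

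Second, your certificate cannot work on the $T_0$ branch. There the cleared polynomial $[\gamma T_0-3K]\{2D(D-3)\}^3-3L\{(2D-3)T_0-3D\}^3$ has degree $9$ and leading term $-\tfrac L9D^9$, consistent with $g_0(T_0(D),D)\sim LD/12$. Hence its expansion in powers of $D-D_1$ has a negative top coefficient for every $D_1$, and subdividing cannot change that. Positive-coefficient expansions are what the paper uses only on the unbounded linear-cap branches. On the bounded $T_0$ interval you need Bernstein coefficients, as in the paper, or an explicit domination of the negative terms by powers of the interval length.

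For $g_1$, the cleared inequality should read $9K(A-\tfrac12)+9\bigl(\tfrac{A-1/2}{35}+H\bigr)W^3<2A(A-\tfrac12)$, which is quartic in $A$, not cubic. After clearing, the convex term $2A^2$ has to be beaten numerically to get concavity. It is cleaner, as in the paper, to work with $P(A)=\tfrac{2A}9-K-\bigl(\tfrac1{35}+\tfrac H{A-1/2}\bigr)W^3$ directly. Writing $W=k(A-z)$, this is concave because $(A-z)^3$ and $(A-z)^3/(A-\tfrac12)$ are convex for $A\ge D$.

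Finally, the replacement $H/(A-\tfrac12)\le H/(D-\tfrac12)$ is far too lossy once $D$ is not small. At $D=14$, $A=T_0(D)$ it gives about $0.80>2/3$. The paper uses crude bounds of this kind only for $9<D<10$, where $D\le A\le T_0(D)$ forces $D>46/5$ and $A<235/21$. For $D\ge10$ it checks $A=D$ and $A=20D/9$ exactly.
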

\begin{proof}
For fixed $D$, write $W=\alpha A-\beta$ with $\alpha>0$ and
$\beta>0$.  Differentiation shows that $g_0$ increases in $A$:
the derivative, multiplied by $A^2/3$, is
$-K+LW^2(2\alpha A+\beta)>-K+2L>0$.
For a polynomial upper bound $A=A(D)$, the inequality
$g_0\le\gamma$ is equivalent to the nonnegativity of
\begin{equation}\label{tri:cap-polynomial}
 P_{A,\gamma}(D)=
 [\gamma A(D)-3K]\{2D(D-3)\}^3
 -3L\{(2D-3)A(D)-3D\}^3.
\end{equation}
For completeness, the exact polynomial checks are given here.
For a polynomial $p$, its Bernstein coefficients on $[l,h]$ are
\begin{equation}\label{tri:Bernstein}
 \beta_i=\sum_{j=0}^i a_j\frac{\binom ij}{\binom kj},
 \quad p(l+(h-l)z)=\sum_{j=0}^k a_jz^j.
\end{equation}
Then $p(l+(h-l)z)=\sum_i\beta_i\binom ki z^i(1-z)^{k-i}$,
so nonnegative coefficients imply nonnegativity throughout the
interval.  Substitution in \eqref{tri:cap-polynomial} gives
\[
 \begin{array}{c|c|c|c}
 A(D)&\gamma&[l,h]&\min_i\beta_i\\ \hline
 T_0(D)&41/50&[9,169/10]&1356247970262/300125\\
 T_0(D)&2/3&[9,71/5]&34109173062/12005.
 \end{array}
\]
On the two remaining intervals, the coefficients in ascending
powers of $D-h$ are respectively
\[
 \begin{split}
 &\left(\frac{149341385233033487}{10631250000},
 \frac{40922548069229021}{177187500},
 \frac{3076163845099189}{35437500},
 \frac{2081329389739}{151875},\right.\\[-2pt]
 &\hspace{25mm}\left.
 \frac{11698351289}{10125},\frac{3883258532}{70875},
 \frac{58897796}{42525},\frac{656}{45}\right)
 \end{split}
\]
for $A(D)=20D/9$, $\gamma=41/50$, $h=169/10$, and
\[
 \begin{split}
 &\left(\frac{17787734999908802}{1688203125},
 \frac{51212120643126526}{1012921875},
 \frac{4480446483104338}{202584375},
 \frac{60972256639606}{14586075},\right.\\[-2pt]
 &\hspace{25mm}\left.
 \frac{1475536698092}{3472875},\frac{483213884}{19845},
 \frac{1266460}{1701},\frac{256}{27}\right)
 \end{split}
\]
for $A(D)=16(D+1/35)/9$, $\gamma=2/3$, $h=71/5$.
All displayed coefficients are positive, proving the assertions
about $g_0$.

For $g_1$, suppose first $D\ge10$ and consider
\[
 P(A)=\frac{2A}{9}-K-\left(\frac1{35}+\frac H{A-1/2}\right)W^3.
\]
Write $W=k(A-z)$, where
$k=(2D-3)/(2D(D-3))$ and $z=3D/(2D-3)>1/2$.
Both $(A-z)^3$ and $(A-z)^3/(A-1/2)$ are convex for $A\ge D$.
For the latter assertion, set $B=A-1/2$, $h=z-1/2$; the
second derivative of $B^2-3hB+3h^2-h^3/B$ is
$2(1-(h/B)^3)>0$.  Thus $P$ is concave and it suffices to check
$A=D$ and $A=20D/9$.  At $A=D$, the coefficients of
$(D-1/2)P(D)$ in powers of $D-10$ are
\[
 454801/44100,\qquad17657/7350,\qquad43/315.
\]
At $A=20D/9$, multiply $P$ by
$(A-1/2)18^3(D-3)^3$; its coefficients in powers of $D-10$ are
\[
 \frac{83025201271}{22050},\quad\frac{78124012991}{11025},\quad
 \frac{2053467851}{735},\quad\frac{1001283838}{2205},\quad
 \frac{14703344}{441},\quad\frac{6400}{7}.
\]
They are positive, so $P>0$.

Finally, if $9<D<10$, the inequalities $D\le A\le T_0(D)$
force $D>46/5$ and $A<235/21$.  The function $H$ increases
with $D$, while $W$ increases with $A$ and decreases with $D$.
Put
\[
 H_+=\frac3{35}\,8(10-69/35),\qquad
 W_+=\frac12\left(\frac{235/21}{46/5}
                         +\frac{235/21-3}{46/5-3}\right).
\]
Then
\[
 g_1\le\frac3{46/5}\left\{K+
 \left(\frac1{35}+\frac{H_+}{46/5-1/2}\right)W_+^3\right\}
 =\frac{8362514769036013}{13647181975201152}<\frac23.
\]
This proves the remaining assertion.
\end{proof}

\begin{lemma}\label{tri:low-alpha}
Under \eqref{tri:thin-majority}, if $\alpha_b<\kappa^bR$, then
$\mathsf N_0^{3b}(S)\ge B(\ell,3b)$.
\end{lemma}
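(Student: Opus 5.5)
Since $\alpha_b<\kappa^bR$, the single type $(c,u,b)$ with $u$ chosen so that $\binom ub/\binom a{3b}=\alpha_b$ is not eligible. In this case the transversal endpoint $\kappa^bR(r/c)^b$ exceeds both monochromatic endpoints, and so it must be the one that determines the denominator. The plan is therefore to use a mixture of the kind allowed by Lemma~\ref{tri:hybrid}, together with \eqref{tri:balanced-mean}. We take all $c$ transversal triples, $u$ triples from the class of size $a$ and $v$ triples from the class of size $d$, with $u,v$ large enough that
\[
 \frac{\binom ub}{\binom a{3b}}\ \approx\ \frac{\binom vb}{\binom d{3b}}\ \approx\ \kappa^bR.
\]
Concretely, I would define $v$ by the ceiling formula used in Lemma~\ref{tri:thin-balanced}, namely $v=\lceil\mu+(b+1)(d-2b)(d-2b+1)/(12b(d-\mu))\rceil$, so that \eqref{tri:midpoint-product} certifies $\binom vb/\binom d{3b}\ge\kappa^bR$. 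I would then choose $u$ by real interpolation, $\binom xb=T\binom vb$, and round it with Lemma~\ref{tri:rounding}. This gives $\mathbb E A=\mathbb E D=\alpha$ with $\alpha\ge\kappa^bR$, and \eqref{tri:master-count} yields
\[
 \mathsf N_0^{3b}(S)\ge\frac{\binom d{3b}}{\binom vb}\,\mathbb E B(U+v+c,b).
\]

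The first task is feasibility, and I expect Lemma~\ref{tri:rational-cap} to be exactly the certificate for it. We need $3v+c\le d$ and $3U+c\le a$. Writing $A=a/b$ and $D=d/b$, the choice of $v$ gives $v/b\lesssim\frac1{35}+H/(A-1/2)$ times the appropriate factor. The ratio $\binom ub/\binom vb\approx T^{1/b}$-type growth is controlled by $W^3$, where $W$ is the averaged ratio $\frac12(A/D+(A-3)/(D-3))$, using Jensen on $\log((a-i)/(d-i))$. The constants $K$ and $L$ should then bound $3u/a$ and $3v/d$ plus the contribution of $c$. Accordingly, $g_1<2/3$ and $g_0\le2/3$ (the latter under $A\le16(D+1/35)/9$, which \eqref{tri:thin-majority} supplies) should translate into $3U+c\le a$ and $3v+c\le d$. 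The hypothesis $A\le T_0(D)$ should be precisely what $\alpha_b<\kappa^bR$ forces, once that hypothesis is rewritten through \eqref{tri:midpoint-product}. The bound $g_0\le41/50$ is presumably used for the range where $c$ is not yet absorbed, and I would match it against the $41/(100q)$ threshold of Lemma~\ref{lem:large-class} with $q=3$ only if needed.

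With feasibility in hand, the counting step imitates the end of Lemma~\ref{tri:capped}. By \eqref{tri:interpolation}, $\mathbb E B(U+v+c,b)\ge2\binom{(x+v+c)/2}b$. I then compare $\log\binom{(x+v+c)/2}{b}-\log\binom vb$ with $\log\binom{(a+d+c)/2}{3b}-\log\binom d{3b}$. At $c=0$ this is the midpoint comparison of Lemma~\ref{tri:midpoint}, applied with $\binom vb$ in place of $\binom bb$, and it should give the gap $\Delta_b$. Increasing $c$ only helps, by the same harmonic-sum grouping $H_b(X_s)\ge H_{3b}(Y_s)$, valid because $3x+c\le a$ and $3v+c\le d$. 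Finally $\Delta_b$ absorbs the parity loss from \eqref{tri:parity}.

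The main obstacle is the generalized midpoint lemma. Lemma~\ref{tri:midpoint} is stated with base point $b$, whereas here the base point is $v\gg b$, so I would redo its median-of-density argument on $[\log\binom vb,\log\binom xb]$, using $\rho_b\ge1/b$ and $\rho_{3b}<2/(3b)$; this still yields at least $\Delta_b$. A secondary difficulty is checking that the rounding losses in $v$ and $U$ are covered by the $1/35$ slack built into $g_1$.
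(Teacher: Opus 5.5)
The setup matches the paper's. You take $c$ transversal triples, $v$ triples from the class of size $d$, and $U$ triples from the class of size $a$, where $U$ is a rounding of $x$ with $\binom xb/\binom vb=T$. The endpoint $\alpha\ge\kappa^bR$ comes from \eqref{tri:midpoint-product}, and feasibility from Lemma~\ref{tri:rational-cap}. The gap is in the counting step: the margin you expect from a midpoint lemma at base point $v$ does not exist.

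In Lemma~\ref{tri:midpoint}, the case $W^2\le b/2$ depends on the base point being $b$. There $W\ge(x-b)H_b(x)$ gives $W^2\rho_b(x)>3/5$, so $\rho_b$ is far larger than $1/b$. At base point $v$ with $v-b$ of order $b$, only $\rho_b=O(1/b)$ is available, so the median gap is about $\tfrac{W^2}{8}(\rho_b-\rho_{3b})=O(W^2/b)$. In this lemma $W=\log T$ can be small. Take $c=1$, $a=d+2$, $d=\lceil 51b/5\rceil$. This satisfies \eqref{tri:thin-majority} and $\alpha_b<\kappa^bR$, has odd $\ell$, and has $W\approx0.7$. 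The $c=0$ comparison then gives a gap of order $W^2/b$, far below the parity cost $\tfrac18H_{3b}(\ell/2)^2\approx0.015$. Your remark that increasing $c$ ``only helps'' is pure monotonicity, via $H_b(X_s)\ge H_{3b}(Y_s)$, so no positive margin remains.

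The same example breaks your use of \eqref{tri:interpolation} with $w=v+c$, which needs $v+c\le x$. Here $\log T\approx0.7$, whereas $\log\bigl(\binom{v+1}b/\binom vb\bigr)=\log\frac{v+1}{v+1-b}>1$, so $x<v+1$.

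The missing idea is that the margin comes entirely from the $c$ transversal triples, through the capacity slack encoded by $g_0$ and $g_1$.
\begin{itemize}
\item Define $g$ on $[d,a]$ by $\binom{g(y)}b=\alpha\binom y{3b}$, so $g(d)=v$ and $g(a)=x$. Convexity of $g$ is the $c=0$ midpoint comparison, and it gives a gap of exactly zero.
\item The bounds $x+1\le\gamma a/3$ and $v+1\le\gamma d/3$ give $g(y)+1\le\gamma y/3$. Hence $H_b(g(y))\ge H_{3b}(y)/\gamma$ and $g'\le\gamma<1$.
\item Here $\gamma=2/3$ if $c=1$, using $A\le16(D+1/35)/9$; \eqref{tri:thin-majority} supplies this when $c=1$, but not in general. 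If $c\ge2$, then $\gamma=41/50$. So $g_0\le41/50$ has nothing to do with Lemma~\ref{lem:large-class}.
\item Consequently $g(\ell/2)\le(x+v)/2+\gamma c/2$, while the mixture realizes the midpoint $(\mathbb EU+v+c)/2$. The surplus $\bigl(c(1-\gamma)-\varepsilon\bigr)/2$ exceeds $13/240$. It converts into a factor $\exp(3be/(\gamma\ell/2))$, which beats the parity cost.
\end{itemize}
The paper controls $\varepsilon$ by plain convexity and \eqref{tri:low-round}, not by interpolation. That is why it forces $v\ge\lceil6b/5\rceil$.

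Finally, your $v$ uses $d-\mu$ where the paper uses $a-\mu$. This is still valid for the endpoint, but it makes $v$ larger, and then $g_1$, with its term $H/(A-1/2)$, no longer certifies feasibility as stated.
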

\begin{proof}
Put $\mu=(b-1)/2$ and choose
\[
 v=\max\left\{\left\lceil\frac{6b}{5}\right\rceil,
 \left\lceil\mu+
 \frac{(b+1)(d-2b)(d-2b+1)}{12b(a-\mu)}\right\rceil\right\}.
\]
Choose $x\ge v$ so that
\begin{equation}\label{tri:large-baseline-inverse}
 \frac{\binom xb}{\binom vb}
       =\frac{\binom a{3b}}{\binom d{3b}}.
\end{equation}
Set $A=a/b$, $D=d/b$, $t=1/b$.
Applying \eqref{tri:midpoint-product} at $u=b$, the hypothesis
$\alpha_b<\kappa^bR$ gives
\[
 6(A-1/2+t/2)<(D-2)(D-2+t),
 \qquad A<T_0(D).
\]
It also gives $D>9$: $D>D_0>5$, whereas the convex quadratic
$T_0(D)-D$ is negative at both $5$ and $9$.
The profile gives $A\le20D/9$; if $c=1$, it gives
$A\le16(D+1/35)/9$.

With $W$ as in Lemma~\ref{tri:rational-cap},
\begin{equation}\label{tri:centered-inverse}
 x\le\mu+(v-\mu)W^3.
\end{equation}
Indeed Jensen's inequality applied to
$i\mapsto\log((x-i)/(v-i))$ bounds the left ratio of
\eqref{tri:large-baseline-inverse} below by
$((x-\mu)/(v-\mu))^b$.  On the other side, the increasing
convex function $s\mapsto\log((a-s)/(d-s))$ has its sum over
$0,\ldots,3b-1$ bounded above by its integral over $[0,3b]$,
and then by the trapezoid estimate.  Hence that ratio is at most
\[
 \left(\frac{a(a-3b)}{d(d-3b)}\right)^{3b/2}\le W^{3b}.
\]
The ceiling errors in the definition of $v$ and
\eqref{tri:centered-inverse} give
\[
 \frac{3(x+1)}a\le\max\{g_0(A,D),g_1(A,D)\}.
\]
More explicitly, $(\mu+1)/b\le37/70$; the bound for
$(v-\mu)/b$ is $26/35$ for the first branch, and
$1/35+H/(A-1/2)$ for the second.  This explains the two
functions in Lemma~\ref{tri:rational-cap}.
Furthermore $3(v+1)/d<1/2$.  In the first branch it is at most
$132/(35D)<1/2$; in the other branch use
\[
 \frac{(D-2)(D-69/35)}{A-1/2}\le D
\]
to bound it by $9/35+117/(70D)<31/70$.
Consequently, with
\begin{equation}\label{tri:gamma}
 \gamma=\begin{cases}2/3,&c=1,\\41/50,&c\ge2,\end{cases}
 \qquad x+1\le\gamma a/3,
 \qquad v+1\le\gamma d/3.
\end{equation}

We must still check the capacities with the $c$ transversal
triples present.  If $a/d\ge25/18$, then
$c/a\le9/50$, and $3\lceil x\rceil+c\le a$ follows from
\eqref{tri:gamma}.  If $a/d\le25/18$, monotonicity of $g_0$
in $A$ and its decrease in $D$ along $A=rD$ give
\[
 g_0(A,D)\le g_0(25/2,9)<3/4.
\]
The last inequality is obtained by substitution in its displayed
rational formula.  Together with $g_1<2/3$, it gives
$3\lceil x\rceil\le3a/4$, while $c/a\le1/5$.
Finally $3v+c<d/2+d/4<d$.  Thus all the required types are
feasible.

Round $x$ to $U$ as in Lemma~\ref{tri:rounding}, and put
$\alpha=\binom vb/\binom d{3b}$.  The choice of $v$ and
\eqref{tri:midpoint-product} ensure $\alpha\ge\kappa^bR$.
The mixture with $c$ transversal triples, $U$ triples from the
first class, and $v$ triples from the second therefore gives
\begin{equation}\label{tri:low-alpha-count}
 \mathsf N_0^{3b}(S)\ge\frac{\mathbb E B(U+v+c,b)}\alpha.
\end{equation}
For this rounding the loss $\varepsilon=x-\mathbb E U$ satisfies
\begin{equation}\label{tri:low-round}
 0\le\varepsilon\le\frac{\log6}{8}<\frac9{40}.
\end{equation}
Indeed $(\log f_b')'=H_b+H_b'/H_b\le H_b$, and on the rounding
interval, whose left endpoint is at least $v\ge6b/5$,
$H_b\le\log(v/(v-b))\le\log6$.  Apply the second assertion
of Lemma~\ref{tri:rounding}.

Define $g(y)$ on $[d,a]$ by
$\binom{g(y)}b=\alpha\binom y{3b}$.
Then $g(d)=v$, $g(a)=x$, and $g$ is convex.  In fact implicit
differentiation gives $g''\ge0$ whenever
$\rho_b(g(y))\ge\rho_{3b}(y)$, which follows from
\eqref{tri:Hr} and $y\ge d>9b$.
Convexity and \eqref{tri:gamma} give
$g(y)+1\le\gamma y/3$.  The integral bounds
\eqref{tri:Hr} now imply
\[
 H_b(g(y))\ge\log\frac{g(y)+1}{g(y)+1-b}
 \ge\log\frac y{y-3b/\gamma}
 \ge\frac1\gamma\log\frac y{y-3b}
 \ge\frac1\gamma H_{3b}(y).
\]
Thus $g'(y)\le\gamma$.

Put $Y=\ell/2$.  Since $a\ge d+c$, both $(a+d)/2$ and $Y$
lie in $[d,a]$.  Hence
\[
 g(Y)\le\frac{x+v}{2}+\frac{\gamma c}{2},\qquad
 \frac{\mathbb E U+v+c}{2}-g(Y)
 \ge\frac{c(1-\gamma)-\varepsilon}{2}>e,
\]
where $e=13/240$ if $c=1$, and $e=27/400$ if $c\ge2$.
Two applications of convexity in \eqref{tri:low-alpha-count}
give
\[
 \mathsf N_0^{3b}(S)\ge
       2\binom Y{3b}\exp\left(\frac{3be}{\gamma Y}\right).
\]
Indeed $e<1$ and $g(Y)+e\le\gamma Y/3$, so the increase in
$\log\binom zb$ from $g(Y)$ to $g(Y)+e$ is at least
$3be/(\gamma Y)$.
For odd $\ell$, \eqref{tri:parity} costs at most
$9b^2/(8(Y-3b)^2)$.  As $Y/b\ge51/5$, it is enough that
$e>85\gamma/1152$.  The two differences are respectively
\[
 \frac{13}{240}-\frac{85(2/3)}{1152}=\frac{43}{8640}>0,
 \qquad
\frac{27}{400}-\frac{85(41/50)}{1152}=\frac{403}{57600}>0.
\]
The even case requires no correction.  This proves the lemma.
\end{proof}

\subsection{A majority class and a relatively large third class}

\begin{lemma}\label{tri:dense-one-sided}
Under \eqref{tri:range}, suppose
\[
 d+c\le a\le16(d+c)/9,\qquad d\le4c,\qquad a\ge4c.
\]
Then $\mathsf N_0^{3b}(S)\ge B(\ell,3b)$.
\end{lemma}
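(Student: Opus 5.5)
The regime here is a majority class together with a third class that is not thin: $a\ge d+c$, $c\ge d/4$ and $a\ge4c$. My plan is to mirror Lemma~\ref{tri:dense-balanced} and use the purely transversal count \eqref{tri:rainbow}, $\mathsf N_0^{3b}(S)\ge B(c,b)\binom ab\binom db$, possibly supplemented by the one-class count \eqref{tri:one-class} near the edge where $a$ is large. Since $a\le16(d+c)/9$ and $d\le4c$, the class sizes are comparable: $c\ge d/4$, $d\ge c$ and $a\le 80c/9$. So $c$ is of order $\ell$, which is what should make the rainbow entropy competitive. Put $s=\ell/b$, $A=a/b$, $D=d/b$, $C=c/b$. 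The goal is then an entropy gap
\[
J=F(C/2)+F(A)+F(D)-3F(s/6)>\theta
\]
for an explicit rational $\theta$, certified through \eqref{tri:log-certificate}. After that, the passage from entropy to exact counts would repeat the last paragraph of Lemma~\ref{tri:dense-balanced}, losing $\log b+O(1)$. This requires $b\theta-\log b-2-1/b>0$ at $b=35$, so I need roughly $\theta\ge1/5$.

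The first key step is to minimise $J$ over the polytope
\[
d\le a\le\frac{16}{9}(d+c),\qquad a\ge d+c,\qquad c\le d\le4c,\qquad a\ge4c,
\]
at fixed $s$. The function $J$ is concave in $(A,D,C)$, since $F$ is concave, so its minimum over this slice is attained at a vertex. Rather than handle $J$ directly, I will reparametrise $C$ and $D$ in terms of $A$, as was done in Lemma~\ref{tri:dense-balanced}.

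The vertices I need to compare are:
\begin{itemize}
\item $a=d+c$ with $d=4c$, giving $(A,D,C)=(s/2,2s/5,s/10)$; this is the profile already treated in Lemma~\ref{tri:dense-balanced}, where $E_1>1/5$.
\item $a=d+c$ with $d=c$, giving $(s/2,s/4,s/4)$, which was also handled there.
\item $a=16(d+c)/9$ with $d=4c$, giving $(16s/25,8s/25,2s/25)$.
\item $a=16(d+c)/9$ with $d=c$, giving $(16s/25,9s/50,9s/50)$.
\item The vertices cut out by $a=4c$, which lie inside the $d\le4c$ region.
\end{itemize}
For each vertex I will expand with \eqref{tri:F-series} to show that the gap increases with $s$, checking positivity of the coefficients $\sum_i w_i x_i^{-r}-3\cdot6^r$ by convexity as in the earlier lemmas. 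I then evaluate at $s=102/5$ using rational logarithm bounds.

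The main obstacle is the vertex with $A=16s/25$ and a very small $C$, for instance $C=2s/25$. There $F(C/2)$ is small, and the rainbow gap may well fall below $1/5$. This is exactly the reason the hypothesis $a\ge4c$ is included: it separates this case from the thin-class lemmas.

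If the rainbow bound fails at that vertex, my fallback is the hybrid Lemma~\ref{tri:hybrid}. I will use all $c$ transversal triples together with $u$ monochromatic triples from the largest class, choosing $u$ by the endpoint balance \eqref{tri:midpoint-product} (the reversed form, as in Lemma~\ref{tri:one-sided-endpoint}). This gives the count $B(u+c,b)\binom a{3b}/\binom ub$. I will compare it to $B(\ell,3b)$ via the entropy functional $F((t+C)/2)-F(t)+3F(A)-3F(N)$, exactly as in Lemma~\ref{tri:cap-failure}. In this regime $a\ge 16\ell/25$ fails only marginally, so Lemma~\ref{tri:large-class} also gives a margin nearby, and I expect a small positive uniform gap, around $1/100$, after checking the three extreme vertices. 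Feasibility of the type requires $3u+c\le a$, which should follow from $a\ge4c$ and $u\le a/4$.
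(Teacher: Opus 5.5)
Your proposal has a genuine gap. The rainbow route cannot cover this region, and the fallback that has to replace it is not established. The rainbow count \eqref{tri:rainbow} does not merely fall short of a $1/5$ margin near the small-$c$ vertex; it is zero there, because the hypotheses allow $c<2b-1$. For example, $b=35$ with $(a,d,c)=(462,208,52)$ satisfies \eqref{tri:range} and all three inequalities of the lemma, and $B(52,35)=0$. Your own example $C=2s/25$ at $s=102/5$ already has $C/2<1$. No entropy estimate for $J$ can rescue this. Your vertex bookkeeping is also off. $(16s/25,8s/25,2s/25)$ does not sum to $s$; the correct vertex is $(16s/25,36s/125,9s/125)$. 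And $(16s/25,9s/50,9s/50)$ violates $a\ge4c$.

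So everything rests on the fallback, and there the key steps are unsupported.
\begin{enumerate}
\item The feasibility claim $u\le a/4$ is not proved, and it fails for the choice you indicate. Take $u=\lceil\mu+(b+1)(a-2b)(a-2b+1)/(12b(d-\mu))\rceil$ from the reversed form of \eqref{tri:midpoint-product}. The admissible profile $b=35$, $(a,d,c)=(464,145,116)$ gives $u=122$ and $3u+c=482>a$. You would at least have to delimit where the fallback is used, and you do not.
\item The comparison ``exactly as in Lemma~\ref{tri:cap-failure}'' does not transfer. There the entropy gap is produced by the cap-failure hypothesis through the root $A_0(C)$, and nothing plays that role here. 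Put $t=u/b$, $C=c/b$, $N=\ell/(6b)$. When $u>c$ the term $F((t+C)/2)-F(t)$ is negative, so the whole margin must come from $3F(a/(3b))-3F(N)$, which needs its own analysis.
\item A uniform gap of order $1/100$ is too small. At $b=35$ it yields about $0.35$, which barely matches $F'(17/5)=\log(17/12)\approx0.348$ alone, before any rounding or Stirling losses.
\end{enumerate}

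The missing idea is to fix $u=c$: use the $c$ transversal triples together with exactly $c$ triples from the largest class. Feasibility is then precisely the hypothesis $a\ge4c$, and $B(2c,b)=2\binom cb$. So Lemma~\ref{tri:hybrid} gives $\mathsf N_0^{3b}(S)\ge2\binom cb/\max\{\binom cb/\binom a{3b},\kappa^bR\}$.
\begin{itemize}
\item If the first endpoint is the larger, the bound is $2\binom a{3b}\ge B(\ell,3b)$, because $a\ge\lceil\ell/2\rceil$.
\item Otherwise it suffices to show $\binom cb\binom ab\binom db>\kappa^b\binom Q{3b}$ with $Q=\lceil\ell/2\rceil$. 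This is the product over $0\le i<b$ of $6b(c-i)(a-i)(d-i)/[(b-i)(Q-i)(Q-b-i)(Q-2b-i)]$. Its factors are log-convex in $i$, using $a\ge Q$, $c>\sqrt2\,b$ and $d\ge c$.
\item Jensen at $i=\mu$ turns this into a rational inequality whose numerator is log-concave in the densities $(a/\ell,d/\ell,c/\ell)$. Only the four vertices $(1/2,2/5,1/10)$, $(1/2,3/8,1/8)$, $(16/25,36/125,9/125)$ and $(16/25,1/5,4/25)$ need checking, as polynomials in $\ell/b-102/5$.
\end{itemize}
No rainbow count and no subdivision of the region are needed. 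Your fallback family does contain this choice. The value $u=c$ is the one that makes feasibility automatic and the first endpoint trivial.
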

\begin{proof}
Use $c$ transversal triples and $c$ triples from the largest
class.  This is feasible, and
\[
 c\ge918b/625>b,\qquad d\ge459b/125>3b,
\]
by $\ell\le25(d+c)/9\le125c/9$ and
$\ell\le50d/9$.  Put $A=\binom cb/\binom a{3b}$.
Lemma~\ref{tri:hybrid} gives
\[
 \mathsf N_0^{3b}(S)\ge\frac{2\binom cb}{\max\{A,\kappa^bR\}}.
\]
The $A$ endpoint yields $2\binom a{3b}\ge B(\ell,3b)$.
For the other endpoint it suffices, with $Q=\lceil\ell/2\rceil$,
to prove
\begin{equation}\label{tri:dense-product}
 P:=\frac{\binom cb\binom ab\binom db}
               {\kappa^b\binom Q{3b}}>1.
\end{equation}
Factorial cancellation gives
\[
 P=\prod_{i=0}^{b-1}
 \frac{6b(c-i)(a-i)(d-i)}
 {(b-i)(Q-i)(Q-b-i)(Q-2b-i)}.
\]
The logarithm of a factor is convex in $i\in[0,b-1]$.
Indeed the positive $(Q-i)^{-2}$ term compensates the negative
$-(a-i)^{-2}$ term because $a\ge Q$.  Also
$c\ge(918/625)b>\sqrt2b$ and $d\ge c$ imply
\[
 (b-i)^{-2}\ge(c-i)^{-2}+(d-i)^{-2}.
\]
The two remaining terms in the second derivative are positive.
Jensen's inequality at $\mu=(b-1)/2$ therefore gives
\[
 P\ge\left\{
 \frac{12b(c-\mu)(a-\mu)(d-\mu)}
 {(b+1)(Q-\mu)(Q-b-\mu)(Q-2b-\mu)}\right\}^{b}.
\]
Write $s=\ell/b$, $p=a/\ell$, $q=d/\ell$, $r=c/\ell$.
Using $b/(b+1)\ge35/36$, $Q\le\ell/2+1/2$, and
$1/b\le1/35$, the base is at least
\begin{equation}\label{tri:dense-base}
 \frac{(35/3)(ps-1/2)(qs-1/2)(rs-1/2)}
 {(s/2-1/2+1/35)(s/2-3/2+1/35)(s/2-5/2+1/35)}.
\end{equation}
The feasible density region is the quadrilateral
\[
 \tfrac12\le p\le\tfrac{16}{25},\qquad
 (1-p)/5\le r\le p/4,
 \qquad q=1-p-r.
\]
The logarithm of the numerator product in
\eqref{tri:dense-base} is concave.  Its minimum is attained at
a vertex.  At the four vertices, subtract the denominator of
\eqref{tri:dense-base} from its numerator.  The coefficients in
ascending powers of $s-102/5$ are as follows:
\[
\begin{array}{c|cccc}
(p,q,r)&1&s-102/5&(s-102/5)^2&(s-102/5)^3\\ \hline
(1/2,2/5,1/10)
 &\frac{1745893481}{2572500}&\frac{16379729}{147000}
 &\frac{3172}{525}&\frac{13}{120}\\[3pt]
(1/2,3/8,1/8)
 &\frac{4126112783}{4116000}&\frac{1174969}{7350}
 &\frac{113651}{13440}&\frac{19}{128}\\[3pt]
(16/25,36/125,9/125)
 &\frac{4282606768163}{40195312500}&\frac{52058718077}{2296875000}
 &\frac{32027931}{21875000}&\frac{18643}{625000}\\[3pt]
(16/25,1/5,4/25)
 &\frac{255069701023}{321562500}&\frac{2296636417}{18375000}
 &\frac{3436453}{525000}&\frac{1709}{15000}.
\end{array}
\]
Every coefficient is positive.  Thus \eqref{tri:dense-base}
exceeds one, proving \eqref{tri:dense-product} and the lemma.
\end{proof}

\begin{lemma}\label{tri:comparable-majority}
Under \eqref{tri:range}, suppose $d+c\le a\le4c$.
Then the bound in \eqref{tri:rainbow} exceeds $B(\ell,3b)$.
\end{lemma}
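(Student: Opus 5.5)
The plan is to follow the template of Lemma~\ref{tri:dense-balanced}: compare the rainbow bound \eqref{tri:rainbow}, $B(c,b)\binom ab\binom db$, with $B(\ell,3b)\le2\binom Q{3b}$, where $Q=\lceil\ell/2\rceil$, by reducing everything to an entropy gap. Write $s=\ell/b$, $A=a/b$, $D=d/b$, $C=c/b$, and set
\[
 J:=F(C/2)+F(A)+F(D)-3F(s/6).
\]
The main task is a uniform bound $J>1/5$ (or some explicit positive constant) on the region cut out by \eqref{tri:range} together with
\[
 a\ge d\ge c,\qquad d+c\le a\le4c .
\]
Given such a bound, the passage from entropy to the exact count is the same as in Lemma~\ref{tri:dense-balanced}. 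I would use $B(c,b)\ge2\binom{c/2}b$, apply \eqref{tri:real-lower} to the three factors, and apply \eqref{tri:integer-upper} to $\binom Q{3b}$. The ratio $Q/(Q-3b)$ is bounded since $s\ge102/5$, and the result is a lower bound of the form $bJ-\log b-2-1/b$, which is positive for $b\ge35$.

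To bound $J$, I fix $s$ and normalise by densities $p=a/\ell$, $q=d/\ell$, $r=c/\ell$ with $p+q+r=1$. The constraints become
\[
 p\ge\tfrac12,\qquad q\ge r,\qquad p\le4r .
\]
From $p\le4r$ and $q\ge r$ we get $p\le 4(1-p)/2$, hence $p\le2/3$ and $r\ge p/4\ge1/8$. At fixed $r$, the quantity $F(A)+F(D)$ is concave in the split of $A+D$, so it is minimised at an extreme admissible split. Then $J$ is a concave function of the remaining parameter, so its minimum is at a vertex of the polygon. The candidate vertices are
\[
 (p,q,r)=(1/2,1/4,1/4),\qquad (1/2,3/8,1/8),\qquad (2/3,1/6,1/6).
\]
I would compute the three entropy gaps $E_i(s)$ at these points. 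Using $F(x)=1+\int_0^1\log(x-t)\,dt$ and integrand comparisons as in Lemma~\ref{tri:dense-balanced}, I would reduce to the single worst vertex. I expect the worst vertex to be $(2/3,1/6,1/6)$ or $(1/2,3/8,1/8)$, since a small $C/2$ is costly. Next, the series \eqref{tri:F-series} shows that this worst gap increases in $s$, because each coefficient $\sum p_i^{-r}-3\cdot6^r$ is positive (with weight $2/r$ for the half class) by convexity. It then suffices to evaluate at $s=102/5$, adding one more line to \eqref{tri:constant-table} to be certified by \eqref{tri:log-certificate}.

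The main obstacle is that the vertex $(2/3,1/6,1/6)$ has
\[
 \log\bigl(p\,q\,(r/2)\,6^3\bigr)=\log\Bigl(\tfrac{2}{3}\cdot\tfrac16\cdot\tfrac1{12}\cdot216\Bigr)=\log2,
\]
and $(1/2,3/8,1/8)$ gives $\log(216\cdot 3/1024)=\log(81/128)<0$. So the leading entropy term can be negative, and a pure leading-order argument fails. If this happens, I would fall back on a hybrid family through Lemma~\ref{tri:hybrid}, mixing transversal triples with a suitable number of triples from the majority class, as in Lemma~\ref{tri:dense-one-sided}. I would choose that number via \eqref{tri:midpoint-product} so that the transversal endpoint is not the binding one. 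The lower bound would then come from a Jensen estimate at $\mu=(b-1)/2$ together with vertex polynomial checks in $s-102/5$, exactly as in \eqref{tri:dense-base}. Which of the two routes actually closes on the whole region is the point I would settle first, numerically, at the vertices.
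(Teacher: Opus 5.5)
Your first route is sound. The obstacle you flag comes from an arithmetic slip: at $(p,q,r)=(1/2,3/8,1/8)$ one has $216\cdot\frac r2\cdot p\cdot q=216\cdot\frac1{16}\cdot\frac12\cdot\frac38=\frac{81}{32}$, not $\frac{81}{128}$. The three limiting values at the vertices are therefore $\log(27/8)$, $\log(81/32)$ and $\log2$, all positive, and the fallback to Lemma~\ref{tri:hybrid} is not needed.

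At fixed $s$ your $J$ is jointly concave on the triangle, since every argument of $F$ is linear in $p,q,r$. So it suffices to bound the three vertex functions for $s\ge102/5$. The worst vertex is $(2/3,1/6,1/6)$, where $J=F(s/12)+F(2s/3)-2F(s/6)$. Its series coefficients $12^j+(3/2)^j-2\cdot6^j$ are positive, and its value at $s=102/5$ is about $0.60$.

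One step of your reduction needs changing. The integrand comparison does work for $(1/2,1/4,1/4)$: the cubic difference is $\frac{11}{1728}s^3-\frac{11}{288}s^2t-\frac1{24}st^2>0$. This matters, because that vertex's own series has negative leading coefficients, so monotonicity in $s$ is not available there. However, the comparison fails for $(1/2,3/8,1/8)$ near $t=1$ at $s=102/5$, since $0.275\cdot9.2\cdot6.65<0.7\cdot12.6\cdot2.4$. Treat that vertex on its own: its coefficients $16^j+2^j+(8/3)^j-3\cdot6^j$ are positive by your power-mean argument (the mean is $62/9>6$), and its value at $102/5$ is about $0.72$. With $J>1/2$, the passage to the exact count copied from Lemma~\ref{tri:dense-balanced} finishes the proof, at the cost of two certificate lines instead of one.

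The paper reaches the same extremal profile $a:d:c=4:1:1$ through a different parametrisation.
\begin{itemize}
\item It fixes $C=c/b$ rather than $s$, and writes $r=d/c\in[1,3]$ and $t=a/c\in[r+1,4]$.
\item It shows $J$ decreases in $t$, so one may take $t=4$.
\item It reads off the sign of $J_r$ from the concave quadratic $2rC(5-2r)-(r+5)$ and compares the endpoints $r=1,3$.
\item This yields $J\ge G(C)=F(C/2)+F(4C)-2F(C)$, which is exactly your worst-vertex function with $C=s/6$.
\item It evaluates $G$ at the weaker floor $C\ge51/20$, which comes from $\ell\le8c$.
\end{itemize}
Your fixed-$s$ version gets the reduction to vertices for free from concavity and avoids the derivative sign analysis, but you must then control each vertex separately as $s$ varies. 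The paper's fixed-$C$ version needs only one monotone function and one certificate, at the price of a smaller constant (about $0.52$ rather than $0.60$).
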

\begin{proof}
Since $d\le3c$ and $\ell\le8c$, we have $C=c/b\ge51/20$.
Put $r=d/c$, $t=a/c$.  Then $1\le r\le3$ and
$r+1\le t\le4$.  Consider
\[
 J=F(C/2)+F(rC)+F(tC)-3F((t+r+1)C/6).
\]
At fixed $C,r$, this decreases in $t$.  Indeed, with
$N=(t+r+1)C/6$, we have $tC\ge2N$, and
$F'(2N)<F'(N)/2$ follows from the positive series for $F'$.
We may therefore set $t=4$.
At this value the sign of $J_r$ is that of
\[
 P(r)=2rC(5-2r)-(r+5).
\]
This follows by exponentiating twice the derivative and clearing
positive denominators.  The polynomial is concave,
$P(1)=6C-6>0$, and $P(3)=-6C-8<0$.  Hence $J$ first
increases and then decreases, and its minimum is at $r=1$ or $3$.
The first endpoint is smaller.  Indeed the difference is
\[
 J(1)-J(3)=3F(4C/3)-2F(C)-F(3C)<0,
\]
as follows from the integral expression for $F$ and
\[
 (C-u)^2(3C-u)-(4C/3-u)^3
 =C\{17C^2/27-5Cu/3+u^2\}>0
\]
for $C\ge2$, $0\le u\le1$.  The expression in braces
increases in $C$, decreases in $u$, and is at least $5/27$.
It follows that
\[
 J\ge G(C):=F(C/2)+F(4C)-2F(C).
\]
By \eqref{tri:F-series},
$G(C)=\log2-\sum_{j\ge1}(2^j+4^{-j}-2)/(j(j+1)C^j)$
increases with $C>2$.  Thus $J>1/2$ by
\eqref{tri:constant-table}.

Apply the same binomial estimates as in
Lemma~\ref{tri:dense-balanced}.  They give
\[
 \log\frac{B(c,b)\binom ab\binom db}{B(\ell,3b)}
 \ge bJ-\log b-
 \{\log(2\pi)-\tfrac12\log3+F'(\ell/(6b))\}
       -\frac1{4b}-\frac1{12\lceil\ell/2\rceil}
 >\frac b2-\log b-3-\frac1b>0.
\]
For the last step, use $\pi<22/7$, $\ell/(6b)\ge17/5$,
and $b\ge35$.  The parity of $\ell$ is already included in
these estimates.
\end{proof}

\subsection{Completion of the reduction}

\begin{proposition}\label{tri:long-range}
If $b\ge35$, $\mathcal C(b)$ holds, and
$S\in\Fcal(C_{3b})$ satisfies $|S|\ge102b/5$, then
\[
 \mathsf N_0^{3b}(S)\ge B(|S|,3b).
\]
\end{proposition}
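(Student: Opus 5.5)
The proposition should follow by a case analysis on the sizes $a\ge d\ge c$ of the residue classes modulo $3$. Each case has already been settled by one of the lemmas of this section, so the work is to check that the cases exhaust all profiles satisfying \eqref{tri:range}. As observed at the start of the section, we may permute the classes, so we order them as $a\ge d\ge c\ge 0$ with $\ell=a+d+c\ge 102b/5$.

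\textbf{Case $c=0$.} Lemma~\ref{tri:two-class} applies directly.

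\textbf{Case $c\ge1$ and $a\le d+c$ (no majority class).} Split according to $c\le d/4$ or $c\ge d/4$.
\begin{enumerate}
\item If $c\le d/4$, Lemma~\ref{tri:thin-balanced} gives the bound.
\item If $c\ge d/4$, Lemma~\ref{tri:dense-balanced} shows that the count \eqref{tri:rainbow} exceeds $B(\ell,3b)$.
\end{enumerate}

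\textbf{Case $a\ge d+c$ and $a\ge 16\ell/25$.} Lemma~\ref{tri:large-class} settles it. This covers every profile with $a>16(d+c)/9$. Indeed, $a>16(d+c)/9$ is equivalent to $a>16(\ell-a)/9$, that is, $25a>16\ell$.

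\textbf{Case $d+c\le a\le 16(d+c)/9$ and $c\ge1$.} Split on the size of the third class.
\begin{enumerate}
\item If $c\le d/4$, we are exactly under \eqref{tri:thin-majority}. Its three subcases are exhaustive:
\begin{itemize}
\item $T>\binom{u_{\max}}b$ is handled by Lemma~\ref{tri:cap-failure};
\item $T\le\binom{u_{\max}}b$ with $\alpha_b\ge\kappa^bR$ is handled by Lemma~\ref{tri:capped};
\item $\alpha_b<\kappa^bR$ is handled by Lemma~\ref{tri:low-alpha}.
\end{itemize}
\item If $c>d/4$, that is $d<4c$, split further.
\begin{itemize}
\item If $a\ge 4c$, Lemma~\ref{tri:dense-one-sided} applies, since $d\le 4c$.
\item If $a<4c$, then $d+c\le a\le 4c$ and Lemma~\ref{tri:comparable-majority} applies.
\end{itemize}
\end{enumerate}

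The main point to check is that every hypothesis of each lemma is met, in particular the standing range \eqref{tri:range} and the requirement $b\ge35$, which every lemma assumes. The boundary cases are covered because the inequalities overlap: $a=d+c$ falls in both the balanced and the majority families, and $c=d/4$ is covered on both sides. I expect no real obstacle beyond this bookkeeping, since all the analytic content lies in the lemmas already established. Combining the cases gives $\mathsf N_0^{3b}(S)\ge B(|S|,3b)$.
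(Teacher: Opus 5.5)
Your proposal is correct and follows the paper's proof essentially verbatim. It uses the same case split on $c=0$, then $a\le d+c$ (with the split $c\le d/4$ versus $c\ge d/4$), then $a\ge 16\ell/25$, and finally the majority profile divided into the three thin-majority alternatives or the dense-one-sided/comparable-majority pair. The same lemmas are invoked in each case, and your check that $a>16(d+c)/9$ is equivalent to $25a>16\ell$ is exactly the bookkeeping needed for exhaustiveness.
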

\begin{proof}
Let $a\ge d\ge c$ be the three residue-class sizes.
If $c=0$, use Lemma~\ref{tri:two-class}.
If $a\le d+c$, use Lemma~\ref{tri:thin-balanced} when
$c\le d/4$ and Lemma~\ref{tri:dense-balanced} when $c\ge d/4$.
If $a\ge16|S|/25$, use Lemma~\ref{tri:large-class}.
The remaining case satisfies
$d+c<a<16(d+c)/9$.
If $c\le d/4$, the alternatives preceding
Lemma~\ref{tri:one-sided-endpoint} are exhaustive:
Lemma~\ref{tri:cap-failure} treats capacity failure,
Lemma~\ref{tri:capped} treats a feasible inverse with
$\alpha_b\ge\kappa^bR$, and Lemma~\ref{tri:low-alpha}
treats $\alpha_b<\kappa^bR$.
If $c\ge d/4$, use Lemma~\ref{tri:dense-one-sided} when
$a\ge4c$ and Lemma~\ref{tri:comparable-majority} when $a\le4c$.
This covers every profile.
\end{proof}

We now remove the restriction $b\ge35$ imposed at the beginning
of the section.

\begin{lemma}[Tripling reduction]\label{lem:tripling}
For every $b\in\N$, $\mathcal C(b)$ implies $\mathcal C(3b)$.
\end{lemma}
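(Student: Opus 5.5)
The plan is to reduce the tripling statement to Proposition~\ref{tri:long-range}, which already handles $b\ge35$ and $|S|\ge102b/5$, and to cover the complementary situations with the earlier results. Fix $b$, assume $\mathcal C(b)$, and let $n=3b$ and $S\in\Fcal(C_n)$ with $\ell=|S|$.

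\textbf{Small $b$.} First dispose of $b\le34$, i.e.\ $n\le102$, without using the hypothesis. I would invoke Theorem~\ref{thm:kisin} wherever $n=3b$ is a prime power or of the form $p^aq$. The values $n=3b\le102$ with at least three distinct prime factors, or with two primes each to a power above one, are few: for instance $30=2\cdot3\cdot5$, $36=2^2 3^2$, $42$, $60$, $66$, $78$, $84$, $90$, $100$ (not a multiple of $3$), and so on. For every such $n$ that is even, I would use induction on $b$ together with Lemma~\ref{lem:doubling}. Write $n=2m$; then $\mathcal C(m)$ is available by the outer strong induction on the group order, since the tripling lemma is to be used inside that induction. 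Here I expect the statement ``$\mathcal C(b)\Rightarrow\mathcal C(3b)$'' is really meant as a step in an induction where all smaller orders are known. The odd ones up to $102$ that are multiples of $3$ are $3^a$, $3\cdot p$, or $3^2\cdot5=45$, $3^2\cdot7=63$, $3^2\cdot11=99$, $3^3\cdot$, and $75=3\cdot5^2$. Each of these has the form $p^aq$, so Kisin covers them. The only odd multiple of $3$ below $102$ with three distinct primes is $105>102$, so all odd cases are Kisin. If the lemma must hold literally from $\mathcal C(b)$ alone, I would instead check that every even $n=3b\le102$ not covered by Kisin arises as $2m$ with $m$ covered by Kisin, and then apply Lemma~\ref{lem:doubling}. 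For example $30=2\cdot15$, $42=2\cdot21$, $60=2\cdot30$, $84=2\cdot42$, $90=2\cdot45$, $36=2\cdot18$; iterate if necessary.

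\textbf{Large $b$, short sequences.} Now let $b\ge35$, so $n\ge105$. If $\ell\ge102b/5=34n/5$, Proposition~\ref{tri:long-range} applies directly. Otherwise $\ell<34n/5$, so $\ell\le\lfloor34n/5\rfloor$. This is exactly the range of Lemma~\ref{lem:short-range}, provided $n\ge180$ or $n\in\{105,165\}$.

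\textbf{Main obstacle.} The remaining gap is $n=3b$ with $35\le b<60$ and $n\notin\{105,165\}$, that is, $n\in\{108,111,\dots,177\}$. For these $n$ I would again avoid the short range. If $n$ is even, write $n=2m$ and use Lemma~\ref{lem:doubling} with $\mathcal C(m)$, obtained either from Kisin or recursively by this same reduction at smaller orders. If $n$ is odd and of Kisin form, apply Theorem~\ref{thm:kisin}. The only odd multiples of $3$ in $[105,177]$ with three distinct primes are exactly $105$ and $165$, which explains why those two values appear in Lemma~\ref{lem:short-range}. So the main difficulty is bookkeeping. One must verify that every exceptional $n$ is reached by Kisin or doubling from an order already settled, and the cleanest approach is to phrase the whole lemma inside the strong induction on $n$, where $\mathcal C(n/2)$ is available.
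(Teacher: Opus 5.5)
Your proposal is correct and follows the paper's proof. For $3b\ge180$ or $3b\in\{105,165\}$ you combine Lemma~\ref{lem:short-range} with Proposition~\ref{tri:long-range}. For every other multiple of $3$ below $180$, the odd part is a prime power or of the form $p^aq$, so Theorem~\ref{thm:kisin} plus repeated Lemma~\ref{lem:doubling} gives $\mathcal C(3b)$ without using $\mathcal C(b)$; your appeal to an outer strong induction is unnecessary and would change the statement. The only check you leave implicit in the range $[108,177]$ is that two odd primes both with exponent at least $2$ would force $n\ge3^2\cdot5^2=225$.
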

\begin{proof}
For $b\ge60$, Lemma~\ref{lem:short-range} proves the assertion
for $|S|\le\lfloor102b/5\rfloor$, and
Proposition~\ref{tri:long-range} proves it for
$|S|\ge\lceil102b/5\rceil$.  These ranges cover every integer
length.  The same argument applies to $3b=105$ and $165$, the
two exceptional moduli included in Lemma~\ref{lem:short-range}.

For the other multiples of $3$ below $180$, remove all powers
of $2$.  The resulting odd integer is either a prime power or
of the form $p^\alpha q$, with $p,q$ distinct primes,
unless it is $105$ or $165$.  Indeed three distinct odd prime
divisors require at least $3\cdot5\cdot7=105$, and the only
other such multiple of $3$ below $180$ is $3\cdot5\cdot11=165$;
two distinct odd prime divisors occurring twice require at
least $3^2\cdot5^2=225$.  Theorem~\ref{thm:kisin} handles
the stated forms, and Lemma~\ref{lem:doubling} restores the
removed powers of $2$.  The proof is complete.
\end{proof}

\section{Proof of Theorem~\ref{thm:main}}
\label{sec:pairing}

The reductions of the preceding sections apply when a quotient
sequence has a sufficiently frequent term. We now consider the
complementary case. The argument combines a simultaneous choice of
pairs with the representation bound in
Lemma~\ref{lem:representations}.

\begin{proposition}
\label{prop:pairing}
Let $n\ge385$ be an integer all of whose prime divisors are at
least $5$, and let $S\in\mathcal F(C_n)$ have length
$\ell\ge34n/5$. Suppose that
\[
 \mathsf h(S)\le\lfloor\ell/2\rfloor
 \quad\text{and}\quad
 \mathsf h(\pi_q(S))\le
       \left(\frac12+\frac{41}{100q}\right)\ell
 \quad\text{for every prime }q\mid n.
\]
Then $\mathsf N_0^n(S)\ge B(\ell,n)$.
\end{proposition}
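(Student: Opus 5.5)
We plan to mirror the doubling argument of Lemma~\ref{lem:doubling}, this time using pairs whose individual sums carry no restriction, and to recover the loss through extra representations. Choose a uniform maximum matching of the $\ell$ positions into $t=\lfloor\ell/2\rfloor$ pairs. For a fixed $n$-set $I$ of positions, consider the event that $I$ meets exactly $k$ pairs fully and $n-2k$ pairs in a single element. The plan is to reverse the order of selection. We first pick a family $\mathcal P$ of $n$ disjoint pairs, which form $2n$ positions. We then count the zero-sum $n$-subsets of these $2n$ positions that take exactly one element from each pair. Such a subset corresponds to a choice function, and its sum is
\[
 \sum_i x_i+\sum_i\epsilon_i(y_i-x_i),\qquad \epsilon_i\in\{0,1\}.
\]
So the count is $\mathsf N_g(U)$, where $U$ is the sequence of differences $y_i-x_i$ and $g=-\sum x_i$. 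Each $n$-set of positions is a transversal of the same number of pair-families. Double counting therefore gives
\[
 \mathsf N_0^n(S)\binom{\ell-n}{n}\ge\sum_{\mathcal P}\mathsf N_{g(\mathcal P)}(U_{\mathcal P}).
\]
It then suffices to show that, on average over suitably restricted families, $\mathsf N_g(U)$ is large compared to $B(\ell,n)\binom{\ell-n}{n}\big/\binom{\ell}{2n}\cdot(\text{normalisation})$. A computation analogous to Lemma~\ref{red:binomial-estimates}, using $\ell\ge34n/5$, should show that an average of about $2+\varepsilon$ representations per family with $g\in\Sigma^*(U)$ beats $B(\ell,n)$, or rather an average at least $2$, with the deficit of $\mathsf N_0^n$ relative to $B$ covered exponentially in $n$.

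The key inputs are Lemma~\ref{lem:representations} and Kneser's theorem. The steps are as follows.
\begin{enumerate}
\item Restrict to families $\mathcal P$ in which every difference $y_i-x_i$ is nonzero, that is, no pair is a repeated value. The hypothesis $\mathsf h(S)\le\lfloor\ell/2\rfloor$ makes a positive proportion of families qualify. Lemma~\ref{lem:majority}-type balancing should show that the loss is at most what the binomial margin allows.
\item For such $U\in\mathcal F(C_n^\bullet)$ with $|U|=n$, Lemma~\ref{lem:representations} gives $\mathsf N_g(U)\ge2$ whenever $g\in\Sigma^*(U)$. It gives $\ge3$ if some difference has order $<n$.
\item Show that, for most families, $\Sigma^*(U)=C_n$, so that $g$ is automatically attainable. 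View $\Sigma^*(U)$ as the sumset $\sum_i\{0,u_i\}$. Kneser's bound \eqref{eq:kneser} with stabiliser $H$ forces, if the sumset is a proper subgroup-periodic set, that many $u_i$ lie in $H$. Hence many terms of $S$ are paired within one coset of some subgroup $qG$ with $q\mid n$ prime. The hypothesis $\mathsf h(\pi_q(S))\le(1/2+41/(100q))\ell$ controls the probability of that: a uniform pair lands in one class of $\pi_q$ with probability roughly $\sum_j\theta_j^2$, which is bounded away from $1$. Since $q\ge5$, a Chernoff/hypergeometric bound makes the event that more than $n-|H|\cdot(\text{index})$ differences fall in $H$ exponentially rare.
\end{enumerate}

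Finally, compare the averaged bound $\ge2$ (plus the bonus $3$ whenever a difference is a non-generator, which happens with constant probability because $n$ is composite with $\ge2$ primes, or else Kisin applies) against the ratio $B(\ell,n)\binom{\ell-n}{n}/\binom{\ell}{2n}\cdots$. Here the binomial estimates of Section~\ref{red:estimates} and $n\ge385$ should give the margin.

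The main obstacle is step 3 together with the final numerics. One must show that the extremal configuration, two balanced values, is exactly where the count $2$ per family is tight, while every unbalanced profile allowed by the hypotheses gains a multiplicative factor. I would first try to make this quantitative by splitting according to whether $\Sigma^*(U)=C_n$. In the degenerate case, Kneser yields $|\{i:u_i\in H\}|\ge n-n/|H|\cdot(\dots)$, and I would bound its probability using the $41/(100q)$ margin, which is presumably why that constant and the threshold $n\ge385$ appear.
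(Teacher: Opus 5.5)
There is a genuine gap in your step 1, and more generally in averaging over all families of disjoint pairs. Test the plan on $S=a^{N}b^{N}$ with $\ell=2N$ and $a-b$ of order $n$.

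This sequence satisfies both hypotheses. Its zero-sum $n$-subsequences are exactly the $2\binom Nn$ choices inside one value class, so $\mathsf N_0^n(S)=B(\ell,n)$ and there is no slack at all. A fixed zero-sum $n$-set, say $n$ copies of $a$, is a transversal of $n!\binom{\ell-n}{n}$ families of $n$ disjoint pairs. Only $n!\binom Nn$ of those families have all differences nonzero, because every partner must be a $b$. So after your restriction, the double count yields at most
\[
 2\binom Nn\cdot\frac{\binom Nn}{\binom{2N-n}{n}}\le 2\binom Nn\left(\frac{N}{2N-n}\right)^{n}\le B(\ell,n)\,(0.59)^n ,
\]
using $N\ge 17n/5$.

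Your claim that $\mathsf h(S)\le\lfloor\ell/2\rfloor$ makes ``a positive proportion of families qualify'' holds for a single pair but fails for $n$-tuples of pairs; here the proportion is roughly $2^{-n}$. No binomial margin can absorb this. The example is an equality case, and in general the required average per family, $2^nB(\ell,n)/\binom{\ell}{n}$, tends to $2$ as $\ell\to\infty$. The same loss occurs if you fix a uniform matching and discard its equal pairs: about half its pairs are equal here, leaving only about $2\binom{N/2}{n}$.

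The missing idea is to fix \emph{one} matching and count only its transversals. Write $\ell=2N+\varepsilon$ and reserve a singleton if $\varepsilon=1$. Since $\mathsf h(S)\le N$, every pair $(g,g)$ can be switched with some pair $(x,y)$ avoiding $g$ into $(g,x),(g,y)$. This removes equal pairs without increasing the number of pairs internal to any fibre of $\pi_q$. Combine this with a concentration argument for a uniform matching, which is where $\mathsf h(\pi_q(S))\le(\frac12+\frac{41}{100q})\ell$ enters. The result is a perfect matching of $2N$ positions with all differences nonzero and at most $7N/10$ of them in each $qC_n$.

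Transversals of distinct $n$-subsets of this matching are distinct positional subsequences. An average of $2$ zero-sum transversals per selection therefore gives exactly $2\binom Nn$. The $(n-1)$-selections together with the singleton, via part (2) of Lemma~\ref{lem:representations}, give the remaining $\varepsilon\binom N{n-1}$. The total is exactly $B(\ell,n)$.

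Because this count is exact, the Kneser-failure event $\mathcal B$ cannot be dismissed as ``exponentially rare''; it must be paid for exactly. The paper shows $\mathbb P(\mathcal B)\le\rho/3$, where $\rho$ is the density of non-generating differences in the fixed matching. The event $\mathcal A$ of selecting a non-generator has $\mathbb P(\mathcal A)\ge\rho$ and yields one extra representation. Hence $k+\mathbb P(\mathcal A)-(k+1)\mathbb P(\mathcal B)\ge k$.

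Your assertion that a non-generator appears ``with constant probability'' is false: in the extremal example with all pairs $(a,b)$ it never does. That is exactly why the failure bound must be proportional to $\rho$ rather than merely small.
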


\begin{proof}
Write $\ell=2N+\varepsilon$, where $\varepsilon\in\{0,1\}$.
If $\varepsilon=1$, reserve one position of $S$ as a singleton.
We partition the remaining positions into $N$ pairs. Let $s$ be
the number of distinct prime divisors of $n$. We shall use
\begin{equation}
 s\le n/5\qquad\text{and}\qquad N\ge3n.
 \label{eq:pair-size}
\end{equation}
Indeed, $n\ge5^s\ge5s$, and the second inequality follows from
$\ell\ge34n/5$ and $n\ge385$.

\smallskip
\noindent\emph{Choice of pairs.}
Start with a uniform perfect matching of the $2N$ positions.
For each prime $q\mid n$, let $I_q$ be the number of pairs whose
terms have the same image under $\pi_q$. If the multiplicities in
the remaining quotient sequence are $c_1,\ldots,c_q$ and
$c=\max c_j$, then
\[
 \mathbb E I_q
 =\frac{\sum_{j=1}^q\binom{c_j}{2}}{2N-1}
 \le\frac{N(c-1)}{2N-1}\le\frac c2<\frac{59N}{100}.
\]
The strict inequality follows from
$c\le(291/500)\ell$ and
$(291/500)(1+1/(2N))<59/100$.

Expose the pairs successively, choosing the partner of the first
unmatched position at each step. For a fixed $q$, consider the
conditional expectation of $I_q$. If $x$ and $y$ are two possible
partners of the next position $a$, the switch
\[
 (a,x),(y,z)\longmapsto(a,y),(x,z)
\]
bijects the two completion spaces and changes $I_q$ by at most two.
Hence each conditional range of the exposure martingale has length
at most two. Hoeffding's bounded-interval estimate \cite{Hoeffding1963}, applied
conditionally, gives a moment bound $\exp(t^2/2)$ for each centered
increment. Multiplication over at most $N$ increments and Markov's
inequality therefore give
\[
 \mathbb P(I_q-\mathbb E I_q\ge u)
       \le\exp\left(-\frac{u^2}{2N}\right).
\]
Consequently
\begin{equation}
 \mathbb P\bigl(I_q>7N/10\text{ for some prime }q\mid n\bigr)
 \le s e^{-121N/20000}
 \le\frac n5e^{-363n/20000}<1.
 \label{eq:matching-exists}
\end{equation}
For the last inequality, the logarithm of the middle expression is
decreasing for $n\ge385$. At $n=385$ its negative exponent exceeds
$6$, whereas $n/5=77$ and
$e^6>1+6+18+36+54=115$.

Fix a matching for which $I_q\le7N/10$ for every prime $q\mid n$.
We next remove pairs with equal terms without increasing any $I_q$.
If $(g,g)$ is one such pair, some other pair $(x,y)$ has both terms
different from $g$. Otherwise the $N$ pairs would contain at least
$N+1$ copies of $g$, contrary to $\mathsf h(S)\le N$.
Replace these pairs by $(g,x)$ and $(g,y)$. No new equal pair is
created, and at least one is removed.

For any quotient of $C_n$, the number of internal pairs does not
increase under this switch. If both, exactly one, or neither of
$x,y$ have the image of $g$, the internal-pair counts change as
$2\to2$, $1\to1$, or $1\text{ or }2\to0$, respectively.
Repeating the switch gives a fixed matching with differences
$d_1,\ldots,d_N$ satisfying
\begin{equation}
 d_i\ne0\quad(i\in[1,N]),\qquad
 |\{i:d_i\in qC_n\}|\le7N/10
       \quad(q\mid n\text{ prime}).
 \label{eq:fixed-matching}
\end{equation}
Only the existence of this matching is used in what follows.

\smallskip
\noindent\emph{Selections whose transversal sums do not fill the group.}
Let $r\in\{n-1,n\}$, and select $r$ pairs uniformly from the fixed
matching. After relabeling their differences, the set of transversal
sums is a translate of
\[
 T=\{0,d_1\}+\cdots+\{0,d_r\}.
\]
A transversal chooses exactly one position from each selected pair.
Suppose that $T\ne C_n$, and let $H$ be its stabilizer. If $H=\{0\}$,
Kneser's theorem gives $|T|\ge r+1\ge n$, a contradiction.
Thus $H$ is a proper nontrivial subgroup. Put $d=(C_n:H)$ and
let $u$ count selected differences outside $H$. Since $|H|\ge5$,
we have $d\le n/5$. Applying Kneser's theorem in the quotient yields
$|T/H|\ge u+1$. Properness gives $u\le d-2$, so more than $4r/5$
selected differences lie in $H$:
\[
 r-u\ge r-d+2\ge r-n/5+2>4r/5.
\]
Choose a maximal subgroup containing $H$. It is $qC_n$ for some
prime $q\mid n$.

Let $\mathcal B$ be the event that there exists a prime $q\mid n$
such that more than $4r/5$ selected differences belong to $qC_n$.
We have proved that every selection outside $\mathcal B$ has all
elements of $C_n$ as transversal sums.

\smallskip
\noindent\emph{A probability bound proportional to the nongenerator density.}
Let $\rho$ be the fraction of the $N$ differences in
\eqref{eq:fixed-matching} which do not generate $C_n$, and put
\[
 \theta_q=\frac{|\{i:d_i\in qC_n\}|}{N}.
\]
Then $\theta_q\le\rho$ and $\theta_q\le7/10$.
Let $X_q$ be the number of selected differences in $qC_n$.
For $z>1$, its moment generating polynomial satisfies
\begin{equation}
 \mathbb E z^{X_q}\le(1-\theta_q+\theta_q z)^r.
 \label{eq:hypergeometric}
\end{equation}
Indeed, the left side is the $r$th elementary symmetric mean of
$N$ numbers, with $N\theta_q$ equal to $z$ and the others equal
to $1$. The symmetric-mean inequality bounds this by the $r$th
power of their arithmetic mean. This inequality also follows by
repeatedly averaging two variables: with their sum fixed, the
elementary symmetric polynomial has the form $A+B(x+y)+Cxy$
with $C\ge0$, and therefore does not decrease upon averaging.

For $0<\theta_q<4/5$, optimization in Markov's inequality gives
\[
 \mathbb P(X_q>4r/5)
 \le e^{-rD(4/5\,\Vert\,\theta_q)},
 \quad
 D(x\,\Vert\,v)=x\log\frac xv+(1-x)\log\frac{1-x}{1-v}.
\]
For $0<v\le7/10$ and $r\ge n-1\ge384$,
\[
 \frac{d}{dv}\log\left(\frac{e^{-rD(4/5\,\Vert\,v)}}v\right)
 =\frac{4r/5-1-(r-1)v}{v(1-v)}>0.
\]
Moreover $D(x\,\Vert\,v)\ge2(x-v)^2$, since as a function of
$x$ it has value and derivative zero at $x=v$ and second derivative
$1/[x(1-x)]\ge4$. Hence
$D(4/5\,\Vert\,7/10)\ge1/50$, and
\[
 \mathbb P(X_q>4r/5)\le\frac{10}{7}\theta_q e^{-r/50}.
\]
The probability is zero if $\theta_q=0$. Summing over the prime
divisors and using \eqref{eq:pair-size}, we obtain, for $\rho>0$,
\begin{equation}
 \mathbb P(\mathcal B)
 \le\frac{10s}{7}\rho e^{-r/50}
 \le\frac{2n}{7}\rho e^{-(n-1)/50}<\frac\rho3.
 \label{eq:proportional-failure}
\end{equation}
To verify the last inequality, note that
$\log(6n/7)-(n-1)/50$ decreases for $n\ge385$.
At $n=385$, $(n-1)/50>7$, while the exponential series through
degree six gives $e^7>330=6n/7$. If $\rho=0$, then
$\mathcal B$ is empty.

\smallskip
\noindent\emph{Compensation and the final count.}
Let $\mathcal A$ be the event that the selected pairs include a
difference which does not generate $C_n$. Then
$\mathbb P(\mathcal A)\ge\rho$, since the first pair in a uniform
ordered sample already has this probability.

Put $k=2$ if $r=n$ and $k=1$ if $r=n-1$. For each selection outside
$\mathcal B$, every target is attained. By
Lemma~\ref{lem:representations}, there are at least $k$ transversals
with any prescribed sum, and at least $k+1$ on $\mathcal A$.
The target may depend on the selected pairs: subtracting the sum
of one fixed endpoint in each pair identifies transversal sums
with subset sums of the differences. Thus the expected number of
transversals with the required sum is at least
\begin{equation}
 k\mathbb P(\mathcal B^c)+\mathbb P(\mathcal A\cap\mathcal B^c)
 \ge k+\mathbb P(\mathcal A)-(k+1)\mathbb P(\mathcal B)
 \ge k.
 \label{eq:surplus-compensation}
\end{equation}
The last conclusion is immediate as well when $\rho=0$.

For $r=n$, prescribe sum zero. If $\varepsilon=1$, also use
$r=n-1$, prescribing the negative of the reserved term, and adjoin
that term. A positional transversal determines its selected pairs,
so different selections do not give the same positional subsequence.
The subsequences using the singleton are disjoint from those
omitting it. Summing \eqref{eq:surplus-compensation} therefore gives
\[
 \mathsf N_0^n(S)\ge
 2\binom Nn+\varepsilon\binom N{n-1}
 =\binom Nn+\binom{N+\varepsilon}n
 =B(\ell,n).
\]
This proves the proposition.
\end{proof}

\bigskip

Now we are in a position to prove Theorem~\ref{thm:main}.

\begin{proof}[Proof of Theorem~\ref{thm:main}]
We prove $\mathcal C(n)$ by strong induction on $n$. The assertion
for $n=1$ is immediate. We may use Theorem~\ref{thm:kisin} whenever
$n$ is a prime power or has the form $p^a q$.
If $n$ is even, apply Lemma~\ref{lem:doubling} and the induction
hypothesis at $n/2$. If $n$ is odd and divisible by $3$, apply
Lemma~\ref{lem:tripling} and the induction hypothesis at $n/3$.

It remains to consider an integer $n$ outside Kisin's cases all of
whose prime divisors are at least $5$. If it has at least three
distinct prime divisors, then $n\ge5\cdot7\cdot11=385$. If it has
exactly two, both exponents are at least two, and
$n\ge5^2 7^2=1225$. Thus $n\ge385$ in every remaining case.
For each prime $q\mid n$ we also have $n/q\ge25$: an integer below
$25$ with no prime divisor below $5$ is $1$ or a prime, which would
place $n$ in one of Kisin's cases.

Let $S\in\mathcal F(C_n)$ and put $\ell=|S|$. If
$\ell\le\lfloor34n/5\rfloor$, the assertion follows from
Lemma~\ref{lem:short-range}. Hence we may suppose
$\ell\ge34n/5$. If $\mathsf h(S)\ge\lfloor\ell/2\rfloor$, apply
Lemma~\ref{lem:majority}.

Suppose next that some prime $q\mid n$ satisfies
\[
 \mathsf h(\pi_q(S))>
 \left(\frac12+\frac{41}{100q}\right)\ell.
\]
Set $b=n/q$. Then $q\ge5$, $b\ge25$, and $\mathcal C(b)$ holds
by induction. Lemma~\ref{lem:large-class} yields the required bound.
Otherwise all hypotheses of Proposition~\ref{prop:pairing} hold,
and that proposition gives $\mathsf N_0^n(S)\ge B(\ell,n)$.
The completes the proof.
\end{proof}

\section*{Acknowledgements}

This work was supported by the National Natural Science Foundation
of China (grant nos. 12371335 and 12271520).

\section*{Declaration of generative AI and AI-assisted technologies}

The authors used a large language model during the preparation of this manuscript
for assistance with mathematical exploration, checking arguments,
organization and language editing. The authors are responsible for
the mathematical statements, proofs, references and final text.

\end{document}